\documentclass[11pt]{article}

\usepackage{epsfig,amsmath,amssymb,graphics,verbatim,amsfonts,subfigure,psfrag,amsthm,bm}
\usepackage[a4paper,margin=1in]{geometry}
\usepackage{color}
\usepackage{tikz}
\usetikzlibrary{matrix,arrows,calc}
\usetikzlibrary{decorations.pathmorphing}
\usetikzlibrary{decorations.markings}
\usepgflibrary{fpu}
\usepackage{dsfont} 

\usepackage{enumitem}
\setlist{parsep=1pt} 

\usepackage[percent]{overpic}

\numberwithin{equation}{section}

\topmargin=-15mm

\let\oldbibliography\thebibliography
\renewcommand{\thebibliography}[1]{%
  \oldbibliography{#1}%
  \setlength{\itemsep}{0.5mm}%
}



\newtheorem{thm}{Theorem}[section]
\newtheorem{cor}[thm]{Corollary}
\newtheorem{prop}[thm]{Proposition}
\newtheorem{lem}[thm]{Lemma}

\theoremstyle{definition}


\newenvironment{ex}
  {\pushQED{\qed}\examplex }
  {\popQED\endexamplex}

\newenvironment{rmk}
  {\pushQED{\qed}\remarkx}
  {\popQED\endremarkx}


\makeatletter

\def\captionheadfont@{\scshape}
\def\captionfont@{\small}
\long\def\@makecaption#1#2{%
  \setbox\@tempboxa\vbox{\color@setgroup
    \advance\hsize-3pc\noindent
    \captionfont@\captionheadfont@#1\@xp\@ifnotempty\@xp
        {\@cdr#2\@nil}{.\captionfont@\upshape\enspace#2}%
    \unskip\kern-3pc\par
    \global\setbox\@ne\lastbox\color@endgroup}%
  \ifhbox\@ne 
    \setbox\@ne\hbox{\unhbox\@ne\unskip\unskip\unpenalty\unkern}%
  \fi
  \ifdim\wd\@tempboxa=\z@ 
    \setbox\@ne\hbox to\columnwidth{\hss\kern-3pc\box\@ne\hss}%
  \else 
    \setbox\@ne\vbox{\unvbox\@tempboxa\parskip\z@skip
        \noindent\unhbox\@ne\advance\hsize-3pc\par}%
\fi
  \ifnum\@tempcnta<64 
    \addvspace\abovecaptionskip
    \moveright 1.5pc\box\@ne
  \else 
    \moveright 1.5pc\box\@ne
    \nobreak
    \vskip\belowcaptionskip
  \fi
\relax
}

\makeatother

\def\R{\mathbb{R}}
\def\Q{\mathbb{Q}}
\def\T{\mathbb{T}}
\def\P{\mathbb{P}}
\def\E{\mathbb{E}}

\def\N{\mathbb{N}}
\def\Z{\mathbb{Z}}
\def\I{\infty}
\def\Id{{\textnormal{Id}}}


\def\txtd{{\textnormal{d}}}
\def\txte{{\textnormal{e}}}
\def\txti{{\textnormal{i}}}

\def\txtD{{\textnormal{D}}}

\DeclareMathOperator{\e}{e}             

\newcommand{\be}{\begin{equation}}
\newcommand{\ee}{\end{equation}}
\newcommand{\bea}{\begin{eqnarray}}
\newcommand{\eea}{\end{eqnarray}}
\newcommand{\beann}{\begin{eqnarray*}}
\newcommand{\eeann}{\end{eqnarray*}}
\newcommand{\benn}{\begin{equation*}}
\newcommand{\eenn}{\end{equation*}}

\DeclareMathSymbol{\leqsymb}{\mathalpha}{AMSa}{"36}
\def\leqs{\mathrel\leqsymb}
\DeclareMathSymbol{\geqsymb}{\mathalpha}{AMSa}{"3E}
\def\geqs{\mathrel\geqsymb}
\def\ra{\rightarrow}
\def\I{\infty}

\DeclareMathOperator{\Var}{Var}

\newcommand{\cA}{{\mathcal A}}  
\newcommand{\cC}{{\mathcal C}}  
\newcommand{\cD}{{\mathcal D}}  
\newcommand{\cF}{{\mathcal F}}  
\newcommand{\cG}{{\mathcal G}}  
\newcommand{\cI}{{\mathcal I}}  
\newcommand{\cO}{{\mathcal O}}  
\newcommand{\cQ}{{\mathcal Q}}  
\newcommand{\cR}{{\mathcal R}}  
\newcommand{\cS}{{\mathcal S}}  
\newcommand{\cT}{{\mathcal T}}  
\newcommand{\cU}{{\mathcal U}}  
\newcommand{\cW}{{\mathcal W}}  

\newcommand{\fs}{{\mathfrak s}}  
\newcommand{\fraks}{{\mathfrak s}}  
\newcommand{\fT}{{\mathfrak T}}  
\newcommand{\Hs}{\ensuremath{\mathfrak{H}}}
\newcommand{\Xs}{\ensuremath{\mathfrak{X}}}

\newcommand{\bz}{{\bar z}}  
\newcommand{\bx}{{\bar x}}  
\newcommand{\bt}{{\bar t}}  


\def\fThat{\widehat{\mathfrak T}}

\def\rhocrit{\rho_{\rm{c}}}
\def\figref#1{Figure~\ref{#1}}

\newcommand{\decapprox}[1]{{}\approx #1}


\DeclareMathSymbol{\leqsymb}{\mathalpha}{AMSa}{"36}
\renewcommand{\leq}{\mathrel\leqsymb}
\renewcommand{\leqs}{\mathrel\leqsymb}
\DeclareMathSymbol{\geqsymb}{\mathalpha}{AMSa}{"3E}
\renewcommand{\geq}{\mathrel\geqsymb}
\renewcommand{\geqs}{\mathrel\geqsymb}

\begin{document}
\title{Model Spaces of Regularity Structures \\
for Space-Fractional SPDEs}
\author{Nils Berglund and Christian Kuehn}
\date{February 25, 2017}   

\maketitle

\begin{abstract}
We study model spaces, in the sense of Hairer, for stochastic partial
differential equations involving the fractional Laplacian. We prove that the
fractional Laplacian is a singular kernel suitable to apply the theory of
regularity structures. Our main contribution is to study the dependence of the
model space for a regularity structure on the three-parameter problem
involving the spatial dimension, the polynomial order of the nonlinearity, and
the exponent of the fractional Laplacian. The goal is to investigate the growth
of the model space under parameter variation. In particular, we prove several
results in the approaching subcriticality limit leading to universal growth
exponents of the regularity structure. A key role is played by the viewpoint
that model spaces can be identified with families of rooted trees. Our
proofs are based upon a geometrical construction similar to Newton polygons for
classical Taylor series and various combinatorial arguments. We also present
several explicit examples listing all elements with negative homogeneity by
implementing a new symbolic software package to work with regularity structures.
We use this package to illustrate our analytical results and to obtain new
conjectures regarding coarse-grained network measures for model spaces. 
\end{abstract}

\leftline{\small{\bf Submitted version}}
\leftline{\small 2010 {\it Mathematical Subject Classification.\/} 
60H15, 
35R11 (primary), 
05C05,   
82B20 (secondary). 
}
\noindent{\small{\it Keywords and phrases.\/}
Stochastic partial differential equations, 
regularity structures, 
fractional Laplacian,
nonhomeomorphic rooted trees,
subcriticality boundary.
}  

\section{Introduction}
 
Our main starting point in this work are stochastic partial differential equations 
(SPDEs) of the form
\be
\label{eq:AC}
\partial_t u = \Delta^{\rho/2} u+f(u)+\xi\;,\qquad
\partial_t:=\frac{\partial}{\partial t}\;,
\ee
where $\Delta^{\rho/2}:=-(-\Delta)^{\rho/2}$ is the \emph{fractional Laplacian} for 
$\rho\in(0,2]$, $f$ is a polynomial defined in \eqref{eq:poly}, $\xi=\xi(t,x)$ is the 
noise (which we shall often take as space-time white noise as discussed below), $u=u(t,x)$ 
and $(t,x)\in [0,+\I) \times \T^d$, where $\T^d$ is the unit torus in $\R^d$ so that we 
work with periodic boundary conditions. We also write $f$ explicitly as
\be
\label{eq:poly}
f(u):=\sum_{j=0}^Na_j u^j\;,
\ee
where the degree $N$ will be restricted, depending upon the dimension $d$. The
technical development of this paper starts in Section~\ref{sec:regstruct}. Here
we outline the origins of the questions of this work and provide a formal
overview of our results.\medskip

There are several motivations to study~\eqref{eq:AC}. First of all, the case
$\alpha=2$, corresponding to the classical Laplacian, has been studied in great
detail for the deterministic system ($\xi\equiv 0$), under various names for
different polynomial nonlinearities such as the Allen--Cahn
equation~\cite{AllenCahn}, the Nagumo equation~\cite{Nagumo}, the
Fisher--Kolmogorov--Petrowski--Piscounov
equation~\cite{Fisher,KolmogorovPetrovskiiPiscounov}, the Ginzburg--Landau
equation~\cite{CrossHohenberg} or as normal forms (or modulation/amplitude
equations) for bifurcations of PDEs~\cite{Hoyle}. Recently, there has been
considerable interest to extend the scope to \emph{either} the stochastic case
($\xi\not\equiv 0$) \emph{or} to include other differentiable operators, such as
the fractional Laplacian, which is a nonlocal operator;
see~\cite{ChenKimSong,AchleitnerKuehn1} or Section~\ref{sec:kernels} for one
precise definition of the fractional Laplacian. Hence, it is a natural
theoretical question, how stochastic terms and nonlocal operators can be
combined. For very rough stochastic driving terms $\xi$ already existence and
regularity questions for~\eqref{eq:AC} are nontrivial. Indeed, pursuing the
classical route to re-write~\eqref{eq:AC} in a mild
formulation~\cite{DaPratoZabczyk} using the semigroup $t\mapsto
\smash{\txte^{t\Delta^{\rho/2}}}$ can lead to a fixed-point problem, where the
products in the polynomial~\eqref{eq:poly} are not well-defined as $u$ may only
have the regularity of a generalized function (or distribution). This issue not
only appears for the classical stochastic Allen--Cahn equation ($\alpha=2$,
$\xi$ space-time white noise, $d\in\{2,3\}$, $N=3$) but also in many other SPDEs
such as the Kardar--Parisi--Zhang equation~\cite{KardarParisiZhang,Hairer2}, the
$\Phi^4_3$-model~\cite{Hairer3}, and several other SPDEs. To overcome this
problem for different classes of SPDEs, Martin Hairer developed a unified theory
of \emph{regularity structures} in combination with a renormalization
scheme~\cite{Hairer1}. The basic idea is to consider an abstract algebraic
structure, the regularity structure, in which we may solve a suitable
fixed-point problem. Furthermore, this solution procedure is compatible with
taking limits of a smoothed version of the original problem and reconstructing a
\lq\lq physical\rq\rq\ solution from objects in the regularity
structure~\cite{Hairer1}. A key ingredient of the fixed-point argument is that
under a parabolic space-time scaling, the heat semigroup associated to the
classical Laplacian is regularizing of order two, i.e., we essentially  gain two
derivatives after a convolution of a function/distribution with the heat kernel.
This smoothing effect compensates the rough driving by $\xi$ for certain,
so-called subcritical, nonlinearities. A related theory achieving the definition
of low-regularity products of functions uses paracontrolled
distributions~\cite{Bony,Gubinelli}, developed by Gubinelli and co-authors, and
has also been applied to several classes of
SPDEs~\cite{GubinelliTindel,GubinelliImkellerPerkowski}. Also in this theory,
one makes use of the smoothing properties of differential operators in the SPDE.

\begin{rmk}
In this paper, we focus on the theory of regularity structures. However, since
we study a space of functions, the space of modelled distributions, which can be 
used to expand the solution of the SPDE~\eqref{eq:AC} as a series, it is expected 
that our results could be re-interpreted in the context of paracontrolled 
calculus in future work.
\end{rmk}

From the theory of fractional differential Laplace operators, or more generally
Riesz--Feller operators, and associated ideas developed in the context of L\'evy
processes and anomalous diffusion, it is known that the fractional Laplacian has
\emph{less} smoothing properties than the classical Laplacian. Hence, it is
natural to ask, how this restricts the class of SPDEs tractable via regularity
structures.\medskip

A second key motivation to study~\eqref{eq:AC} arose out of recent
work~\cite{BerglundKuehn} by the authors of this paper, where the key object of
study was the FitzHugh--Nagumo SPDE
\begin{align}
\nonumber
\partial_t u &= \Delta u + u(1-u)(u-p)-v+\xi\;,\\
\partial_t v &= g(u,v)\;, 
\label{eq:FHN}
\end{align}
where $p\in[0,\frac12]$ is a parameter and $g(u,v):\R\times \R^m\ra \R^m$ 
is linear. The main problems to apply the theory of regularity structures
to~\eqref{eq:FHN} are to deal with multiple components and especially the
missing spatial regularization properties as the $v$-components do not contain a
spatial differential operator. In~\cite{BerglundKuehn}, this problem is overcome
by introducing a spatially non-smoothing operator for the $v$-components and
then working through the required Schauder estimates for this operator again by
modifying the relevant parts of~\cite{Hairer1}. One may argue that it could be
more convenient to try to follow the classical strategy developed for hyperbolic
conservation laws~\cite{Dafermos} and introduce a viscous regularization 
\begin{align}
\nonumber
\partial_t u &= \Delta u + u(1-u)(u-p)-v+\xi\;,\\
\partial_t v &= \delta_v\Delta v + g(u,v)\;, 
\label{eq:FHNviscous}
\end{align}
prove existence and regularity for~\eqref{eq:FHNviscous} and then consider the
limit $\delta_v\ra 0$. Albeit appealing, this approach does not seem to work 
well in combination with regularity structures as $\delta_v \Delta$ is a 
regularizing operator of order two for any $\delta_v>0$ but has no spatial 
regularization properties for $\delta_v=0$. Hence, one idea is to replace 
$\delta_v\Delta$ by $\Delta^{\rho/2}$ and consider the limit $\rho\ra 0$ 
instead. This strategy seems more suited to work with regularity structures
but we are still lacking a full understanding of \emph{one-component} 
reaction--diffusion SPDEs with polynomial nonlinearities and involving the 
fractional Laplacian. This paper provides the first steps to fill this
gap.\medskip 

Additional motivation arises from the theory of regularity structures
itself. The theory does require certain nonlocal operators, even when working with the
classical Laplacian, so working with general nonlocal operators seems natural
and could lead to a better understanding of regularity structures. Furthermore,
the algebraic results on regularity structures recently announced
in~\cite{Hairer4}, developed simultaneously to the present work and available as
first versions in~\cite{BrunedHairerZambotti,ChandraHairer16}, are also relevant
as one may ask, how the viewpoint of rooted trees as regularity structure
elements may be exploited further. We remark that we are going to study a
problem, where the exponent of the fractional Laplacian $\rho$ allows us to
approach the border between subcritical SPDEs with finite renormalization group
(i.e.~super-renormalizable SPDEs)~\cite{Hairer2} and infinite-dimensional
renormalization groups.

From the perspective of dynamical aspects of the SPDEs~\eqref{eq:AC}
and~\eqref{eq:FHN}, it is known that solution properties can change
considerably if external noise is added~\cite{ArnoldSDE,Gardiner} or 
standard diffusion is replaced by a jump 
process~\cite{KlagesRadonsSokolov,MetzlerKlafter}. Hence, combining and 
comparing these two aspects is definitely going to lead to new 
applications.\medskip

In this paper, we focus on the \emph{parametric problem} for building the
regularity structure, i.e., we only study the model space of the structure,  not
yet the renormalization procedure, which might simplify anyhow due to the very
recent results in~\cite{Hairer4,BrunedHairerZambotti,ChandraHairer16}; however,
a detailed renormalization study of~\eqref{eq:AC} is still expected to reveal
interesting effects, e.g., in the context of large
deviations~\cite{HairerWeber,BerglundDiGesuWeber}. There are three key
parameters for~\eqref{eq:AC}, whose influence we would like to study:

\begin{itemize}
 \item \textit{the spatial dimension} $d\in\N$,
 \item \textit{the polynomial degree} $N\in\N$,
 \item \textit{the fractional exponent} $\rho\in(0,2]$,
\end{itemize}

\noindent
where the choice of interval for $\rho$ is motivated by the regularization of
the FitzHugh--Nagumo problem as well as the representative horizontal cut 
through the Feller--Takayasu diamond~\cite{KlagesRadonsSokolov}; in particular, 
our results also hold for Riesz--Feller operators with asymmetry parameter 
$\theta$ and $|\theta|\leq \min\{\alpha,2-\alpha\}$ but we do not spell the 
results out to simplify the notation. There seems currently to be no detailed
study available covering the parametric dependence of regularity structures, so
we hope that giving this dependence more explicitly for a key example such 
as~\eqref{eq:AC} helps to solidify the general understanding of regularity 
structures. The main results of our work can be summarized in a non-technical 
form as follows:

\begin{enumerate}
\item 	We verify that the fractional Laplacian falls within the class of
singular integral operators considered by Hairer. We determine for which
parameter values the SPDE~\eqref{eq:AC} fulfils the local subcriticality
condition, which is necessary for the application of regularity structures to
construct solutions; the main condition for space-time white noise is
\[
\rho>\rho_c=d\frac{N-1}{N+1}\;;
\]
see Section~\ref{sec:kernels} and Theorem~\ref{thm:genACsub1}.

\item 	We study the model space $\cT_F$
and its dependence on the three parameters $N,d,\rho$ analytically. There are 
several interesting results for space-time white-noise:
\begin{itemize}
\item[(a)] We relate the number of negative homogeneities to solutions of
constrained Diophantine equations for rational $\rho$;
see Proposition~\ref{prop:Dio}.

\item[(b)] We prove that the number of negative homogeneities in the
index set diverges like 
$(\rho-\rhocrit)^{-1}$ 
as $\rho$ approaches the subcriticality limit from above; see
Theorem~\ref{thm:hF}. 

\item[(c)] We prove that the number of negative-homogeneous elements of the
model space diverges in the subcriticality limit like  
$(\rho - \rhocrit)^{3/2}\exp (\beta_N d (\rho-\rhocrit)^{-1})$
as $\rho\searrow \rho_c$ with an explicitly computable constant $\beta_N$;
see Theorem~\ref{thm:cF2} and Theorem~\ref{thm:Nbigger2}.

\item[(d)] We determine statistical properties of the
negative-homogeneous elements of the model space viewed as rooted trees, such
as the asymptotic relative degree distribution (Proposition~\ref{prop:lawDj})
and the homogeneity distribution (Proposition~\ref{prop:lawH}). In particular,
we show that all trees are obtained by pruning a regular tree of degree $N+1$
of at most $N-1$ edges.
\end{itemize}
For a more general noise, we have the following result:
\begin{itemize}
 \item[(e)]  For any value of $(\rho,N,d)$, there exists a choice of noise
such that the SPDE~\eqref{eq:AC} is subcritical and its model space has elements
of homogeneity of $0^-$, i.e., $0-\kappa$ for any sufficiently small $\kappa$;
see Proposition~\ref{prop:bifalways}.
\end{itemize}    
 \item We study numerically the statistical properties of elements in $\cT_F$
viewed as rooted trees using graph-theoretical measures. We implement a symbolic
software package to compute with the rooted trees. Based upon computing several
benchmark examples, we conjecture a number of statistical limit graph
properties. These conjectures include the existence of limits of coarse-grained
graph measures as well as the existence of a limiting probability distribution
of negative homogeneities formed (for $\rho\in \Q$) similar to a fractal
construction; see Section~\ref{sec:compute}.
\end{enumerate}

In summary, the approach towards the theory of regularity structures in this 
paper is quite different from previous works using the theory. Here we focus
on the parametric structure of the model space and SPDEs approaching the 
subcriticality condition in a parameter limit. This approach is similar to
strategies employed in statistical physics to capture \lq\lq universal\rq\rq\
exponents as well as to parameter studies in dynamical systems. This view
reveals that there is a lot to be learned just by studying the model space as an
object itself.\medskip

The paper is structured as follows: In Section~\ref{sec:regstruct}, we review
some basics about regularity structures and fix the notation. In
Section~\ref{sec:kernels} we provide the details for the singular kernels, 
as introduced by Hairer, in the context of the fractional Laplacian. In the main
Section~\ref{sec:canreg}, we construct the model space $\cT_F$
for~\eqref{eq:AC}, we determine the subcriticality boundary, and we study the
growth of the number of elements in $\cT_F$. In Section~\ref{sec:statreg},
we derive several statistical properties of the model space near the
subcriticality boundary. In Section~\ref{sec:compute}, we present the
computations carried out by the new symbolic computation package.

\medskip
\noindent
\textbf{Notations:} If $x\in\R$, then $\lfloor x \rfloor$ denotes the
largest integer less than or equal to $x$. We write $|x|$ to denote either the
absolute value of $x\in\R$ or the $\ell^1$-norm of $x\in\R^{d}$, while $\|x\|$
denotes the Euclidean norm of $x\in\R^{d}$. If $A$ is a finite set, then
$|A|$ stands for the cardinality of $A$.
We use the notation $\text{supp}(f)$ for the support of a function $f$, and
$f\asymp g$ to indicate that $f/g$ is bounded uniformly above and below by
strictly positive constants. For the identity mapping on a set or space we
write $\Id$.

\medskip
\noindent
\textbf{Acknowledgments:} We would like to thank Romain Abraham, Marie
Albenque and Kilian Raschel for advice on the combinatorics of trees. CK has
been supported by the {Volkswagen\-Stiftung} via a Lichtenberg Professorship.

\section{Regularity Structures}
\label{sec:regstruct}

We briefly recall the basic notions from~\cite{Hairer1} to fix the notation.
Furthermore, this section separates out the parts of the theory of regularity
structures that are required to consider the fractional SPDE~\eqref{eq:AC},
i.e., to build the regularity structure associated to~\eqref{eq:AC}. Let
$\mathfrak{T}=(\cA,\cT,\cG)$ denote a \emph{regularity structure}. The
\emph{index set} $\cA\subset \R$ is locally finite, bounded below and with
$0\in \cA$, the \emph{model space} $\cT=\oplus_{\alpha\in\cA}\cT_\alpha$ is a
graded vector space with Banach spaces $\cT_\alpha$, and $\cG$ is the
\emph{structure group} of linear operators acting on $\cT$, such that for
every $\Gamma \in \cG$, $\alpha\in \cA$, $\tau\in \cT_\alpha$ one has
\be
\label{eq:lowerorder}
\Gamma \tau-\tau\in \bigoplus_{\beta <\alpha} \cT_\beta\;.
\ee
Roughly speaking, one has to view $\cT$ as containing abstract symbols
representing basic functions or distributions, while $\cG$ links together
different basis points for general abstract expansions of functions or
distributions. Two basic examples are the polynomial regularity structure (see
Example~\ref{ex:regpoly} below) and rough paths~\cite{FrizHairer}. It is
standard to require that $\cT_0$ is isomorphic to $\R$ with unit vector ${\bf
1}$ as well as $\Gamma {\bf 1}={\bf 1}$ for all $\Gamma\in \cG$. It will become
clear later on that the index $\alpha$ can be associated to \lq\lq
regularity\rq\rq\ classes. Given another regularity structure
$\fThat=(\hat{\cA},\hat{\cT},\hat{\cG})$, one writes $\fT\subset \fThat$ if
$\cA\subset \hat{\cA}$, there exists an injective map $\iota:\cT\ra \hat{\cT}$
such that $\iota(\cT_\alpha)\subset \hat{\cT}_\alpha$ for all $\alpha\in \cA$,
$\iota(\cT)$ is invariant under $\hat{\cG}$, and $j$ given via
$j(\cdot):=\iota^{-1}(\cdot) \iota$ is a surjective group homomorphism from
$\hat{\cG}$ to~$\cG$.
We endow $\R^{d+1}$, using coordinates
$z=(t,x)\in\R\times\R^{d}$, with a \emph{scaling} 
\benn
\fs := (\fs_0,\fs_1,\ldots,\fs_d)\in\N^{d+1}
\eenn
which induces a \emph{scaled degree} for each multiindex $k\in\N_0^{d+1}$ given
by
\benn
|k|_\fs:=\sum_{i=0}^{d} \fs_i k_i
\eenn
and a \emph{scaling map} 
\benn
\cS_\fs^\delta:\R^{d+1}\ra \R^{d+1},\qquad \cS_\fs^\delta(t,x):=
(\delta^{-\fs_0}t,\delta^{-\fs_1}x_1,\ldots,\delta^{-\fs_d}x_{d})\;,
\eenn
as well as a \emph{scaled metric} on $\R^{d+1}$ defined as
\benn
d_\fs((t,x),(s,y)):=|t-s|^{1/\fs_0}+\sum_{i=1}^{d}|x_i-y_i|^{1/\fs_i}\;.
\eenn
It is useful to employ the notations $|\fs|:=\sum_{i=0}^{d} \fs_i$ as well
as $d_\fs((t,x),(s,y))=:\|(t,x)-(s,y)\|_\fs$ although $\|\cdot\|_\fs$ is not
a norm. The associated shifted scalings for a function $\phi$ are defined as
\benn
\bigl(\cS^\delta_{\fs,\bar{z}}\phi\bigr)(z):=\delta^{-|\fs|}
\phi\bigl(\cS^\delta_\fs(z-\bar{z})\bigr)\;.
\eenn
Essentially, one employs scalings in the theory of regularity structures to 
bring the action of certain differential operators on a common invariant 
scale. Here we restrict to the scalings necessary for the operator 
$\partial_t+(-\Delta)^{\rho/2}$, which explains the splitting between the first
time coordinate and the spatial coordinates in the scaling.

\begin{ex}
\label{ex:regpoly}
The \emph{polynomial regularity structure} $(\bar{\cA},\bar{\cT},\bar{\cG})$ on
$\R^{d+1}$ associated with the scaling~$\fs$ is given by $\bar{\cA}=\N_0$,
by the polynomial ring $\bar{\cT}=\R[X_0,X_1,\ldots,X_d]$ 
with a grading induced by the scaled degree 
\benn
\bar{\cT}_n:=\textnormal{span}\{X^k:|k|_\fs=n\}\;, 
\eenn
and $\bar{\cG}$ given by translations. Here we use the multiindex notation
$X^k = X_0^{k_0}\dots X_d^{k_d}$. Indeed, the abstract polynomials have 
a natural structure group with elements $\Gamma_h\in\bar{\cG}$ acting by 
$\Gamma_hX^k=(X+h)^k$ for $h\in\R^{d+1}$ so $\bar{\cG}\simeq \R^d$;
see~\cite[Sec~2.2]{Hairer1}. The requirement~\eqref{eq:lowerorder} then just 
means for the polynomial regularity structure that translating a polynomial 
and then subtracting the original one leaves only lower-degree terms. For
all regularity structures considered here, we always assume that
$\bar{\cT}\subset\cT$.
\end{ex}

In general, whenever $\tau\in\cT_\alpha$, we declare its \emph{homogeneity}
to be given by $|\tau|_\fs = \alpha$. The homogeneity is supposed to reflect a
type of H\"older continuity with respect to the scaled metric $d_\fs$ of the
function or distribution $\tau$ represents. If $\alpha>0$, then a function
$f:\R^{d+1}\to\R$ belongs to $\cC^\alpha_\fs$ if it satisfies a similar
condition on increments as classical H\"older functions, but for the scaled
metric (see~\cite[Def.~2.14]{Hairer1} for a precise definition). If $\alpha<0$,
then a Schwartz distribution $\xi$ is said to belong to $\cC^\alpha_\fs$ if
there exists a constant $C$ such that 
\begin{equation}
 \bigl|\langle \xi, \cS^\delta_{\fs,z}\eta \rangle\bigr| \leqs C\delta^\alpha
\end{equation} 
for any sufficently smooth test function $\eta$ supported in a $d_\fs$-ball of
radius $1$ and any $\delta\in(0,1]$ (see~\cite[Def.~3.7]{Hairer1}). 

There are two (out of several more) important steps we address in this paper 
regarding regularity structures for~\eqref{eq:AC}. Firstly, we have to check 
whether the fractional Laplacian fits into the framework of singular integral 
operators required in the theory of regularity structures~\cite{Hairer1}. Only 
if this is the case, we have a hope of being able to directly apply the
theory. 
This step is quite straightforward. The second step we cover here is to build
the index set $\cA$ and model space $\cT$ and study their dependence upon
parameters. This step is already substantially more involved. 

There are two important further steps in the analysis of the SPDE that we do
not consider here, but for which a general method is given
in~\cite{Hairer1,BrunedHairerZambotti,ChandraHairer16}. The first step is the
definition and analysis of a fixed-point equation, equivalent to a regularised
version of the SPDE, but formulated in a space of modelled distributions
$\cD^\gamma$ (an analogue of the H\"older space on the level of the regularity
structure, cf.~\cite[Def.~3.1]{Hairer1}). The second step is the renormalisation
procedure needed to make
sense of the limit of vanishing regularisation. For both steps, a good
understanding of the model space is essential. Indeed, we observe the following:

\begin{itemize}
\item The negative-homogeneous sector $\bigoplus_{\alpha\in\cA\cap\R_-}
\cT_\alpha$ carries the distribution-valued part of the solution. While in the
case of the standard Laplacian ($\rho=2$) with space-time white noise, this part
only contains the stochastic convolution of heat kernel and noise, for general
fractional Laplacians it may contain many more terms.
 
\item The constants needed to renormalise the equation are determined by
summing over contractions of subtrees in negative-homogeneous elements, as
discussed in~\cite{Hairer4,BrunedHairerZambotti,ChandraHairer16}. We expect that
the number of renormalisation constants diverges as one approaches the
subcriticality boundary.
\end{itemize}

\section{Singular Kernels}
\label{sec:kernels}

To formulate fixed-point equations associated to (S)PDEs in the context of 
regularity structures, we have to consider integration against singular integral 
kernels. Consider a linear differential operator $L$ with constant coefficients 
acting on $u=u(t,x)=u(z)$ for $z\in\R^{d+1}$. Let $G=G(t,x)=G(z)$ denote 
the fundamental solution or kernel, i.e.,
\benn
(LG)(z) = \bm{\delta}(z)\;, 
\eenn 
for $t>0$, where $\bm{\delta}\in \cS(\R^{d+1})$ denotes the delta-distribution.
Frequently, $G$ is a singular kernel. Consider for instance the heat operator
$L:=\partial_t-\sum_{j=1}^d\partial_{x_jx_j}^2$ with associated heat kernel
\benn
G_2(z,\bz) 
= \frac{1}{|4\pi (t-\bar t\,)|^{d/2}}
\exp\left(-\frac{\|x-\bx\|^2}{4(t-\bar t\,)}\right)\;.
\eenn
Here $\bz=(\bt,\bx)\in\R^{d+1}$ and the subscript of $G$, fixed here to $2$, is
used to distinguish the heat kernel $G_2$ from other kernels $G_\rho$ we are
going to consider below, while the subscript will be omitted for the
general theory of singular kernels. Note that $G_2$ is singular at
$z=\bz$. 

\subsection{Hairer's Singular Kernels}

In Hairer's theory of regularity structures, one first aims to decompose
a general kernel $G:\R^{d+1}\times \R^{d+1}\ra \R$ via
\benn
G(z,\bz)=K(z,\bz)+R(z,\bz)\;,
\eenn
where $K$ is the singular part and $R$ is a smooth part. So the key object is
$K$ and we recall the assumptions on $K=K(z,\bz)$ as stated in \cite[Assumption
5.1 / Assumption 5.4]{Hairer1}. A mapping $K:\R^{d+1}\times \R^{d+1}\ra \R$ is
called a \emph{regularizing kernel of order $\beta>0$} if $K$ can be decomposed
as 
\be
\label{eq:kernel_decomp}
K(z,\bz)=\sum_{n\geq 0} K_n(z,\bz)
\ee
and for all $n\geq 0$, $\text{supp}(K_n)\subseteq \{(z,\bz)\in\R^{d+1}\times 
\R^{d+1}:\|z-\bz\|_\fs\leq 2^{-n}\}$, for any $k,l\in \N_0^{d+1}$, there
exists a constant $C$ such that
\begin{align}
\bigl|\txtD_1^k\txtD_2^l K_n(z,\bz)\bigr| &\leqs C
2^{(|\fs|-\beta+|l|_\fs
+|k|_\fs)n}\;, 
\nonumber
\\
\left|\int_{\R^{d+1}} (z-\bz)^l\txtD_2^k K_n(z,\bz)~\txtd z\right| 
&\leqs C
2^{-\beta n}\;, 
\nonumber
\\
\left|\int_{\R^{d+1}} (\bz-z)^l\txtD_1^k K_n(z,\bz)~\txtd \bz\right|
&\leqs C 2^{-\beta n}\;, 
\label{eq:Aker3}
\end{align}
hold uniformly over all $n\geq 0$ and all $z,\bz\in\R^d$, and there exists $r>0$
such that 
\be
\label{eq:annihilate}
\int_{\R^{d+1}} K_n(z,\bz)P(\bz)~\txtd \bz = 0
\ee
for every $n\geq 0$, every $z\in\R^d$ and every monomial $P$ of degree
$k^*\in\N_0^{d+1}$ with $|k^*|_\fs\leq r$. We also refer to the condition
\eqref{eq:annihilate} as $r$-order annihilation of polynomials.\medskip

If $G(z,\bz)=G(z-\bz)$ just depends upon a single variable, it is possible
to identify $\beta$-regularizing kernels with point singularities via scaling. 

\begin{lem}[{\cite[Lem. 5.5]{Hairer1}}] 
\label{lem:ker1}
Let $\bar{K}:(\R^{d+1}\setminus\{0\})\ra \R$ be smooth and suppose there
exists $\beta>0$ such that
\be
\label{eq:scale_ker}
\bar{K}(\cS_{\fs}^\delta z)=\delta^{|\fs|-\beta} \bar{K}(z)
\ee
holds for all $z\neq 0$ and for all $\delta \in(0,1]$. Then it is possible to
write $\bar{K}(z)=K(z)+R(z)$
where $R\in \cC^\I(\R^{d+1},\R)$ and $(z,\bz)\mapsto K(z-\bz)$ is a
regularizing kernel of order $\beta$.
\end{lem}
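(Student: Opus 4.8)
The plan is to mimic the classical Littlewood--Paley / wavelet construction, using the exact homogeneity \eqref{eq:scale_ker} to reduce every estimate to a single reference scale, and then to correct the localised pieces so that they annihilate polynomials without spoiling either the dyadic support or the smoothness of the remainder. I would fix a smooth cutoff $\varphi$ with $\varphi\equiv 1$ on $\{\|z\|_\fs\le 1/2\}$ and $\mathrm{supp}\,\varphi\subseteq\{\|z\|_\fs\le 1\}$, set $\chi:=\varphi-\varphi\circ\cS_\fs^{1/2}$ (a bump supported in the annulus $\{1/4\le\|z\|_\fs\le 1\}$), and record the telescoping identity $\sum_{n\ge 0}\chi(\cS_\fs^{2^{-n}}z)=\varphi(z)$ for $z\neq 0$, which follows from $\cS_\fs^\delta\cS_\fs^{\delta'}=\cS_\fs^{\delta\delta'}$. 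Then $(1-\varphi)\bar K$ is already smooth, and the singular part is $\varphi\bar K=\sum_{n\ge 0}K_n^{(0)}$ with $K_n^{(0)}:=\chi(\cS_\fs^{2^{-n}}\cdot)\,\bar K$. By \eqref{eq:scale_ker}, $K_n^{(0)}(z)=2^{n(|\fs|-\beta)}g(\cS_\fs^{2^{-n}}z)$ for the \emph{fixed} smooth compactly supported $g:=\chi\bar K$ (supported away from the origin), so $\mathrm{supp}\,K_n^{(0)}\subseteq\{\|z\|_\fs\le 2^{-n}\}$ automatically, differentiating the scaled form (each $\txtD_i$ producing a factor $2^{n\fs_i}$) yields $|\txtD^k K_n^{(0)}|\le C_k\,2^{(|\fs|-\beta+|k|_\fs)n}$, and the integral bounds in \eqref{eq:Aker3} follow after an integration by parts that moves $\txtD^k$ onto $z^l$, combined with the change of variables $w=\cS_\fs^{2^{-n}}z$; that same change of variables computes the moments $\mu_m^{(n)}:=\int z^m K_n^{(0)}\,\txtd z=2^{-(|m|_\fs+\beta)n}\nu_m$, where $\nu_m:=\int w^m g(w)\,\txtd w$.

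The only axiom this naive decomposition misses is the annihilation \eqref{eq:annihilate}: each $K_n^{(0)}$ has small but nonzero moments up to scaled degree $r$, and removing them is the heart of the matter. I would fix, for each monomial exponent $m$ with $|m|_\fs\le r$, a bump $\Phi_m$ supported in $\{\|z\|_\fs\le 1/2\}$ forming a dual basis $\int z^{m'}\Phi_m\,\txtd z=\delta_{mm'}$ (a finite, invertible linear system), introduce the tail sums $\sigma_m^{(n)}:=\sum_{j\ge n}\mu_m^{(j)}=2^{-(|m|_\fs+\beta)n}\tilde\nu_m$, and construct scale-$2^{-n}$ bumps $G_n$ with $\int z^m G_n=\sigma_m^{(n)}$ for $|m|_\fs\le r$; these come out in exactly the form $G_n(z)=2^{n(|\fs|-\beta)}\tilde\Psi(\cS_\fs^{2^{-n}}z)$ for a fixed $\tilde\Psi$, hence obey the same bounds as $K_n^{(0)}$. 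Since $\sigma_m^{(n)}-\sigma_m^{(n+1)}=\mu_m^{(n)}$, the redefinition
\[
K_n:=K_n^{(0)}+G_{n+1}-G_n ,\qquad n\ge 0,
\]
(with the $G_n$ supported in $2^{-n-1}$-balls, so $K_n$ stays inside the $2^{-n}$-ball) still satisfies \eqref{eq:Aker3} term by term and has $\int z^m K_n\,\txtd z=\mu_m^{(n)}+\sigma_m^{(n+1)}-\sigma_m^{(n)}=0$ for $|m|_\fs\le r$, i.e.\ \eqref{eq:annihilate}.

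Finally I would read off the smooth remainder from the telescoping partial sums
\[
\sum_{n=0}^{N}K_n=\bigl(\varphi-\varphi\circ\cS_\fs^{2^{-N-1}}\bigr)\bar K+G_{N+1}-G_0 .
\]
Since $\beta>0$, $\|G_{N+1}\|_{L^1}=2^{-(N+1)\beta}\|\tilde\Psi\|_{L^1}\to 0$, while $(\varphi-\varphi\circ\cS_\fs^{2^{-N-1}})\bar K\to\varphi\bar K$ in $L^1$, dominated by $|\varphi\bar K|$ (integrable precisely because $\bar K(z)\asymp\|z\|_\fs^{\beta-|\fs|}$ near $0$ and $\beta>0$). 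Hence $K:=\sum_{n\ge 0}K_n=\varphi\bar K-G_0$ (in $L^1_{\mathrm{loc}}$ and pointwise off the origin), and $R:=\bar K-K=(1-\varphi)\bar K+G_0\in\cC^\I(\R^{d+1},\R)$, which is the assertion.

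I expect the genuine obstacle to be the annihilation correction above: the correction must live at scale $2^{-n}$, hence cannot be a single fixed bump, yet any generic scale-$2^{-n}$ correction of the necessary amplitude $2^{n(|\fs|-\beta)}$ sums over $n$ to something as singular as $\bar K$ itself and would destroy the smoothness of $R$. Forcing the corrections to \emph{telescope} through the tail moments $\sigma_m^{(n)}$ is what resolves this; the finicky points are then the support bookkeeping around the $G_n$ and checking that $\sum_n K_n$ converges with only the crude estimate $\|G_{N+1}\|_{L^1}\to 0$ in hand. (For the purposes of this paper one may instead simply invoke \cite[Lem.~5.5]{Hairer1}, which is exactly this statement.)
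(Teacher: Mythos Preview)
The paper does not prove this lemma at all: it is stated with the citation \cite[Lem.~5.5]{Hairer1} and used as a black box to deduce Proposition~\ref{prop:FLrho}. Your sketch is therefore not a comparison target but a reconstruction of Hairer's original argument, and as such it is essentially correct and follows the same route Hairer takes: dyadic partition of unity via $\chi=\varphi-\varphi\circ\cS_\fs^{1/2}$, reduction of all bounds to the single profile $g=\chi\bar K$ through the exact scaling~\eqref{eq:scale_ker}, and a telescoping moment correction $G_{n+1}-G_n$ to enforce~\eqref{eq:annihilate} without destroying the smooth remainder. Your closing parenthetical is exactly what the paper does.
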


As an example, consider a parabolic scaling $\fs=(2,1,1,\ldots,1)$ and the heat
kernel $\bar{K}=G_2$. Note that $|\fs|=d+2$. Then one can easily check that
\benn
G_2(\cS_{\fs}^\delta z)=\frac{1}{(4\pi
\delta^{-2}t)^{d/2}}\exp\Biggl(-\frac{1}{4t\delta^{-2}}
\sum_{j=1}^d (x_j\delta^{-1})^2\Biggr)=\delta^{d}G_2(z)\;.
\eenn
From \eqref{eq:scale_ker} the condition $|\fs|-\beta=d+2-\beta\stackrel{!}{=}d$
is required, i.e., taking $\beta=2$ means that the heat kernel is a regularizing
kernel of order $2$. In the following, we want to apply the theory of singular 
kernels also to nonlocal diffusion operators described by the fractional Laplacian. 

\subsection{Fractional Singular Integral Operators}

Let $G_\rho(t,x,y)=G_\rho(t,x-y)$ be the transition density of a rotationally
symmetric $\rho$-stable L\'evy process with \emph{L\'evy measure}
$\nu(\txtd x)= \kappa(d,\rho)\|x\|^{-d-\rho}~\txtd x$, where $\kappa(d,\rho)$ is
a normalizing constant depending upon $d\in\N$ and $\rho\in(0,2]$. Then one has
\be
\label{eq:heatfL}
G_\rho(t,x):=\frac{1}{(2\pi)^{d}}\int_{\R^{d}}
\txte^{\txti x^\top \xi}~\txte^{-t\|\xi\|^\rho}~\txtd \xi
\ee
for $x\in\R^d$, $t>0$.
The transition density~\eqref{eq:heatfL} generates a semigroup via
\benn
P_tf(x):=\int_{\R^d}G_\rho(t,x-y)f(y)\,\txtd y\;.
\eenn
The infinitesimal generator of $P_t$ is given by the \emph{fractional
Laplacian}
\benn
-(-\Delta)^{\rho/2}f(x)=\kappa(d,\rho)\lim_{\varepsilon\ra
0^+}\int_{\|y\|>\varepsilon}
\frac{f(x+y)-f(x)}{\|y\|^{d+\rho}}\,\txtd y\;,
\eenn
and we will write $\Delta^{\rho/2}:=-(-\Delta)^{\rho/2}$ to simplify the
notation. One also refers to $G_\rho(t,x-y)$ as the \emph{heat kernel of the
fractional Laplacian}. We mention that there are many different definitions of
the fractional Laplacian~\cite{Kwasnicki}, but the probabilistic one via
\eqref{eq:heatfL} is particularly convenient in our context as it makes the next
result very transparent upon using classical results on L\'evy processes. 

\begin{prop}
\label{prop:FLrho}
Let $\rho\in(0,2]$ and consider the scaling $\fs=(\rho,1,1,\ldots,1)$. Then the
fractional 
heat kernel $G_\rho$ is a regularizing kernel of order $\rho$. 
\end{prop}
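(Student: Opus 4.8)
The plan is to reduce Proposition~\ref{prop:FLrho} to Lemma~\ref{lem:ker1} by verifying the exact scaling identity \eqref{eq:scale_ker} for the fractional heat kernel $G_\rho$ with $\beta=\rho$ and $|\fs|=d+\rho$. Concretely, I would first record the scaling behaviour of $G_\rho$ under the anisotropic map $\cS_\fs^\delta(t,x)=(\delta^{-\rho}t,\delta^{-1}x)$: starting from the Fourier representation \eqref{eq:heatfL}, substitute $\xi\mapsto\delta\xi$ in the integral defining $G_\rho(\delta^{-\rho}t,\delta^{-1}x)$. The $\rho$-homogeneity of the symbol, $\|\delta\xi\|^\rho=\delta^\rho\|\xi\|^\rho$, exactly cancels the $\delta^{-\rho}$ in the time variable inside the exponential $\txte^{-t\|\xi\|^\rho}$, while the plane wave $\txte^{\txti x^\top\xi}$ is unchanged; the Jacobian of the substitution contributes $\delta^{d}$. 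This yields $G_\rho(\cS_\fs^\delta z)=\delta^{d}G_\rho(z)=\delta^{|\fs|-\rho}G_\rho(z)$, which is precisely \eqref{eq:scale_ker} with $\beta=\rho$. (Equivalently, one may invoke the self-similarity $G_\rho(t,x)=t^{-d/\rho}G_\rho(1,t^{-1/\rho}x)$ of the transition density of a $\rho$-stable process, which is classical.)

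Second, I would check the smoothness hypothesis of Lemma~\ref{lem:ker1}, namely that $G_\rho$ restricted to $\R^{d+1}\setminus\{0\}$ — more precisely, that $z\mapsto G_\rho(t,x)$ extended suitably — is $\cC^\infty$ away from the origin. For $t>0$ this is immediate from \eqref{eq:heatfL}, since $\txte^{-t\|\xi\|^\rho}$ decays faster than any polynomial and hence the integral and all its derivatives in $(t,x)$ converge absolutely and locally uniformly. For $t\leq0$ one sets $G_\rho\equiv0$ (the kernel is the forward fundamental solution, supported in $\{t>0\}$), and smoothness across $\{t=0,\ x\neq0\}$ follows from the rapid decay of $G_\rho(t,x)$ and all its derivatives as $t\searrow0$ for $x$ bounded away from $0$ — a standard off-diagonal estimate for stable heat kernels, which can be read off from the known bound $G_\rho(t,x)\asymp t/\|x\|^{d+\rho}$ for $\|x\|^\rho\geq t$, together with analogous bounds for derivatives obtained by differentiating \eqref{eq:heatfL}. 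Once smoothness on $\R^{d+1}\setminus\{0\}$ and the scaling identity are in hand, Lemma~\ref{lem:ker1} applies verbatim and produces the decomposition $G_\rho=K+R$ with $R\in\cC^\infty$ and $K$ a regularizing kernel of order $\rho$, which is the assertion.

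The step I expect to require the most care is the off-diagonal smoothness of $G_\rho$ up to and including the hyperplane $\{t=0\}\setminus\{0\}$: unlike the Gaussian case $\rho=2$, the fractional heat kernel is not given by an elementary closed form, so one genuinely needs the decay estimates for $G_\rho$ and its derivatives in the regime $\|x\|\gtrsim t^{1/\rho}$ to see that gluing the $t>0$ branch to the zero function at $t=0$ leaves a smooth function on $\R^{d+1}\setminus\{0\}$. These estimates are classical (see e.g.~\cite{ChenKimSong}), and the homogeneity already established makes the bookkeeping in the region $\|z\|_\fs\asymp1$ the only one that matters; everywhere else it follows by rescaling. A minor point to state explicitly is that $\kappa(d,\rho)$ and the $(2\pi)^{-d}$ prefactor are constants independent of the scaling variable $\delta$, so they play no role in \eqref{eq:scale_ker}. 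With these ingredients assembled, the proof is short: establish the scaling, note the smoothness, and quote Lemma~\ref{lem:ker1}.
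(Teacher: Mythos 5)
Your proposal is correct and follows the same route as the paper: establish the exact anisotropic scaling identity $G_\rho(\cS_\fs^\delta z)=\delta^{|\fs|-\rho}G_\rho(z)$ (the paper cites the self-similarity law $G_\rho(t,x)=t^{-d/\rho}G_\rho(1,t^{-1/\rho}x)$ directly, which is your parenthetical alternative to the Fourier substitution), observe $|\fs|-\rho=d$, and invoke Lemma~\ref{lem:ker1}. Your additional verification of the smoothness hypothesis on $\R^{d+1}\setminus\{0\}$, including across $\{t=0,\ x\neq0\}$ after extending by zero, is a welcome piece of care that the paper's proof leaves implicit.
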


\begin{proof}
It is well-known from the theory of L\'evy processes~\cite{Sato} (and can be
seen on~\eqref{eq:heatfL}) that $G_\rho$ satisfies the scaling law
$G_\rho(t,x)=t^{-d/\rho}G_\rho(1,t^{-1/\rho}x)$, so that
\benn
G_\rho(\cS_{\fs}^\delta
z)=G_\rho(\delta^{-\rho}t,\delta^{-1}x)=t^{-d/\rho}\delta^{d}G_\rho(1,t^{-1/\rho
}x)
=\delta^dG_\rho(z)\;.
\eenn
Furthermore, $|\fs|-\rho=\rho+d-\rho=d$ and applying Lemma \ref{lem:ker1}
finishes the proof.
\end{proof}

\begin{rmk}
Strictly speaking, the reconstruction theorem allowing to apply the theory
of regularity structures (cf.~\cite[Thm.~3.10]{Hairer1}) requires $\rho$ to be
a rational number, because the scaling $\fs$ should consist of integers. If
$\rho=p/q\in\Q$, this is not a problem since one can take $\fs=(p,q,\dots,q)$
instead of $\fs=(\rho,1,\dots,1)$ for the scaling. However, it is expected that
the reconstruction theorem also holds for incommensurable scaling vectors. 
Furthermore, the construction of a model space as an independent object does
not require rationality of $\rho$. 
To simplify the notation, in all that follows we will stick to the notation
$\fs=(\rho,1,\dots,1)$ for the scaling, because multiplying $\fs$ by a positive
integer does not affect the results.
\end{rmk}%

\begin{rmk}
It may look natural to try to prove Proposition \ref{prop:FLrho} differently. 
It is known from the theory of fractional Laplace operators, see
e.g.~\cite{ChenKimSong}, that
\benn
\label{eq:scaleFL}
G_\rho(t,x-y)\asymp \min\left(t^{-d/\rho},\frac{t}{\|x-y\|^{d+\rho}}\right)\;.
\eenn
Taking $\fs=(\rho,1,1,\ldots,1)$, it follows that
\benn
\label{eq:ascalefL}
G_\rho(\cS_{\fs}^\delta z)\asymp \delta^{|\fs|-\beta} G_\rho(z)
\eenn
holds for $\beta=\rho$. However, it seems more convenient to work with the exact
scaling property to implement the strategy in the proof of Lemma \ref{lem:ker1} 
from \cite[Lem. 5.5]{Hairer1} verbatim. 
\end{rmk}

\section{Building the Model Space}
\label{sec:canreg}

There is a general procedure to build a regularity structure for an SPDE as
discussed in~\cite[Sec.~8]{Hairer1}. However, the structure itself can be
different for each particular equation. Since we want to consider an entire
family of equations, we have to demonstrate how the regularity structures differ
for the members of the family.  
  
\subsection{Local Subcriticality}
\label{ssec:lsubcrit}

Let $\alpha<0$ denote the smallest upper bound for the H\"older regularity of
the driving noise $\xi$ and recall that $d$ denotes the spatial dimension while
$\rho$ is the regularizing order of the fractional Laplacian. It is known that
for the classical case of space-time white noise and the parabolic scaling, one
has $\alpha =-(d+2)/2$. The generalization for the fractional scaling is as
follows.

\begin{lem}
For the scaling $\fs=(\rho,1,1,\dots,1)$, the smallest upper bound for the
H\"older regularity of space-time white noise $\xi$ is given by 
\begin{equation}
\label{eq:scaling_whitenoise} 
\alpha = - \frac{\rho+d}{2}\;.
\end{equation}
\end{lem}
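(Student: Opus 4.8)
The plan is to compute the scaling exponent of space-time white noise directly from the defining estimate for $\cC^\alpha_\fs$, namely the bound $|\langle\xi,\cS^\delta_{\fs,z}\eta\rangle|\leqs C\delta^\alpha$. Space-time white noise $\xi$ on $\R\times\T^d$ is the centred Gaussian (generalized) random field with covariance $\E[\langle\xi,\varphi\rangle\langle\xi,\psi\rangle]=\langle\varphi,\psi\rangle_{L^2(\R^{d+1})}$. Since the relevant notion of regularity here is in the sense of negative-order H\"older--Besov spaces, the natural strategy is to estimate the second moment $\E[\langle\xi,\cS^\delta_{\fs,z}\eta\rangle^2]$, read off the power of $\delta$, and then invoke the standard Kolmogorov-type argument (as in \cite[Sec.~10]{Hairer1}, or the white-noise computation preceding the definition of $\cC^\alpha_\fs$) to conclude that $\xi\in\cC^{\alpha}_\fs$ almost surely for $\alpha$ slightly below the critical exponent, and that it lies in no better space.

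Concretely, first I would recall the shifted scaling $\bigl(\cS^\delta_{\fs,z}\eta\bigr)(w)=\delta^{-|\fs|}\eta\bigl(\cS^\delta_\fs(w-z)\bigr)$ with $|\fs|=\rho+d$ for the scaling $\fs=(\rho,1,\dots,1)$. Then
\benn
\E\bigl[\langle\xi,\cS^\delta_{\fs,z}\eta\rangle^2\bigr]
=\int_{\R^{d+1}}\bigl(\cS^\delta_{\fs,z}\eta\bigr)(w)^2\,\txtd w
=\delta^{-2|\fs|}\int_{\R^{d+1}}\eta\bigl(\cS^\delta_\fs(w-z)\bigr)^2\,\txtd w.
\eenn
Changing variables $u=\cS^\delta_\fs(w-z)$, whose Jacobian is $\delta^{\fs_0}\delta^{\fs_1}\cdots\delta^{\fs_d}=\delta^{|\fs|}$, the integral becomes $\delta^{-2|\fs|}\cdot\delta^{|\fs|}\|\eta\|_{L^2}^2=\delta^{-|\fs|}\|\eta\|_{L^2}^2$. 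Hence the $L^2$-norm of $\langle\xi,\cS^\delta_{\fs,z}\eta\rangle$ scales like $\delta^{-|\fs|/2}=\delta^{-(\rho+d)/2}$, which pins down the exponent $\alpha=-(\rho+d)/2$.

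The one genuine subtlety — the step I expect to be the main (though minor) obstacle — is passing from this second-moment bound to the almost-sure statement with the correct sharpness, i.e.\ controlling the supremum over test functions $\eta$ in a ball and over base points $z$ uniformly, and verifying that one cannot do better than $\alpha=-(\rho+d)/2$. This is handled exactly as in the classical parabolic case: Gaussian hypercontractivity upgrades the second moment to all moments, $\E|\langle\xi,\cS^\delta_{\fs,z}\eta\rangle|^{p}\lesssim\delta^{-p(\rho+d)/2}$, and then a Kolmogorov/chaining argument over a countable dense family of test functions and a dyadic net of base points, together with a union bound, yields $\xi\in\cC^{-(\rho+d)/2-\kappa}_\fs$ almost surely for every $\kappa>0$; a matching lower bound (testing against a fixed bump and using the Gaussian anti-concentration $\P(|\langle\xi,\cS^\delta_{\fs,z}\eta\rangle|\geqs c\delta^{-(\rho+d)/2})\geqs c'$) shows $\xi\notin\cC^{-(\rho+d)/2+\kappa}_\fs$, so $\alpha=-(\rho+d)/2$ is indeed the smallest upper bound. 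Setting $\rho=2$ recovers the known value $\alpha=-(d+2)/2$, which is a useful sanity check.
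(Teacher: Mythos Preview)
Your argument is correct and follows essentially the same route as the paper: compute the second moment $\E[\langle\xi,\cS^\delta_{\fs,z}\eta\rangle^2]$ via the change of variables to obtain the scaling $\delta^{-(\rho+d)}$, then upgrade to all moments by Gaussianity and conclude with a Kolmogorov-type continuity theorem. If anything, you are more careful than the paper about the sharpness of the exponent, which the paper leaves implicit.
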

\begin{proof}
This follows again from a scaling argument. Consider
scaled versions $\xi_\delta$ of $\xi$ defined by 
\[
 \langle\xi_\delta,\eta\rangle =
\langle\xi,\cS^{\delta}_{\fs,0}\eta\rangle
\]
for every test function $\eta$.
The property $\E[\xi(z)\xi(\bz)] = {\bf \delta}(z-\bz)$ of space-time white
noise yields  
\[
\E\bigl[\langle\xi_\delta,\eta\rangle^2 \bigr]
= \int_{\R^{d+1}} (\cS^{\delta}_{\fs,0}\eta(t,x))^2 \,\txtd t\,\txtd x 
= \delta^{-(\rho+d)} \int_{\R^{d+1}} \eta(\bar t,\bar x)^2 \,\txtd \bar
t\,\txtd \bar x
= \cO(\delta^{-(\rho+d)})\;.
\]
The result then follows from the fact that the $p$-th moment of a Gaussian
distribution scales like the power $\frac{p}2$ of its second moment
and the Kolmogorov-type continuity theorem~\cite[Theorem~2.7]{Chandra_Weber_15}.
\end{proof}

We can now check under which algebraic conditions the theory of
regularity structures from~\cite{Hairer1} applies to the family of 
SPDEs~\eqref{eq:AC}. Consider SPDEs of the form
\be
\label{eq:SPDE_sec8}
\partial_t u = Lu +f(u)+\xi\;,
\ee
where $f$ is a polynomial and $L$ is a differential operator inducing a
regularizing kernel of order $\beta$. One defines 
\be
\label{eq:dummy}
F(U,\Xi)=f(U)+\Xi
\ee
where $\Xi$ and $U$ are dummy variables. Each term in the expression
\eqref{eq:dummy} gets assigned a homogeneity reflecting the H\"older class
of the function or distribution it represents. Define $\Xi$ to have 
homogeneity $\alpha_0 = \alpha-\kappa$, where $\kappa>0$ is a fixed
arbitrarily small constant, $U$ to have homogeneity $\alpha_0+\beta$, and 
apply the usual sum rule for exponents of product terms. 
Then~\eqref{eq:SPDE_sec8} is called \emph{locally subcritical} if all terms 
in $f(U)$ have homogeneity strictly greater than $\alpha_0$; 
see~\cite[Sec.~8]{Hairer1}.  

\begin{prop}
\label{prop:genACsub}
For $\fs=(\rho,1,1,\ldots,1)$, the fractional Allen--Cahn 
equation~\eqref{eq:AC} is locally subcritical if and only 
if either (i) $\alpha_0+\rho>0$, or
(ii) $\rho>-\frac{N-1}{N}\alpha_0$, or (iii) $N=0$ hold, 
where $N$ is the degree of $f$ as in~\eqref{eq:poly}.
\end{prop}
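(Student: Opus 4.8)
The plan is to unwind the definition of local subcriticality directly, since the statement is essentially a bookkeeping exercise once the homogeneities are assigned. First I would recall from the setup preceding the proposition that $\Xi$ has homogeneity $\alpha_0 = \alpha - \kappa$, the integration against the kernel $G_\rho$ raises homogeneity by $\beta = \rho$ (by Proposition~\ref{prop:FLrho}), so the abstract solution symbol $U$ has homogeneity $\alpha_0 + \rho$, and products add homogeneities. For the fractional Allen--Cahn equation we have $f(U) = \sum_{j=0}^N a_j U^j$, so the monomial $U^j$ (with $a_j \neq 0$) carries homogeneity $j(\alpha_0 + \rho)$. Local subcriticality requires that \emph{every} term appearing in $f(U)$ has homogeneity strictly greater than $\alpha_0$; equivalently, since the minimal such homogeneity among $j = 0, 1, \dots, N$ is what matters, we need $\min_{0 \leq j \leq N} j(\alpha_0 + \rho) > \alpha_0$.

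Next I would split into cases according to the sign of $\alpha_0 + \rho$, which is exactly where conditions (i) and (ii) come from. If $\alpha_0 + \rho > 0$, then $j(\alpha_0+\rho) \geq 0 > \alpha_0$ for all $j \geq 0$ (using $\alpha_0 < 0$, which holds because $\alpha < 0$ and $\kappa > 0$), so subcriticality holds automatically --- this is case (i). If $\alpha_0 + \rho \leq 0$, then $j \mapsto j(\alpha_0+\rho)$ is nonincreasing, so the minimum over $j \in \{0,\dots,N\}$ is attained at $j = N$, giving the requirement $N(\alpha_0 + \rho) > \alpha_0$. When $N \geq 1$ this rearranges to $(N-1)\alpha_0 > -N\rho$, i.e. (dividing by the positive number $N$ and using $\alpha_0 < 0$) $\rho > -\frac{N-1}{N}\alpha_0$, which is case (ii). When $N = 0$, the polynomial $f$ is constant and the only term in $f(U)$ is $a_0\mathbf{1}$ of homogeneity $0 > \alpha_0$, so subcriticality holds unconditionally --- this is case (iii), and it must be listed separately because the formula in (ii) is vacuous/degenerate there.

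Finally I would check the converse direction, namely that if none of (i), (ii), (iii) holds then the equation is not subcritical: assuming $N \geq 1$ and $\alpha_0 + \rho \leq 0$, the failure of (ii) means $\rho \leq -\frac{N-1}{N}\alpha_0$, hence $N(\alpha_0+\rho) \leq \alpha_0$, so the term $U^N$ violates the strict inequality. One small subtlety to flag: in the degenerate sub-case $N = 1$ with $\alpha_0 + \rho \leq 0$, condition (ii) reads $\rho > 0$, which always holds, so (ii) is automatically satisfied and there is nothing to check --- consistent with the fact that a linear equation is always subcritical. I do not expect any serious obstacle here; the only thing requiring care is organizing the case analysis cleanly around the sign of $\alpha_0 + \rho$ and making sure the boundary behavior (strict versus non-strict inequalities, and the $N \in \{0,1\}$ edge cases) is handled so that the ``if and only if'' is genuinely tight.
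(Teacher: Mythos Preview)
Your proposal is correct and follows essentially the same approach as the paper: both split on the sign of $\alpha_0+\rho$, observe that $j\mapsto j(\alpha_0+\rho)$ is monotone so only the extremal $j$ needs checking, and handle $N=0$ separately. Your write-up is arguably a bit cleaner in making the converse direction explicit and in treating the $j=0$ and $N=1$ edge cases, but the underlying argument is the same.
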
 

\begin{proof}
First, consider the cases when the polynomial $f$ is nontrivial with $N>0$.
Starting with the case (i), the homogeneity of the term $U^j$ is 
$j(\alpha_0+\rho)$ so if $j_2>j_1$ and $j_1(\alpha_0+\rho)>\alpha_0$ 
then also $j_2(\alpha_0+\rho)>\alpha_0$. Therefore, we have to check the 
local subcriticality condition only for the minimal degree of $f$, which is 
$N=1$. This implies
\be
\alpha_0+\rho>\alpha_0 \qquad \Leftrightarrow \qquad \rho>0
\ee
and $\rho>0$ holds by assumption. The interesting case occurs when the 
noise is irregular and $\alpha_0+\rho\leq0$, which is covered in case (ii). 
As before, the homogeneity of the term $U^j$ is $j(\alpha_0+\rho)$ but now 
if $j_2>j_1$ and $j_2(\alpha_0+\rho)>\alpha_0$ then 
$j_1(\alpha_0+\rho)>\alpha_0$ so we only have to check the term 
of highest degree which yields the requirement
\be
\label{eq:condp1sub}
\beta=\rho>-\frac{N-1}{N}\alpha_0\;,
\ee
so the result claimed in (ii) follows. The case (iii) is trivially 
subcritical as there are no terms to check.
\end{proof}

The last result shows that, as expected, the case $N=0$ is not really 
of interest from the viewpoint of the theory of regularity structures. 
Therefore, we shall assume from now on that $N\in\N$. Of course, 
Proposition~\ref{prop:genACsub} is not yet a practical result as the 
real answer for the fractional Allen--Cahn-type SPDE is hidden in the 
choice of $\xi$.

\begin{thm}
\label{thm:genACsub1}
Let $\xi$ be space-time white noise and $\fs=(\rho,1,1,\ldots,1)$ with 
$\rho\in(0,2]$. The fractional Allen--Cahn equation~\eqref{eq:AC} 
is locally subcritical if and only if 
\be
\label{eq:wnres}
\rho>  d\frac{N-1}{N+1} =: \rhocrit(N,d)\;.
\ee
\end{thm}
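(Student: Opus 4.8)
The plan is to combine the two preceding results. The scaling lemma leading to \eqref{eq:scaling_whitenoise} identifies the H\"older exponent of space-time white noise under $\fs=(\rho,1,\dots,1)$ as $\alpha=-(\rho+d)/2$, so that the homogeneity assigned to $\Xi$ is $\alpha_0=\alpha-\kappa=-(\rho+d)/2-\kappa$. Proposition~\ref{prop:genACsub} then reduces local subcriticality of~\eqref{eq:AC} to a trichotomy of inequalities; since we have fixed $N\in\N$ (see~\eqref{eq:poly}), case~(iii) is vacuous, and it remains to determine when (i) $\alpha_0+\rho>0$ or (ii) $\rho>-\tfrac{N-1}{N}\alpha_0$ holds.

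First I would show that (i) implies (ii), so that the disjunction collapses to condition~(ii) alone. Indeed $\alpha_0<0$ because $\rho,d,\kappa>0$, and $\tfrac{N-1}{N}\leq 1$ for every $N\geq1$; hence, whenever (i) holds, $-\tfrac{N-1}{N}\alpha_0\leq-\alpha_0<\rho$, which is exactly~(ii). Thus~\eqref{eq:AC} with space-time white noise is locally subcritical if and only if $\rho>-\tfrac{N-1}{N}\alpha_0$.

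The remaining step is a one-line computation. Substituting $\alpha_0=-(\rho+d)/2-\kappa$ into $\rho>\tfrac{N-1}{N}\bigl(\tfrac{\rho+d}{2}+\kappa\bigr)$ and clearing denominators gives $(N+1)\rho>(N-1)(d+2\kappa)$, i.e.
\[
\rho>\frac{N-1}{N+1}\,d+\frac{2(N-1)}{N+1}\,\kappa=\rhocrit(N,d)+\frac{2(N-1)}{N+1}\,\kappa .
\]
Since $\kappa>0$ is an arbitrarily small fixed constant, this inequality is equivalent to the $\kappa$-free condition $\rho>\rhocrit(N,d)$: if $\rho>\rhocrit$ the displayed strict inequality holds once $\kappa$ is small enough, while if $\rho\leq\rhocrit$ it fails for every $\kappa>0$ (for $N=1$ one has $\rhocrit=0$ and subcriticality is automatic on $(0,2]$, consistently). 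This yields~\eqref{eq:wnres}, and it also places the non-subcritical threshold exactly at $\rho=\rhocrit$.

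I do not anticipate a genuine obstacle here: the argument is bookkeeping built on Proposition~\ref{prop:genACsub} and~\eqref{eq:scaling_whitenoise}. The only points needing a moment's care are verifying that the \lq\lq or\rq\rq\ in Proposition~\ref{prop:genACsub} really does reduce to condition~(ii), so that the subcritical region is the single clean half-line $\rho>\rhocrit$ rather than a union of regions; tracking the harmless $O(\kappa)$ shift when passing to the $\kappa$-free statement; and recording the degenerate case $N=1$ together with the standing constraint $\rho\leq2$ (under which condition~(i) is never the binding one as soon as $d\geq2$).
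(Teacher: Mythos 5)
Your proof is correct and follows essentially the same route as the paper: apply the white-noise regularity formula~\eqref{eq:scaling_whitenoise} to identify $\alpha_0$, observe that the disjunction in Proposition~\ref{prop:genACsub} reduces to condition~(ii) (the paper phrases this as condition~(ii) being \emph{weaker} than condition~(i), which is the same observation as your (i)$\Rightarrow$(ii)), and then algebraically rearrange condition~(ii) into the form $\rho>\rhocrit$. Your treatment of the $\cO(\kappa)$ shift and the boundary case $N=1$ is slightly more explicit than the paper's, but the argument is the same.
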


\begin{proof}
With~\eqref{eq:scaling_whitenoise}, Condition 
(ii) from Proposition~\ref{prop:genACsub} becomes
\be
\rho>\frac{(N-1)}{N}\frac{(\rho+d)}{2}\qquad \Leftrightarrow \qquad
\rho>d\frac{N-1}{N+1}\;. 
\ee
This is weaker than the condition $\rho>d$ resulting from Condition (i).
Finally, Condition (iii) is ruled out by assumption.
\end{proof}

It is interesting to apply the condition~\eqref{eq:wnres}
to different dimensions to determine which type of nonlinearity
is allowed based on the range of $\rho$.

\begin{cor}
\label{cor:subAC}
Let $\xi$ be space-time white noise, $\fs=(\rho,1,1,\ldots,1)$ and
$\rho\in(0,2]$. 
Then the subcriticality threshold $\rhocrit$ of the fractional Allen--Cahn-type
equation \eqref{eq:AC} belongs to $(0,2)$ in the following cases:
\begin{itemize}
 \item if $d=1$, $f$ can be an infinite series,
 \item if $d=2$, $f$ must be a finite series,
 \item if $d=3$, $N\leq 4$, 
 \item if $d=4$, $N\leq 2$,
 \item if $d=5$, $N\leq 2$,
 \item if $d\geq 6$, $N=1$.
\end{itemize} 
\end{cor}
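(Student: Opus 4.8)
The plan is to unwind the subcriticality threshold $\rhocrit(N,d)=d\frac{N-1}{N+1}$ from Theorem~\ref{thm:genACsub1} and, for each fixed dimension $d$, ask when $\rhocrit < 2$ as a function of the polynomial degree $N$. Writing the inequality $d\frac{N-1}{N+1}<2$ and clearing the (positive) denominator gives $d(N-1)<2(N+1)$, i.e. $(d-2)N < d+2$. This single linear inequality in $N$ governs everything, so the proof is a short case analysis on the sign of $d-2$.

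First I would treat the degenerate cases. If $d=1$, the inequality becomes $-N<3$, which holds for every $N\in\N$; since $f$ may then have arbitrarily large degree, $f$ can be taken to be an infinite series (consistent with Corollary~\ref{cor:subAC} allowing this for $d=1$). If $d=2$, the inequality reads $0<4$, again true for all $N$, but now $\rhocrit=2\frac{N-1}{N+1}\to 2$ as $N\to\infty$; so $\rhocrit<2$ for every finite $N$, yet no infinite series is admissible because the supremum of the thresholds equals the endpoint $2$. This is the one place where care is needed: the statement distinguishes "infinite series allowed" ($d=1$) from "finite series only" ($d=2$), and the distinction is precisely whether $\sup_N \rhocrit(N,d)$ is strictly below $2$ or equals $2$.

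For $d\geq 3$ we have $d-2>0$, so $(d-2)N<d+2$ is equivalent to $N<\frac{d+2}{d-2}$, i.e. $N\leq \big\lceil \frac{d+2}{d-2}\big\rceil-1$. Now I would simply evaluate $\frac{d+2}{d-2}$ for $d=3,4,5$ and $d\geq 6$: for $d=3$ it is $5$, giving $N\leq 4$; for $d=4$ it is $3$, giving $N\leq 2$; for $d=5$ it is $7/3\approx 2.33$, giving $N\leq 2$; and for $d\geq 6$ we have $\frac{d+2}{d-2}\leq 2$, so the only admissible value is $N=1$. (One should also check that the excluded boundary cases genuinely fail, e.g. $d=3,N=5$ gives $\rhocrit=3\cdot\frac46=2$, not $<2$, and $d=6,N=2$ gives $\rhocrit=6\cdot\frac13=2$; these confirm the listed cutoffs are sharp.) Assembling these cases yields exactly the six bullet points.

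The only genuine obstacle is conceptual rather than computational: making sure the equivalence "subcritical for $\rho\in(0,2]$ with $\rhocrit\in(0,2)$" is stated with the right strictness, and correctly reading off when the family of thresholds $\{\rhocrit(N,d):N\in\N\}$ is bounded away from $2$ versus merely bounded by $2$ — this is what separates the $d=1$ and $d=2$ conclusions. Everything else is a one-line linear inequality followed by plugging in $d=1,2,3,4,5$ and observing monotonicity of $d\mapsto \frac{d+2}{d-2}$ for the tail $d\geq 6$.
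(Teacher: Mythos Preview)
Your proposal is correct and follows essentially the same approach as the paper: both reduce to checking the inequality $\rhocrit(N,d)=d\frac{N-1}{N+1}<2$ directly case by case in $d$. The paper states this as ``a direct calculation using Theorem~\ref{thm:genACsub1}'' and then comments on each dimension in turn; your rewriting as $(d-2)N<d+2$ is a slightly more systematic way of organising the same computation, but the content is identical.
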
 

The proof is a direct calculation using Theorem~\ref{thm:genACsub1}. We briefly
comment on the result. For $d=1$, we can essentially allow for any analytic
function represented as converging Taylor series. For $d=2$, one observes that
the right-hand side of \eqref{eq:wnres} converges to $2$ if $N\ra +\I$ so
only finitely many terms may appear. For $d=3,4$ one checks that the cases
$N=5,3$ are precisely critical requiring $\rho>2$ while for $d=5$ we obtain for
$N=2,3$ the conditions $\rho>5/3$ and $\rho>5/2$. For all other dimensions, only
linear equations are trivially subcritical.\medskip 

In principle, one could now just apply the \lq\lq Metatheorem 8.4\rq\rq\ of
\cite{Hairer1} to obtain the existence and uniqueness of solutions
to~\eqref{eq:AC} from a suitable fixed-point equation. However, this would not
yield any information on the actual elements of the regularity structure and
these elements are crucial to calculate the renormalized SPDE or to determine a
series expansion of the solution. 

\subsection{Index Set and Model Space}
\label{ssec:modelspace}

We assume that the nonlinearity in the SPDE is given by a
polynomial~\eqref{eq:poly} with degree $N$ and fix the natural fractional
scaling $\fs=(\rho,1,1,\ldots,1)$ for $\rho\in(0,2]$. The model space $\cT_F$
adapted to our class of SPDEs is built by enlarging the model space of the
polynomial structure $\bar{\cT}$ by adding symbols and taking into account the
regularity of the noise and the nonlinearity. To each symbol one assigns a
homogeneity $|\cdot|_\fs$, e.g., one sets $|X^k|_\fs:=|k|_\fs$. The noise is
represented by $\Xi$ with homogeneity $|\Xi|_\fs=\alpha_0$. Furthermore, let
$\cI_\rho$ be an abstract integration operator, which increases homogeneity
by $\rho$ by definition, i.e., 
\be
|\cI_\rho(\cdot)|_\fs = |\cdot|_\fs + \rho\;.
\ee
Define a set $\cF$ by declaring $\{{\bf 1},X_i,\Xi\}\subset \cF$, where
${\bf 1}$ is a neutral element for a product to be considered below with $|{\bf
1}|_\fs=0$. Then, postulate that if $\tau,\bar{\tau}\in \cF$ then $\tau
\bar{\tau}\in\cF$ and $\cI_\rho(\tau)\in\cF$. Note that $\tau \bar{\tau}$ and
$\cI_\rho(\tau)$ are then new formal symbols with the natural conventions
understood, e.g., ${\bf 1}(\cdot)=(\cdot)$ and $X_iX_i=X_i^2$. The set $\cF$
contains infinitely many symbols, so that just defining $\cT_\gamma$ by
collecting elements of homogeneity $\gamma$ does not work.\medskip 

For locally subcritical cases of~\eqref{eq:AC}, there exists a recursive
procedure to build a regularity structure containing only finitely many
negatively homogeneous elements by constructing a suitable subset $\cF_F$
depending upon the nonlinearity $f$
via~\eqref{eq:dummy}~\cite[Section~8.1]{Hairer1}. In particular, let
\be
\mathfrak{M}_F:=\{\Xi,U^n:1\leq n\leq N\}
\ee 
i.e., monomials in $\Xi$ and $U$, where $\Xi$ only appears to the power one and
the powers of $U$ are bounded by the polynomial degree of the nonlinearity. If
$A,B\subset \cF$ let 
\be
AB:=\{\tau\bar\tau:\tau\in A,\bar\tau\in B\}\;.
\ee
Set $\cW_0=\{\}=\cU_0$ and recursively define
\begin{align}
\cW_m &= \cW_{m-1}\cup \bigcup_{\cQ\in \mathfrak{M}_F}\cQ(\cU_{m-1},\Xi)\;,
\label{eq:recurse1}\\
\cU_m &= \{X^k\}\cup \{\cI_\rho(\tau):\tau\in \cW_m\}\;, \label{eq:recurse2}
\end{align}
where $k$ runs over all possible multiindices. The notation $\cQ(\cU_{m-1},\Xi)$
also implies that we replace each occurrence of $U$ in a monomial by some
expression from $\cU_{m-1}$. Essentially this recursive construction restricts
the regularity structure to only those symbols necessary for a fixed-point
procedure. If one defines
\be
\label{eq:recurse3}
\cF_F:=\bigcup_{m\geq 0} (\cW_m\cup\cU_m)\;,
\qquad \cU_F:=\bigcup_{m\geq 0}\cU_m\;,
\ee
then $\cF_F$ collects all symbols necessary to represent the equation and
$\cU_F$ all symbols to represent the solution. A very fundamental result about
the construction is that we can now define a regularity structure with suitable
finiteness properties.

\begin{thm}[{\cite[Lem.~8.10]{Hairer1}}] Suppose $\alpha_0<0$. Then the set 
$\{\tau\in\cF_F:|\tau|_\fs \leq \gamma\}$ is finite for every $\gamma\in\R$
if and only if the SPDE is locally subcritical.
\end{thm}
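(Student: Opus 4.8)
\medskip
The plan is to represent each $\tau\in\cF_F$ as a decorated rooted tree --- every edge an instance of the abstract integration $\cI_\rho$, every noise leaf carrying $\Xi$, and some leaves carrying a polynomial decoration $X^k$ --- and to extract from the recursion \eqref{eq:recurse1}--\eqref{eq:recurse2} the grammar it must obey. Writing $\cW:=\bigcup_{m\geq0}\cW_m$ and recalling $\cU_F=\bigcup_{m\geq0}\cU_m$, every element of $\cW$ is either $\Xi$ or a product of at most $N$ elements of $\cU_F$, and every element of $\cU_F$ is either a bare monomial $X^k$ or $\cI_\rho(w)$ with $w\in\cW$. Let $e(\tau)$ be the number of $\cI_\rho$-edges in $\tau$, $\ell(\tau)$ the number of noise leaves, and $p(\tau)\geq0$ the total scaled degree of its polynomial decorations; then the homogeneity splits additively, $|\tau|_\fs=\alpha_0\,\ell(\tau)+\rho\,e(\tau)+p(\tau)$, and since a noise symbol enters a larger tree only via $\cI_\rho(\Xi)$, distinct noise leaves sit below distinct integration edges (unless $\tau=\Xi$), so $\ell(\tau)\leq e(\tau)+1$ always.

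Next I would record the arithmetic restatement of subcriticality. Setting $\delta_N:=(N-1)\alpha_0+N\rho$, Proposition~\ref{prop:genACsub} (with $\rho>0$ taking care of the linear term) shows that \eqref{eq:AC} is locally subcritical if and only if $\delta_N>0$: among $n\in\{1,\dots,N\}$ the quantity $n(\alpha_0+\rho)-\alpha_0=(n-1)\alpha_0+n\rho$ is minimised at $n=N$ when $\alpha_0+\rho<0$ and is automatically positive when $\alpha_0+\rho\geq0$.

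For the ``if'' direction I would assume $\delta_N>0$, set $\varepsilon:=\delta_N/N>0$, note the identities $\alpha_0/N+\varepsilon=\alpha_0+\rho$ and $\alpha_0+\rho-\varepsilon=\alpha_0/N$, and prove by induction over the grammar the twin bounds
\[
|w|_\fs\;\geq\;\alpha_0+\varepsilon\,e(w)\quad\text{for }w\in\cW,\qquad
|u|_\fs\;\geq\;\tfrac{\alpha_0}{N}+\varepsilon\,e(u)\quad\text{for }u\in\cU_F\setminus\bar{\cT}.
\]
The base cases $\Xi$ and $\cI_\rho(\Xi)$ and the integration step $u=\cI_\rho(w)$ are equalities forced by the two identities. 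The one genuinely delicate step --- and the main obstacle --- is the product step $w=u_1\cdots u_n$ with $n\leq N$: discarding the non-negative contribution of any bare-monomial factors and summing the $\cU_F$-bound over the remaining $m\leq n\leq N$ integration-type factors yields $|w|_\fs\geq m\,\alpha_0/N+\varepsilon\,e(w)\geq\alpha_0+\varepsilon\,e(w)$, where the final inequality (tight at $m=N$, using $\alpha_0<0$) is exactly the place where the branching cap $N$ is matched against $\varepsilon$ and where local subcriticality is consumed. It then follows that every non-polynomial $\tau\in\cF_F$ satisfies $|\tau|_\fs\geq\alpha_0+\varepsilon\,e(\tau)$, so the constraint $|\tau|_\fs\leq\gamma$ bounds $e(\tau)$, hence $\ell(\tau)$, hence $p(\tau)$; a bound on $e(\tau)$ caps the number of integration edges and so, through the at-most-$N$-ary branching, the whole tree, while a bound on $p(\tau)$ leaves only finitely many admissible decorations $X^k$. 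Thus $\{\tau\in\cF_F:|\tau|_\fs\leq\gamma\}$ is finite, the bare monomials $X^k$ with $|k|_\fs\leq\gamma$ contributing only finitely many more.

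For the ``only if'' direction I would argue by contraposition: if the SPDE is not subcritical then $\delta_N\leq0$, i.e.\ $N(\alpha_0+\rho)\leq\alpha_0$, which in particular forces $N\geq2$. I would then exhibit the explicit family $\sigma_0:=\Xi$, $\sigma_{j+1}:=\cI_\rho(\sigma_j)^N$; each $\sigma_j$ belongs to $\cF_F$ because $U^N\in\mathfrak{M}_F$, the tree sizes grow strictly so the $\sigma_j$ are pairwise distinct, and $|\sigma_{j+1}|_\fs=N(|\sigma_j|_\fs+\rho)$. Since $h\mapsto N(h+\rho)$ is increasing and sends $\alpha_0=|\sigma_0|_\fs$ to a value $\leq\alpha_0$, the sequence $|\sigma_j|_\fs$ is non-increasing and thus stays $\leq\alpha_0<0$ for all $j$; hence $\{\tau\in\cF_F:|\tau|_\fs\leq0\}$ is infinite and the finiteness property fails. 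The single quantitative point of the whole argument is the product step of the induction above, and it is there that subcriticality enters in an essential way.
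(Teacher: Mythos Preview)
The paper does not provide its own proof of this theorem; it simply quotes the result as \cite[Lem.~8.10]{Hairer1}. Your argument is therefore not comparable to anything in the paper, but it is a correct and well-organised reconstruction of the standard proof.

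A few remarks on the details. Your key inductive inequalities
\[
|w|_\fs\geq\alpha_0+\varepsilon\,e(w)\quad(w\in\cW),\qquad
|u|_\fs\geq\tfrac{\alpha_0}{N}+\varepsilon\,e(u)\quad(u\in\cU_F\setminus\bar{\cT})
\]
are exactly right, and the two algebraic identities you isolate make the integration step an equality, which is neat. The product step is indeed the crux: the inequality $m\alpha_0/N\geq\alpha_0$ for $m\leq N$ is precisely where the branching cap meets subcriticality, and you identify this cleanly. The induction is well-founded because $\cW_m$ is built from $\cU_{m-1}$ and $\cU_m$ from $\cW_m$, so one alternates along $m$. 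Your observation that $\ell(\tau)\leq e(\tau)$ for $\tau\neq\Xi$ (since in the additive-noise recursion every $\Xi$ other than the standalone one sits immediately inside an $\cI_\rho$) is also used implicitly in the paper (Lemma~\ref{lem:bare_decorated} and the surrounding discussion). The finiteness conclusion then follows: bounded $e(\tau)$ gives finitely many bare-tree shapes, bounded $\ell(\tau)$ finitely many noise placements, and bounded $p(\tau)$ finitely many polynomial decorations (lattice points in a simplex).

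For the converse, your family $\sigma_{j+1}=\cI_\rho(\sigma_j)^N$ with the monotone map $h\mapsto N(h+\rho)$ fixing or decreasing from $\alpha_0$ when $\delta_N\leq0$ is the natural explicit witness, and it coincides with the element identified in Lemma~\ref{lem:maxneg} of the paper as the minimiser of homogeneity at each recursion level. Your remark that $\delta_N\leq0$ forces $N\geq2$ (since $\delta_1=\rho>0$) correctly disposes of the degenerate linear case.
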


Therefore, Corollary~\ref{cor:subAC} gives a precise criterion for when we
can expect to be able to define a suitable regularity structure via the key
definition
\be
\cA_F:=\{|\tau|_\fs:\tau\in\cF_F\}\;,
\ee
so that $\cT_{F,\gamma}$ is the set of formal linear combinations of elements in
$\{\tau\in\cF_{F}:|\tau|_\fs=\gamma\}$. 
This constructs $\cA_F,\cT_F$ and we postpone the concrete construction 
and analysis of the group $\cG_F$ to future work (a general abstract 
construction of $\cG_F$ is given in~\cite[Section~8.1]{Hairer1}). We are 
faced with the interesting
question of the actual size of $\cT_F$ for different values of $\rho$. For
space-time white noise, $\rho=2$ and a cubic polynomial
\be
f(u)=u-u^3\;,
\ee
it is well understood how $\cT_F$ is given; see~\cite{HairerWeber} 
or~\cite[Table~1]{BerglundKuehn}. However, viewing the problem as a
three-parameter
family with $\rho\in(0,2]$, $N\in\N$ and $d\in\N$ is not trivial. Define
\be
h_F(N,d,\rho):=|\{\gamma:\exists\tau\in \cF_F \text{ with
}|\tau|_\fs=\gamma<0\}|\;,
\qquad h_F:\N\times \N\times (0,2]\ra \N\;,
\ee
so $h_F$ is the counting map for the number of different negative homogeneities
in the regularity structure. These are the elements of interest as those 
elements make the representation different from classical function
representations
by elements of non-negative homogeneity. The 
homogeneity counting map $h_F$ is smaller or equal to the actual element
counting map
\be
c_F(N,d,\rho):=|\{\tau\in \cF_F:|\tau|_\fs<0\}|\;,
\qquad c_F:\N\times \N\times (0,2]\ra \N\;,
\ee
i.e., $c_F\geq h_F$ and $c_F>h_F$ does usually occur as shown in the next
example. 

\begin{ex}
Let $d=2$, $N=3$ and $\rho=2$. Then for some arbitrarily small $\kappa>0$,
one finds
\be
|\cI_2(\cI_2(\Xi)^2)\cI_2(\Xi)^2|_\fs
= -4\kappa = |\cI_2(\cI_2(\Xi)^3)\cI_2(\Xi)|_\fs\;,
\ee
showing that $c_F(3,2,2) \geqs h_F(3,2,2)+1$.
\end{ex}

\subsection{Counting Homogeneities}
\label{ssec:homogeneities} 

A first step towards finding bounds on $c_F$ and $h_F$ is to determine
which is the element of smallest homogeneity in $\cF_F$.

\begin{lem}
\label{lem:maxneg}
Suppose the SPDE~\eqref{eq:AC} is locally subcritical for space-time white
noise. Then the elements of smallest homogeneity of $\cF_F$ are $\Xi$
and
$\cI_\rho(\Xi)^N$ with 
\be
|\Xi|_\fs=-\frac{\rho+d}{2}-\kappa,\qquad 
|\cI_\rho(\Xi)^N|_\fs=\frac{N}{2}(\rho-d)-\kappa N
\ee  
and $|\Xi|_\fs<|\cI_\rho(\Xi)^N|_\fs$\;.
\end{lem}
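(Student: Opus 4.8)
The plan is to argue by a structural induction that mirrors the recursive construction of $\cF_F$ in \eqref{eq:recurse1}--\eqref{eq:recurse2}, keeping track of how homogeneity can decrease. First I would observe that every $\tau\in\cF_F$ is built from copies of $\Xi$ and $X_i$, combined by products and by the operator $\cI_\rho$, and that each $\cI_\rho$ raises homogeneity by $\rho>0$ while each factor $X_i$ raises it by $1>0$. So to make homogeneity as small (as negative) as possible one should use no $X_i$ factors at all, and one should minimize the total amount of "integration" while still building a legal element. The key quantitative input is that local subcriticality (Theorem~\ref{thm:genACsub1}, equivalently Condition (ii) of Proposition~\ref{prop:genACsub}) gives $\rho > -\tfrac{N-1}{N}\alpha_0$, which I would rewrite as $N(\alpha_0+\rho) > \alpha_0$; this says precisely that replacing the symbol $\Xi$ (homogeneity $\alpha_0$) by $\cI_\rho(\Xi)^N$ (homogeneity $N(\alpha_0+\rho)$) \emph{increases} homogeneity, and more generally that applying $\cI_\rho$ and then raising to a power between $1$ and $N$ never decreases homogeneity below $\alpha_0$.

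The induction would run as follows. Define, for $\tau\in\cW_F$, the claim $P(\tau)$: $|\tau|_\fs \geq \alpha_0 = |\Xi|_\fs$, with equality only for $\tau=\Xi$; and for $\tau\in\cU_F$ the claim $Q(\tau)$: $|\tau|_\fs \geq |\cI_\rho(\Xi)^N|_\fs$ is false in general — instead I would track that any $\cU_F$-element has homogeneity $\geq \alpha_0+\rho$ with the minimum attained at $\cI_\rho(\Xi)$, since $\cI_\rho$ applied to the minimal $\cW_F$-element $\Xi$ gives the minimal $\cU_F$-element. Base case: $\cW_1,\cU_1$ contain $\Xi$, the $X^k$, and $\cI_\rho(\Xi)$; the inequalities are immediate. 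Inductive step: a new element of $\cW_m$ has the form $\cQ(\cU_{m-1},\Xi)$ for some $\cQ\in\mathfrak M_F$, i.e. it is either $\Xi$ itself or a product $\sigma_1\cdots\sigma_j$ with $1\le j\le N$ and each $\sigma_i\in\cU_{m-1}$; by the inductive bound each $|\sigma_i|_\fs \geq \alpha_0+\rho$, so $|\sigma_1\cdots\sigma_j|_\fs \geq j(\alpha_0+\rho)$. Now, using $\alpha_0+\rho$ could be of either sign: if $\alpha_0+\rho\geq 0$ this product has nonnegative homogeneity, certainly $\geq\alpha_0$; if $\alpha_0+\rho<0$, then $j(\alpha_0+\rho)\geq N(\alpha_0+\rho) > \alpha_0$ by subcriticality. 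Either way $|\tau|_\fs > \alpha_0$ for such products, so the minimum over $\cW_F$ stays at $\Xi$. A new element of $\cU_m$ is $\cI_\rho(\tau)$ with $\tau\in\cW_m$, so $|\cI_\rho(\tau)|_\fs = |\tau|_\fs+\rho \geq \alpha_0+\rho$, with equality iff $\tau=\Xi$, i.e. iff $\cI_\rho(\tau)=\cI_\rho(\Xi)$. This closes the induction and shows the minimal $\cW_F$-element is $\Xi$ and the minimal $\cU_F$-element is $\cI_\rho(\Xi)$.

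It remains to identify the minimal element of $\cF_F = \cW_F\cup\cU_F$ and to compute the two homogeneities. Since $|\cI_\rho(\Xi)|_\fs = \alpha_0+\rho$, which is positive or negative depending on parameters, the genuine competitor to $\Xi$ for smallest homogeneity in $\cF_F$ is not a single $\cU_F$-element but the $\cW_F$-element obtained by taking the top power: $\cI_\rho(\Xi)^N$, of homogeneity $N(\alpha_0+\rho)$. Among all products $\sigma_1\cdots\sigma_j$ with $\sigma_i\in\cU_F$, the smallest homogeneity when $\alpha_0+\rho<0$ is achieved by taking $j=N$ and each $\sigma_i$ minimal, i.e. $\sigma_i=\cI_\rho(\Xi)$; this is the $N(\alpha_0+\rho)$ value, and it is the second-smallest homogeneity in $\cF_F$ overall, still strictly above $\alpha_0$. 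Plugging $\alpha_0 = -\tfrac{\rho+d}{2}-\kappa$ from \eqref{eq:scaling_whitenoise} gives $|\Xi|_\fs = -\tfrac{\rho+d}{2}-\kappa$ and $|\cI_\rho(\Xi)^N|_\fs = N(\alpha_0+\rho) = N\bigl(\tfrac{\rho-d}{2}\bigr) - \kappa N = \tfrac{N}{2}(\rho-d)-\kappa N$, as claimed, and the strict inequality $|\Xi|_\fs < |\cI_\rho(\Xi)^N|_\fs$ is exactly $N(\alpha_0+\rho)>\alpha_0$, i.e. subcriticality. The main obstacle is purely bookkeeping: making sure the induction hypothesis is strong enough that the product step goes through without circularity — in particular that one tracks the lower bound $\alpha_0+\rho$ on $\cU_F$-elements (not something weaker), since that is exactly what feeds into $j(\alpha_0+\rho)\geq N(\alpha_0+\rho)$; and one must handle the sign of $\alpha_0+\rho$ by cases, since subcriticality is only needed (and only available) in the case $\alpha_0+\rho\le 0$, cf. Conditions (i) and (ii) of Proposition~\ref{prop:genACsub}.
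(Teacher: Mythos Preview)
Your proposal is correct and follows essentially the same inductive strategy as the paper: induct over the recursion \eqref{eq:recurse1}--\eqref{eq:recurse2}, use that polynomials and $\cI_\rho$ only raise homogeneity, and invoke subcriticality in the form $N(\alpha_0+\rho)>\alpha_0$ to show that the product step cannot push homogeneity below $\alpha_0$. Your version is in fact a bit more carefully organised than the paper's: you explicitly isolate the invariant $|\sigma|_\fs\geq\alpha_0+\rho$ for $\sigma\in\cU_F$ (the paper's proof uses the slightly looser $|\tau|_\fs\geq -(\rho+d)/2$ and is informal about where $\kappa$ goes), and you separate the cases $\alpha_0+\rho\gtrless 0$, which the paper tacitly assumes away since its applications live in the regime $\rho\leq d$.
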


\begin{proof}
We prove the last statement first. We have, using subcriticality,
\be
\label{eq:step1}
-\rho<d\frac{1-N}{1+N}\quad \Leftrightarrow \quad
-\rho-d<\rho N-dN\;,
\ee
so the result follows upon selecting $\kappa$ sufficiently small. The first
statement about minimality now follows essentially by induction. More precisely,
there can be at most $N$ terms in each new symbol assembled from previous
symbols via the recursion~\eqref{eq:recurse1}--\eqref{eq:recurse2}. To minimize
the homogeneity, one may not include any terms involving polynomials, and one
must maximize the negative homogeneity contributions. 
If $\tau$ is the symbol with smallest homogeneity among symbols before applying
$U^N$, then $\tau^N$ minimizes homogeneity if one excludes $\Xi$. The
calculation~\eqref{eq:step1} shows that homogeneity increases from the first to
the second step of the recursion~\eqref{eq:recurse1}--\eqref{eq:recurse2}. This
step can be taken as the base step for induction on the level $k_r$ of the
recursion. Given some element $\tau$ with
\be
|\cI_\rho(\cI_\rho(\tau)^N)|_{\fs}=N(|\tau|_\fs +\rho) {}+ \rho
\ee 
we must prove that
\be
\label{eq:toprove1}
N(|\tau|_\fs +\rho)>|\tau|_\fs\;.
\ee
Subcriticality and the induction assumption $|\tau|_\fs\geq -(\rho+d)/2$
easily imply~\eqref{eq:toprove1}, and this means the element with smallest
homogeneity at step $k_r+1$ that gets adjoined to $\cF_F$ has bigger homogeneity
than $\cI_\rho(\tau)$. The result follows.  
\end{proof}

The last result essentially shows that the type of recursive procedure which is
used to construct regularity structures for additive noise SPDEs with polynomial
nonlinearities also does yield well-defined elements of smallest homogeneity. 

One may hope that considering space-time white noise, which imposes the more 
stringent restriction~\eqref{eq:wnres}, simplifies the combinatorics enough.
The next result shows that the lower bound provided by the homogeneity
counting map $h_F$ could be quite large for many regularity structures even 
without the free choice of $\Xi$ (resp.~$\alpha_0$). 

\begin{prop}
\label{prop:Dio} 
Consider space-time-white noise. Let $\rho=p/q\in\Q$ and let
$h_{\textnormal{Dio}} =h_{\textnormal{Dio}}(N,d,\rho)$ denote the number of
solutions to the system of constrained Diophantine equations
\be
{\bf A}{\bf c}={\bf b}, \qquad \text{with }{\bf A}\in \Z^{3\times 6},\quad {\bf
b}\in \Z^3\;,\quad 
{\bf c}=(c_1,c_2,c_3,c_4,c_5,c_6)^\top,\quad c_j\in\N_0
\ee
where the matrix ${\bf A}$ and integer vector ${\bf b}$ are computable as
\be
{\bf A}=\begin{pmatrix}
  p & 2q & -dq & 1 & 0  & 0\\
  p & 2q & -dq & 0 & -1 & 0\\
  -1 & 0 & 1 & 0 & 0 & 1\\
  \end{pmatrix}\;,\qquad 
  {\bf b}=\begin{pmatrix}
  dq-p \\
  -(N-1)(dq-p)\\
  0\\
  \end{pmatrix}\;.
\ee
Then $h_F=h_{\textnormal{Dio}}+1$.
\end{prop}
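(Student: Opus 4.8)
The plan is to build an (essentially) bijective correspondence between the negative homogeneities of $\cF_F$ distinct from $|\Xi|_\fs$ and the solutions $\mathbf c$ of the displayed system, with $\Xi$ itself accounting for the ``$+1$''. The starting point is the additive form of the homogeneity: by structural induction along the recursion \eqref{eq:recurse1}--\eqref{eq:recurse2}, every $\tau\in\cF_F$ satisfies
\[
|\tau|_\fs \;=\; n_\Xi(\tau)\,\alpha_0 \;+\; n_{\cI}(\tau)\,\rho \;+\; \ell(\tau),
\]
where $n_\Xi(\tau)$ is the number of $\Xi$-leaves, $n_{\cI}(\tau)$ the number of $\cI_\rho$-nodes, and $\ell(\tau)\geqs 0$ the total scaled degree of the polynomial leaves. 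Since $\alpha_0=\alpha-\kappa<0$ while $n_{\cI}\rho+\ell\geqs 0$, a negative homogeneity forces $n_\Xi\geqs 1$; and two symbols share a homogeneity precisely when they share $n_\Xi$ (matching the $\kappa$-coefficient) and share the value $n_{\cI}\rho+\ell$. Writing $\rho=p/q$ with $\gcd(p,q)=1$ and splitting $\ell=\rho\ell_0+\ell'$ with $\ell_0,\ell'\in\N_0$ into its time- and space-weight, the inequality $|\tau|_\fs<0$ --- for $\kappa$ small enough, and with the boundary case producing homogeneity $0^-$ --- is equivalent, after clearing denominators in $n_{\cI}\rho+\ell\leqs -n_\Xi\alpha=n_\Xi(\rho+d)/2$, to a linear inequality in the integer tuple $(n_\Xi,n_{\cI},\ell_0,\ell')$. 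Local subcriticality (Theorem~\ref{thm:genACsub1}), via the chain of estimates underlying Lemma~\ref{lem:maxneg}, is exactly what keeps the set of admissible tuples, and hence $h_{\textnormal{Dio}}$, finite.

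Next I would isolate $\Xi$. Again by induction along \eqref{eq:recurse1}--\eqref{eq:recurse2} one sees that $n_{\cI}(\tau)\geqs n_\Xi(\tau)$ for every $\tau\in\cF_F\setminus\{\Xi\}$: since $\Xi\notin\cU_m$ for any $m$, any $\Xi$-leaf of a symbol other than $\Xi$ sits below at least one $\cI_\rho$. On the other hand $\Xi$ has $n_{\cI}=0<1=n_\Xi$, and by Lemma~\ref{lem:maxneg} its homogeneity $\alpha_0$ is strictly smaller than that of every other negatively homogeneous symbol, so it contributes one further, distinct value. This is the ``$+1$'', and the system excludes it automatically: the third row $-c_1+c_3+c_6=0$ records exactly the relation $n_{\cI}\geqs n_\Xi$ with $c_6=n_{\cI}-n_\Xi$ as slack.

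The substance of the proof is then to match admissible tuples with solutions of $\mathbf A\mathbf c=\mathbf b$. In the forward direction, to $\tau\in\cF_F\setminus\{\Xi\}$ one assigns $(c_1,c_2,c_3)$ read off from $n_{\cI}(\tau)$, the polynomial weights $\ell_0(\tau),\ell'(\tau)$, and $n_\Xi(\tau)$, with $c_4$ and $c_5$ the slacks turning ``$|\tau|_\fs<0$'' and ``$n_{\cI}\rho+\ell$ is at least the minimal value forced at the relevant recursion level (the induction of Lemma~\ref{lem:maxneg})'' into the first two equations, and $c_6$ as above; one then checks that the two sides of each equation coincide after substituting $\alpha=-(\rho+d)/2$ and $\rho=p/q$. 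In the backward direction, from a solution $\mathbf c$ one produces a witnessing symbol by running \eqref{eq:recurse1}--\eqref{eq:recurse2}: take $n_\Xi$ copies of $\cI_\rho(\Xi)$, group them into products of size $\leqs N$ and apply $\cI_\rho$ in the pattern prescribed by $\mathbf c$, inserting the required monomial factors into any product slot left free, and verify that the resulting symbol has the prescribed $(n_\Xi,n_{\cI},\ell_0,\ell')$. It remains to see that the correspondence descends to homogeneities, i.e. that distinct solutions give distinct pairs $(n_\Xi,\,n_{\cI}\rho+\ell)$ and that every negative homogeneity $\neq\alpha_0$ is attained; this yields $h_F-1=h_{\textnormal{Dio}}$.

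I expect the realisability step to be the main obstacle, for two reasons. First, one must characterise exactly which integer tuples are realised by a genuine tree under the branching bound $N$: $n_{\cI}$ cannot be too small relative to $n_\Xi$ (since size-$N$ products are the only way to merge leaves without spending further $\cI_\rho$'s), and polynomial leaves compete with $\cI_\rho$-children for the $N$ available product slots, so the linear system must capture these constraints precisely, admitting no phantom tuples and missing no genuine ones. Second, one must verify injectivity at the level of homogeneities, which in practice means fixing a canonical representative symbol for each admissible pair $(n_\Xi,\,n_{\cI}\rho+\ell)$. The remaining bookkeeping --- choosing $\kappa$ small enough that the boundary homogeneity $0^-$ is counted and no spurious change of sign occurs, and matching slacks against inequalities --- is routine but must be carried out carefully.
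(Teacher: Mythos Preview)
Your strategy --- separate off $\Xi$ via Lemma~\ref{lem:maxneg}, express every other homogeneity as an integer-linear form, then recast the range and structural constraints as a linear Diophantine system with slacks --- is exactly the paper's. The paper, however, chooses different coordinates, and this choice sidesteps much of the realisability difficulty you anticipate.

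The paper does \emph{not} parametrise by the four tree invariants $(n_\Xi,n_\cI,\ell_0,\ell')$. Instead it writes every non-$\Xi$ homogeneity (up to the $-n_\Xi\kappa$ correction) directly as
\[
\tfrac{\rho}{2}\,c_1 + c_2 - \tfrac{d}{2}\,c_3
\qquad\text{with } c_1\geqs c_3\geqs 1,\; c_2\geqs 0\;,
\]
so that in terms of tree data $c_1=2n_\cI+2\ell_0-n_\Xi$, $c_2=\ell'$, $c_3=n_\Xi$. This uses three primary integers, not four, and the only range constraint is Lemma~\ref{lem:maxneg}'s bound $N(\alpha_0+\rho)\leqs|\tau|_\fs\leqs 0$. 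From there the passage to $\mathbf A\mathbf c=\mathbf b$ is mechanical: multiply through by $2q$, introduce slacks $c_4,c_5,c_6$ for the three inequalities, and shift $c_1,c_3$ down by one so that all entries lie in $\N_0$.

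Two of your identifications are therefore slightly off. The third row $-c_1+c_3+c_6=0$ encodes $c_1\geqs c_3$, which in tree data reads $n_\cI+\ell_0\geqs n_\Xi$ --- weaker than your $n_\cI\geqs n_\Xi$, and coinciding with it only when the time-polynomial weight vanishes. And the second slack $c_5$ comes simply from the lower bound $|\tau|_\fs\geqs|\cI_\rho(\Xi)^N|_\fs$ of Lemma~\ref{lem:maxneg}, not from a recursion-depth argument. Your concern that realisability (every admissible $(c_1,c_2,c_3)$ is attained by some tree under the branching bound~$N$) and injectivity at the level of homogeneities need care is legitimate; the paper's proof is deliberately brief on both, and the authors say as much immediately afterwards, noting that ``the main insight \dots\ is not the precise form of the equations but the type of combinatorial problem''. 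So your caution is well placed, but the paper intends the statement at that level of precision rather than as a fully rigorous count.
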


\begin{proof}
By Lemma~\ref{lem:maxneg}, we may restrict to counting homogeneities of elements
$\tau$ with $|\tau|_\fs \geq N(\alpha_0+\rho)$ if we count $\Xi$ separately
which explains the term $+1$ in the claim $h_{\textnormal{Dio}}+1=h_{F}$. The 
remaining homogeneities can be counted by decomposing the recursion steps and
noting that 
\be
|\cI_\rho(\Xi)|_\fs=\frac{\rho}{2}-\frac{d}{2}-\kappa\;.
\ee
Furthermore, $|X_0|_\fs=\rho$, $|X_j|_\fs=1$ for $j\geq 1$, and suitable power
combinations may appear in possible homogeneities. This yields the problem to
find all $c_1,c_2,c_3\in \Z$ such that
\be
\label{eq:basecountprob}
N(\alpha_0+\rho)\leq \frac{\rho}{2}c_1+c_2-c_3\frac{d}{2}\leq 0
\ee
under the constraints $c_1\geq c_3\geq 1$ and $c_2\geq 0$. 
Re-writing~\eqref{eq:basecountprob} as two separate inequalities and using
$\rho=p/q\in\Q$
yields
\benn
pc_1+2qc_2-dqc_3\leq 0\;,\quad pc_1+2qc_2-dqc_3\geq Np-Ndq\;,
\eenn
as well as 
\benn
c_1\geq 1,\quad c_2\geq 0\;,\quad c_3\geq 1\;,\quad c_1-c_3\geq 0\;. 
\eenn
Introducing slack variables $c_4,c_5,c_6$, we get 
\benn
pc_1+2qc_2-dqc_3+c_4= 0\;,\quad pc_1+2qc_2-dqc_3= c_5+Np-Ndq\;, \quad
c_1-c_3-c_6=
0\;.
\eenn
with the remaining constraints unchanged. Shifting $c_1$ and $c_3$ via 
$\tilde{c}_1:=c_1-1$, $\tilde{c}_3:=c_3-1$, re-arranging and dropping 
the tildes yields the result.
\end{proof}

The main insight provided by Proposition~\ref{prop:Dio} is not the precise form
of the equations but the type of combinatorial problem one has to solve. 
In fact, the result already anticipates that classical combinatorial tools, 
e.g.\ using the method of stars-and-bars, are going to be relevant. Furthermore,
the result shows that we cannot expect a closed-form solution for all parameters. 
Hence, we are going to examine the asymptotic behaviour of the homogeneity
counting map $h_F$ as $\rho$ approaches the critical value $\rhocrit$ from
above. To this end, it is useful to introduce a geometric approach. 
Any element $\tau\in\cF_F$ contains a certain number $p(\tau)$ of occurrences of
$\Xi$, a number $q(\tau)$ of occurrences of $\cI_\rho$, and monomials
of total exponent $k\in\N_0^{d+1}$. We will say that $\tau$ is of type
$(p,q,k)$. Its homogeneity is then given by 
\begin{equation}
\label{eq:tau_homogeneity} 
 |\tau|_\fraks = p\alpha_0 + q\rho + |k|_\fraks\;,
\end{equation}
where we recall that 
\begin{equation}
\label{eq:alpha0} 
 \alpha_0 = - \frac{\rho+d}{2} - \kappa\;.
\end{equation}
As a first step, let us consider only elements such that $k=0$. Each
$\tau\in\cF_F$ of this type can be represented by the point $(p,q)\in\N_0^2$. If
for a given $\cU\subset\cF_F$, we let $D_0(\cU)\subset\N_0^2$ be the set of
indices $(p,q)$ of the elements of $\cU$, we are looking for
\begin{equation}
 h_F^0(N,d,\rho) = \bigl|\{(p,q)\in D_0(\cF_F) \colon
 p\alpha_0 + q\rho < 0\}\bigr|\;.
\end{equation}
Obviously, $h_F^0 \leqs h_F \leqs c_F$. 

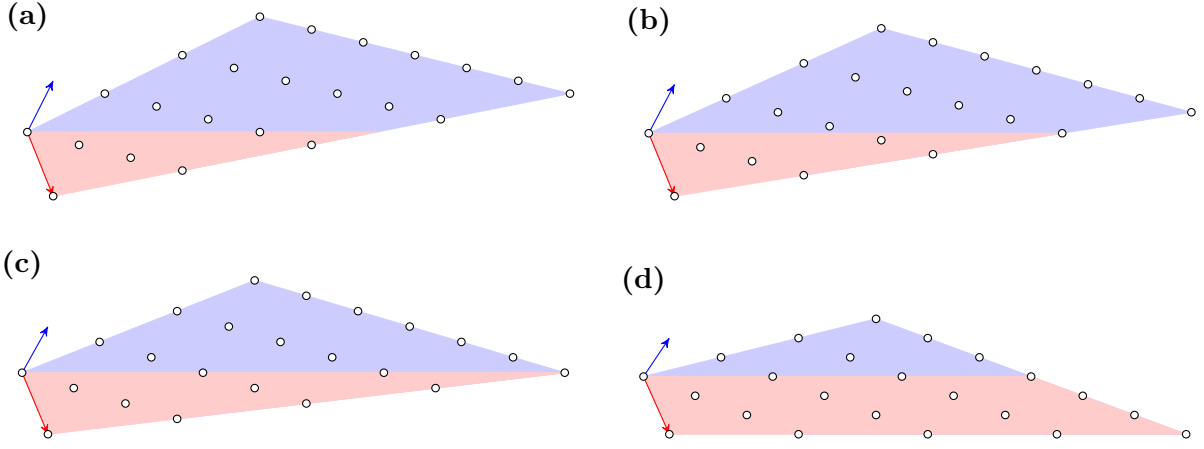
\begin{figure}
\begin{center}
\begin{tikzpicture}[>=stealth',main
node/.style={circle,inner sep=0.035cm,fill=white,draw},scale=0.34,
x={(1,{-2.5})}, y={({1},{4.5})}]

\draw[thin,blue!20,fill=blue!20] (0,0) -- (3,6) -- (9,12) -- (3,3) -- cycle;

\draw[thin,red!20,fill=red!20] (0,0) -- (6,7.5) -- (1,0) -- cycle;

\draw[red,->] (0,0) -- (1,0);
\draw[blue,->] (0,0) -- (0,1);

\foreach \ij in {(1,0), (0,0), (1,1), (1,2), (2,2), (2,3), (2,4), 
(3,3), (3,4), (3,5), (3,6), (4,5), (4,6), (4,7),
(5,6), (5,7), (5,8), (6,8), (6,9), (7,9), (7,10), (8,11), (9,12)}
{
   \node[main node] at \ij {};
}

\node[] at (-1,1) {{\bf (a)}};

\end{tikzpicture}
\hspace{3mm}
\begin{tikzpicture}[>=stealth',main
node/.style={circle,inner sep=0.035cm,fill=white,draw},scale=0.34,
x={(1,{-2.454546})}, y={({1},{4.363637})}]

\draw[thin,blue!20,fill=blue!20] (0,0) -- (3,6) -- (9,12) -- (3,3) -- cycle;

\draw[thin,red!20,fill=red!20] (0,0) -- (7,9) -- (1,0) -- cycle;

\draw[red,->] (0,0) -- (1,0);
\draw[blue,->] (0,0) -- (0,1);

\foreach \ij in {(1,0), (0,0), (1,1), (1,2), (2,2), (2,3), (2,4), 
(3,3), (3,4), (3,5), (3,6), (4,5), (4,6), (4,7),
(5,6), (5,7), (5,8), (6,8), (6,9), (7,9), (7,10), (8,11), (9,12)}
{
   \node[main node] at \ij {};
}

\node[] at (-1,1) {{\bf (b)}};

\end{tikzpicture}

\vspace{5mm}

\begin{tikzpicture}[>=stealth',main
node/.style={circle,inner sep=0.035cm,fill=white,draw},scale=0.34,
x={(1,{-2.4})}, y={({1},{4.2})}]

\draw[thin,blue!20,fill=blue!20] (0,0) -- (3,6) -- (9,12) -- (3,3) -- cycle;

\draw[thin,red!20,fill=red!20] (0,0) -- (9,12) -- (1,0) -- cycle;

\draw[red,->] (0,0) -- (1,0);
\draw[blue,->] (0,0) -- (0,1);

\foreach \ij in {(1,0), (0,0), (1,1), (1,2), (2,2), (2,3), (2,4), 
(3,3), (3,4), (3,5), (3,6), (4,5), (4,6), (4,7),
(5,6), (5,7), (5,8), (6,8), (6,9), (7,9), (7,10), (8,11), (9,12)}
{
   \node[main node] at \ij {};
}

\node[] at (-1,1) {{\bf (c)}};

\end{tikzpicture}
\hspace{3mm}
\begin{tikzpicture}[>=stealth',main
node/.style={circle,inner sep=0.035cm,fill=white,draw},scale=0.34,
x={(1,{-2.25})}, y={({1},{3.75})}]

\draw[thin,blue!20,fill=blue!20] (0,0) -- (3,6) -- (9,12) -- (3,3) -- cycle;

\draw[thin,red!20,fill=red!20] (0,0) -- (6,9) -- (9,12) -- (1,0) -- cycle;

\draw[red,->] (0,0) -- (1,0);
\draw[blue,->] (0,0) -- (0,1);

\foreach \ij in {(1,0), (0,0), (1,1), (1,2), (2,2), (2,3), (2,4), 
(3,3), (3,4), (3,5), (3,6), (4,5), (4,6), (4,7),
(5,6), (5,7), (5,8), (6,8), (6,9), (7,9), (7,10), (8,11), (9,12)}
{
   \node[main node] at \ij {};
}

\node[] at (-1,1) {{\bf (d)}};

\end{tikzpicture}
\vspace{-2mm}
\end{center}
\caption[]{The set $D_0(\cW_3)$ of lattice points $(p,q)$ for elements of
$\cW_3$ with trivial polynomial part, where $p$ is the number of instances of
$\Xi$ and $q$ is the number of instances of $\cI_\rho$. The basis vectors
$(1,0)$ (red) and $(0,1)$ (blue) have been rotated in such a way that the
vertical coordinate gives the homogeneity $p\alpha_0+q\rho$. Parameter values
are $N=d=3$, with {\bf (a)} $\rho=2$, {\bf (b)} $\rho=\frac{21}{11}$, {\bf (c)} 
$\rho=\frac95$ and {\bf (d)} $\rho=\rhocrit(3,3)=\frac32$. The element becoming
negative-homogeneous for $\rho=\frac{21}{11}$ is $(\cI_\rho(\cI_\rho(\Xi)^3))^2
\cI_\rho(\Xi)$ (which is of type $(p,q,k)=(7,9,0)$).}
\label{fig_lattice}
\end{figure}

\figref{fig_lattice} shows the set $D_0(\cW_3)$ for $N=d=3$ and different
values of $\rho$. Note that it is given by the set of lattice points inside
a quadrilateral, and that as $\rho$ decreases, one side of the quadrilateral
becomes aligned with the line of zero homogeneity.

\begin{prop}
For any $N\geqs 2$ one has
\begin{equation}
\label{eq:D0_WF} 
 D_0(\cF_F) = 
 D_0(\cW_\I) = \{(0,0)\} \cup
 \biggl\{(p,q)\in\N\times\N_0 \colon 1 \leqs p \leqs 1 + \frac{N-1}{N}q
\biggr\}\;.
\end{equation}
\end{prop}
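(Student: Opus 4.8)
\medskip\noindent\emph{Plan of proof.} I would prove the two equalities separately. The equality $D_0(\cF_F)=D_0(\cW_\I)$ is purely formal: the recursion \eqref{eq:recurse1}--\eqref{eq:recurse2} is monotone and $\cW_{m+1}\supseteq U^1(\cU_m)=\cU_m$, so $D_0(\cU_m)\subseteq D_0(\cW_{m+1})$ and hence $D_0(\cF_F)=\bigcup_m\bigl(D_0(\cW_m)\cup D_0(\cU_m)\bigr)=\bigcup_m D_0(\cW_m)=D_0(\cW_\I)$. It thus remains to identify $D_0:=D_0(\cW_\I)$ with $\{(0,0)\}\cup R$, where $R:=\{(p,q)\in\N\times\N_0 : Np\leqs N+(N-1)q\}$.

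Next I would dispose of the slice $p=0$. A short induction on the recursion level shows that the only nonzero element of $\cW_\I$ with $p=0$ and trivial polynomial part is ${\bf 1}$: in such a product every factor is either ${\bf 1}$ or $\cI_\rho(\omega)$ with $\omega$ again of this kind, and since $\cI_\rho$ vanishes on the polynomial sector, $\cI_\rho({\bf 1})=0$ forces all factors to equal ${\bf 1}$. As ${\bf 1}\in\cU_1\subseteq\cW_2$, this gives $D_0\cap(\{0\}\times\N_0)=\{(0,0)\}$. For the upper bound on the slice $p\geqs1$ I would use the linear weight $\Phi(\tau):=Np(\tau)-(N-1)q(\tau)$, which is additive under products and satisfies $\Phi(\cI_\rho\tau)=\Phi(\tau)-(N-1)$. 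Another induction on the recursion level shows that every nonzero $\tau\in\cW_\I$ of type $(p,q,0)$ with $p\geqs1$ has $\Phi(\tau)\leqs N$: the base case $\Xi$ has $\Phi=N$; a new element of $\cW_m$ of this type is a product of at most $N$ factors, each either ${\bf 1}$ (with $\Phi=0$) or $\cI_\rho(\omega)$ with $\omega$ of type $(p',q',0)$ and $p'\geqs1$ (by the previous paragraph), so $\Phi(\cI_\rho(\omega))=\Phi(\omega)-(N-1)\leqs1$ by the induction hypothesis, and summing over the factors gives $\Phi(\tau)\leqs N$. This is precisely the defining inequality of $R$, so $D_0\cap(\N\times\N_0)\subseteq R$.

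The converse inclusion $\{(0,0)\}\cup R\subseteq D_0$ is the constructive core. The point $(0,0)$ is ${\bf 1}$ and $(1,0)$ is $\Xi$. For $(p,q)\in R$ with $q\geqs1$, one observes that the $R$-inequality forces $p\leqs q$ (otherwise $N(q+1)\leqs Np\leqs N+(N-1)q$ would give $q\leqs0$), so $I:=q-p+1\geqs1$, and that the $R$-inequality is equivalent to $p\leqs(N-1)I+1$. I would then realize $(p,q)$ by a ``broom'': choose integers $\ell_1,\dots,\ell_{I-1}\in\{0,\dots,N-1\}$ and $\ell_I\in\{1,\dots,N\}$ with $\sum_i\ell_i=p$ (possible exactly because $1\leqs p\leqs(N-1)I+1$, the attainable sums filling the whole range), and set $\sigma_I:=\cI_\rho(\Xi)^{\ell_I}$ and $\sigma_i:=\cI_\rho(\sigma_{i+1})\,\cI_\rho(\Xi)^{\ell_i}$ for $i=I-1,\dots,1$. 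A direct count gives that $\sigma_1$ is of type $\bigl(\sum_i\ell_i,\ \sum_i\ell_i+I-1,\ 0\bigr)=(p,q,0)$, while each $\sigma_i$ is a product of at most $N$ factors drawn from the relevant $\cU_m$, so $\sigma_1\in\cW_{I+1}\subseteq\cW_\I$ and $(p,q)\in D_0$. In the rooted-tree language of the paper (see Figure~\ref{fig_lattice}), $\sigma_1$ is a path of $I$ internal nodes carrying $\ell_i$ extra leaves at the $i$-th node: a tree with $p$ leaves, $q$ edges and out-degree at most $N$ everywhere.

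I expect the last step to be the main obstacle. One must check carefully that the two elementary constraints $p\leqs q$ and $p\leqs(N-1)I+1$ are \emph{equivalent} to membership in $R$, that the exponents $\ell_i$ can always be chosen (a contiguity argument for the attainable values of $\sum_i\ell_i$), and — most importantly — that the formal expression $\sigma_1$ is genuinely produced by \eqref{eq:recurse1}--\eqref{eq:recurse2} at a finite level with branching never exceeding $N$; the degenerate cases $N=2$ and $I=1$ (a single internal node carrying $p\leqs N$ leaves) should be verified by hand. By contrast, the upper bound is a clean induction with the weight $\Phi$, and the reduction to $D_0(\cW_\I)$ is immediate. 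It is also worth recording that the hypothesis $N\geqs2$ is exactly what makes $R$ unbounded in the $q$-direction and the broom construction non-degenerate.
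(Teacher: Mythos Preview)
Your proposal is correct and takes a genuinely different route from the paper. The paper argues geometrically: it computes $D_0(\cW_2)$ and $D_0(\cU_2)$ by hand, observes that $D_0(\cU_m^j)$ contains all lattice points in the convex hull of the dilates $\{(jp,jq):(p,q)\in D_0(\cU_m)\}$ (via cross terms in the product), and then shows by induction on $m$ that $D_0(\cW_m)$ contains all lattice points in a triangle with vertices $(0,0)$, $(1,0)$, $(N^{m-1},\frac{N^m-N}{N-1})$, together with the upper bound $q\geqs\frac{N}{N-1}(p-1)$; the claim follows by letting $m\to\infty$. Your argument is more direct on both sides: the weight $\Phi(\tau)=Np-(N-1)q$ gives the upper bound in one clean induction (the paper's inductive verification of the inequality is comparatively terse), and your explicit ``broom'' (caterpillar) realising each $(p,q)\in R$ replaces the paper's convexity-and-dilation argument by a single concrete element. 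The paper's approach has the incidental benefit of describing $D_0(\cW_m)$ at each finite level, which informs the pictures in Figure~\ref{fig_lattice}; your approach is shorter and makes the role of the degree constraint (branching $\leqs N$) entirely transparent. The ``obstacles'' you flag in the last paragraph are all routine: $p\leqs q$ and $p\leqs(N-1)I+1$ together are equivalent to $Np\leqs N+(N-1)q$ by the substitution $I=q-p+1$, the attainable sums $\sum\ell_i$ form the full integer interval $[1,(N-1)I+1]$ by incrementing one $\ell_i$ at a time, and each $\sigma_i$ is a product of at most $1+(N-1)=N$ factors from $\cU_{I-i+1}$, so $\sigma_1\in\cW_{I+1}$ as you claim.
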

\begin{proof}
The first steps of the iterative
construction~\eqref{eq:recurse1}--\eqref{eq:recurse2} give 
\begin{align*}
\cW_1 &= \bigl\{\Xi\bigr\} \;, \\
\cU_1 &= \bigl\{X^k\bigr\} \cup \bigl\{\cI_\rho(\Xi)\bigr\}\;, \\
\cW_2 &= \bigl\{\Xi\bigr\} \cup \cU_1 \cup \dots \cup \cU_1^N\;.
\end{align*}
The only elements without polynomial part in $\cW_2$ are $\Xi, \cI_\rho(\Xi),
\dots, (\cI_\rho(\Xi))^N$, showing that 
\[
 D_0(\cW_2) 
 = \left\{(0,0), (1,0), (1,1), (2,2), \dots, (N,N)\right\}\;.
\]
The set $\cU_2$ is obtained by applying $\cI_\rho$ to $\cW_2$ and adding
polynomials, so that 
\[
 D_0(\cU_2) 
 = \left\{(0,0), (1,1), (1,2), (2,3), \dots, (N,N+1)\right\}\;.
\]
The point $(0,1)$ has been removed because by definition of the integration
operator, $\cI_\rho({\bf 1})=0$. 
The central observation when constructing $\cW_{m+1}$ from $\cU_m$ is that 
\begin{itemize}
\item 	$D_0(\cU_m^j)$ contains all points $(jp,jq)$ with $(p,q)\in
D_0(\cU_m)$; 
\item 	due to cross terms, $D_0(\cU_m^j)$ also contains all lattice points
in the convex envelope of the above points. 
\end{itemize}
We claim that for any $m\geqs3$, 
\begin{itemize}
\item 	$D_0(\cW_m)$ contains all lattice points in the triangle with vertices 
\[
 (0,0)\;, \quad
 (1,0) \quad \text{and} \quad
 \Bigl(N^{m-1}, \frac{N^m-N}{N-1}\Bigr)\;; 
\]
\item 	$D_0(\cW_m)$ contains the point $(1,m-1)$;
\item 	all points in $D_0(\cW_m)$ satisfy $q\geqs \frac{N}{N-1}(p-1)$.
\end{itemize}
The base case $m=3$ follows easily using the above remarks when constructing
the $D_0(\cU_2^j)$. Indeed, they show that $D_0(\cW_3)$ contains
all lattice points in the quadrilateral with vertices $(0,0)$, $(1,0)$,
$(N^2,N^2+N)$ and $(N,2N)$, cf.~\figref{fig_lattice}. 

The induction step proceeds as follows. First, $D_0(\cU_m)$ is obtained by
shifting $D_0(\cW_m)$ by one step in the $q$-direction, removing the point
$(0,1)$ and adding $(0,0)$. In particular, $D_0(\cU_m)$ contains all the lattice
points in the triangle with vertices $(0,0)$, $(1,1)$ and
$(N^{m-1},\frac{N^m-1}{N-1})$. Next, we see that $D_0(\cW_{m+1})$ contains all
lattice points in the image of this triangle under scaling by a factor $N$, as
well as $(1,0)$, and these points form exactly the triangle required at
induction step $m+1$. Furthermore, $D_0(\cW_{m+1})$ contains $(1,m)$ because
$D_0(\cU_m)$ does, and the inequality for $q$ is satisfied by all points in
$D_0(\cU_m^j)$ with $1\leqs j\leqs N$.

The conclusion follows by taking the limit $m\to\infty$, using again a
convexity argument.
\end{proof}

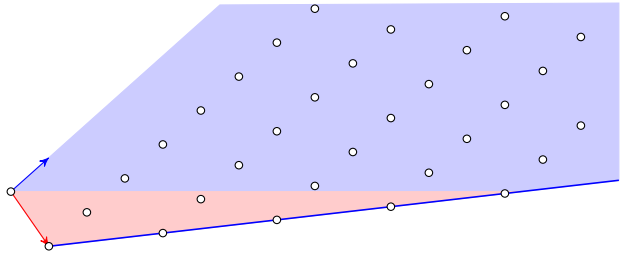
\begin{figure}
\begin{center}
\begin{tikzpicture}[>=stealth',main
node/.style={circle,inner sep=0.035cm,fill=white,draw},scale=0.5,
x={(1,{-1.45})}, y={({1},{2.35})}]

\draw[thin,blue!20,fill=blue!20] (0,0) -- (0,5.5) -- (4,12) -- (6,10) -- (1,0)
-- cycle;

\draw[thin,red!20,fill=red!20] (0,0) -- (5.14286,8.2857) -- (1,0) -- cycle;

\draw[semithick,blue] (1,0) -- (6,10);

\draw[red,->] (0,0) -- (1,0);
\draw[blue,->] (0,0) -- (0,1);

\foreach \ij in {(0,0), 
(1,0), (1,1), (1,2), (2,2), (1,3), (1,4), (1,5), (1,6), (1,7), 
(2,3), (2,4), (2,5), (2,6), (2,7), (2,8), 
(3,4), (3,5), (3,6), (3,7), (3,8), (3,9), (3,10), 
(4,6), (4,7), (4,8), (4,9), (4,10), (4,11), 
(5,8), (5,9), (5,10)}
{
   \node[main node] at \ij {};
}

\end{tikzpicture}
\vspace{-2mm}
\end{center}
\caption[]{The set $D_0(\cF_F)=D_0(\cW_\I)$ for $d=N=2$ and $\rho=0.9$.
There are exactly $7$ pairs $(p,q)$ yielding negative-homogeneous elements,
namely $(1,0)$, $(1,1)$, $(2,2)$, $(2,3)$, $(3,4)$, $(4,6)$ and $(5,8)$. See
also Figure~\ref{fig:02} for the associated elements of the model space.}
\label{fig_N=2}
\end{figure}

The set $D_0(\cF_F)$ is the intersection of a truncated cone with the integer
lattice (\figref{fig_N=2}). 
If $\alpha_0/\rho$ is irrational, the number $h^0_F(N,d,\rho)$ of elements of
negative homogeneity is equal to the number of lattice points in this truncated
cone that lie below the line of slope $-\alpha_0/\rho$ (\figref{fig_D0_D}). If
$\alpha_0/\rho$ is rational, many elements will share the same homogeneity, but
this only occurs on a parameter set of measure zero. Furthermore, as pointed out
in~\cite{Hairer1}, one may always slightly shift $\alpha_0$ to avoid such \lq\lq
resonances\rq\rq.

The number $h^0_F(N,d,\rho)$ diverges as the slope  $-\alpha_0/\rho$ approaches
$N/(N-1)$ (see \figref{fig_D0_D}). This corresponds exactly to $\rho$
approaching the subcriticality threshold 
\begin{equation}
 \rhocrit(N,d) = d \frac{N-1}{N+1}\;.
\end{equation}
We can compute the way in which $h^0_F(N,d,\rho)$ diverges by estimating the
number of lattice points in the part of $D_0(\cF_F)$ below the line
$q=(-\alpha_0/\rho)p$. 

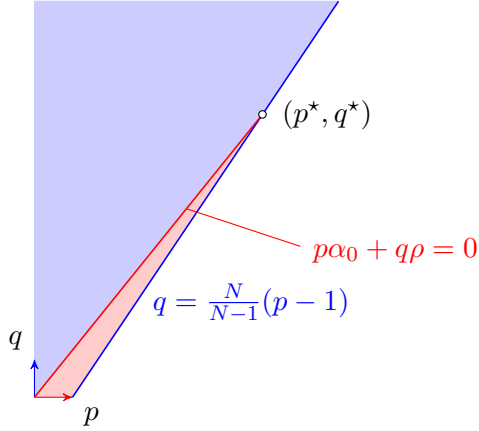
\begin{figure}
\begin{center}
\begin{tikzpicture}[>=stealth',main
node/.style={circle,inner sep=0.035cm,fill=white,draw},scale=0.5]

\draw[thin,blue!20,fill=blue!20] (0,0) -- (1,0) -- (8,10.5) --
(0,10.5) -- cycle;

\draw[thin,red!20,fill=red!20] (0,0) -- (1,0) -- (6,7.5)  -- cycle;

\draw[red,->] (0,0) -- (1,0);
\draw[blue,->] (0,0) -- (0,1);

\draw[semithick, blue] (1,0) -- (8,10.5);
\draw[semithick, red] (0,0) -- (6,7.5);

\draw[thin, red] (4,5) -- (7,4);

\node[] at (1.5,-0.5) {$p$};
\node[] at (-0.5,1.5) {$q$};

\node[red] at (9.5,3.9) {$p\alpha_0 + q\rho=0$};
\node[blue] at (5.7,2.5) {$q = \frac{N}{N-1}(p-1)$};

\node[main node] at (6,7.5) {};
\node[] at (7.7,7.5) {$(p^\star,q^\star)$};

\end{tikzpicture}
\vspace{-5mm}
\end{center}
\caption[]{The set $D_0(\cF_F)$ for $d=N=3$ and $\rho=2$ (lattice points in
the blue and red regions). The red triangular region corresponds to negative
homogeneous elements. It contains at most $\lfloor q^\star \rfloor +1$ lattice
points.}
\label{fig_D0_D}
\end{figure}

\begin{prop}
\label{prop:h0F} 
For any $\rho > \rhocrit$, if $\kappa$ is sufficiently small then 
\begin{equation}
\label{eq:area} 
 \frac{\rho+d}{N+1} \bigl(\rho - \rhocrit\bigr)^{-1}
 \leqs h^0_F(N,d,\rho) 
 \leqs 1 + \frac{(\rho+d)N}{N+1} \bigl(\rho - \rhocrit\bigr)^{-1}\;.
\end{equation} 
\end{prop}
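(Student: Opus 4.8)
My plan is to reformulate $h^0_F$ as the number of lattice points in a thin triangle and then to count those points by slicing with horizontal lines. By the preceding proposition (I take $N\geq 2$; the case $N=1$ is immediate, since then $D_0(\cF_F)=\{(0,0)\}\cup(\{1\}\times\N_0)$ and the count is a one-line computation), a point $(p,q)\in D_0(\cF_F)$ with $(p,q)\neq(0,0)$ satisfies $p\geq 1$ and $q\geq \tfrac{N}{N-1}(p-1)$, while by \eqref{eq:alpha0} its homogeneity $p\alpha_0+q\rho$ is negative exactly when $q<sp$ with $s:=-\alpha_0/\rho=\tfrac{\rho+d+2\kappa}{2\rho}$. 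Hence
\[
 h^0_F(N,d,\rho)=\bigl|\{(p,q)\in\Z^2:\ p\geq 1,\ \tfrac{N}{N-1}(p-1)\leq q<sp\}\bigr|,
\]
and any such point automatically has $q\geq 0$. Geometrically this is the set of lattice points of the half-open triangle with vertex $(1,0)$, one side along the line $q=sp$ through the origin and the opposite side along $q=\tfrac{N}{N-1}(p-1)$; the two slanted sides meet at a point $(p^\star,q^\star)$ with $q^\star=sp^\star=\tfrac{sN}{N-s(N-1)}$.

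Next I would record the elementary equivalence $s<\tfrac{N}{N-1}\Longleftrightarrow\rho>\rhocrit$, so that for $\rho>\rhocrit$ and $\kappa$ small the quantity $q^\star$ is positive and finite. Substituting $s$ and using $\rho(N+1)-d(N-1)=(N+1)(\rho-\rhocrit)$ gives
\[
 q^\star=\frac{N(\rho+d+2\kappa)}{(N+1)(\rho-\rhocrit)-2\kappa(N-1)},
\]
which is monotone increasing in $\kappa$ and converges to $\tfrac{N(\rho+d)}{N+1}(\rho-\rhocrit)^{-1}$ as $\kappa\searrow 0$.

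For the upper bound I would slice the triangle by the lines $q=\text{const}$. For an integer $q\geq 1$ the admissible values of $p$ form the interval $\bigl(q/s,\ 1+\tfrac{N-1}{N}q\bigr]$, whose length is $1-q\bigl(\tfrac1s-\tfrac{N-1}{N}\bigr)<1$, and which is empty unless $q<q^\star$; an interval of length strictly less than $1$ contains at most one integer, so each such row contributes at most one point, while the row $q=0$ contributes exactly $(1,0)$. This yields $h^0_F\leq 1+(\lceil q^\star\rceil-1)=\lceil q^\star\rceil$. For the lower bound I would exhibit explicit negative-homogeneous elements: for every positive multiple $q=jN$ of $N$ with $jN<q^\star$ the pair $\bigl(1+\tfrac{N-1}{N}q,\ q\bigr)$ is an integer point of $D_0(\cF_F)$, and substituting $p=1+\tfrac{N-1}{N}q$ into $q<sp$ shows in one line that its homogeneity is negative precisely because $jN<q^\star$; together with $(1,0)$ this gives $h^0_F\geq \lceil q^\star/N\rceil\geq q^\star/N$. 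Choosing $\kappa$ sufficiently small and inserting the limit from the previous step, $\lceil q^\star\rceil\leq 1+\tfrac{N(\rho+d)}{N+1}(\rho-\rhocrit)^{-1}$ and $q^\star/N\geq \tfrac{\rho+d}{N+1}(\rho-\rhocrit)^{-1}$, which are exactly the two inequalities in \eqref{eq:area}.

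The step I expect to require the most care is the choice of slicing direction together with the width estimate: it is the fact that every horizontal slice has length below one — so contributes at most a single lattice point — that forces the count to be of order $q^\star\asymp(\rho-\rhocrit)^{-1}$ with the sharp constants above; slicing vertically, or bounding crudely by the area of the triangle, would overcount by a factor of order $N$. The rest is routine bookkeeping: the explicit evaluation of $q^\star$, the elementary $\lceil\cdot\rceil$–inequalities through which the hypothesis "$\kappa$ sufficiently small" is consumed, and the harmless remark that for generic $\kappa$ the line $q=sp$ carries no lattice point, so that the open condition $q<sp$ and its closure give the same count.
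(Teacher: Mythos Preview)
Your proof is correct and follows essentially the same approach as the paper's: both identify the negative-homogeneous region as a thin triangle, compute the apex $q^\star$, obtain the upper bound by slicing along constant $q$ and observing that each horizontal slice has width strictly less than~$1$, and obtain the lower bound by exhibiting the explicit points $(1+\tfrac{N-1}{N}q,q)$ for $q$ a multiple of~$N$. Your bounds $h^0_F\leq\lceil q^\star\rceil$ and $h^0_F\geq\lceil q^\star/N\rceil$ differ from the paper's $1+\lfloor q^\star\rfloor$ and $1+\lfloor q^\star/N\rfloor$ only by the usual $\lceil\cdot\rceil$ versus $1+\lfloor\cdot\rfloor$ bookkeeping, and both lead to~\eqref{eq:area} after letting $\kappa$ be small.
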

\begin{proof}
The line $q=(-\alpha_0/\rho)p$ intersects the truncated cone $D_0(\cF_F)$ at a
point $(p^\star,q^\star)$ with coordinates
\begin{align}
\nonumber
 p^\star &= \frac{2\rho N}{\rho(N+1) - d(N-1) -2\kappa(N-1)}
 = \frac{2\rho N}{(\rho - \rhocrit)(N+1) -2\kappa(N-1)}\;, \\
 q^\star &= \frac{(\rho+d+2\kappa) N}{\rho(N+1) - d(N-1) -2\kappa(N-1)}
 = \frac{(\rho+d+2\kappa)N}{(\rho - \rhocrit)(N+1) -2\kappa(N-1)}\;. 
 \label{eq:pstar-qstar} 
\end{align}
The region containing points with negative homogeneity is a triangle as 
shown in \figref{fig_D0_D}. For any $0\leqs q \leqs q^\star$, this triangle
contains all points $(p,q)$ such that 
\begin{equation}
\label{eq:condition_pq} 
 \frac{2\rho}{\rho+d+2\kappa}q < p \leqs \frac{N-1}{N} q + 1\;. 
\end{equation} 
The condition $\rho > \rhocrit$ implies that $p$ lies in an interval of width
strictly less than $1$ if $\kappa$ is small enough, except for $q=0$, where the
width is exactly $1$. If $q=0$, however, only the case $p=1$ corresponds to a
negative homogeneity. Therefore, for each $q$ there is at most one lattice point
in the triangle. On the other hand, the triangle contains at least all points
$(\frac{N-1}{N} q + 1,q)$ for which $q \leqs q^\star$ is a multiple of $N$. It
follows that 
\[
 1 + \biggl\lfloor\frac{q^\star}{N}\biggr\rfloor \leqs h^0_F(N,d,\rho) 
 \leqs 1 + \lfloor q^\star\rfloor 
\]
which implies the bounds~\eqref{eq:area}, taking $\kappa$ sufficiently small. 
\end{proof}

We expect that as $\rho\searrow\rhocrit$, $h^0_F(N,d,\rho)$ will be closer
to the
upper bound in~\eqref{eq:area}, since the red triangle approaches a strip of
constant width $1$. 

\begin{rmk}
\label{remark:pq} 
The proof shows that for $q \leqs q^\star$, there is at most one value
of $p$ such that the corresponding element has a negative homogeneity, given by 
\begin{equation}
\label{eq:pq} 
 p = 1 + \biggl\lfloor\frac{N-1}{N}q\biggr\rfloor\;. 
\end{equation} 
This means that for a given nonzero number of integration operators $\cI_\rho$,
there is at most one choice for the number of symbols $\Xi$ yielding a
negative-homogeneous symbol.  
\end{rmk}

\begin{thm}
\label{thm:hF} 
For any $\rho > \rhocrit$, the homogeneity counting map satisfies
\begin{equation}
\label{eq:hF} 
 \frac{\rho+d}{N+1}\bigl(\rho - \rhocrit\bigr)^{-1}  
 \leqs h_F(N,d,\rho) \leqs 
 1 + \frac{(\rho+d)d N}{N+1}\bigl(\rho - \rhocrit\bigr)^{-1}
\end{equation}
if $\kappa$ is small enough. 
\end{thm}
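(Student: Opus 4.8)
The lower bound needs no work: trivially $h^0_F\leqs h_F$, so the left inequality in~\eqref{eq:hF} is nothing but the left inequality in~\eqref{eq:area}. For the upper bound the plan is to reduce the count of negative homogeneities with an arbitrary polynomial part to the $k=0$ count already controlled in Proposition~\ref{prop:h0F}, paying a factor governed by the dimension. The first step is to observe that the structural constraint underlying the description of $D_0(\cF_F)$ persists for \emph{all} elements: every $\tau\in\cF_F$ satisfies $p(\tau)\leqs 1+\tfrac{N-1}{N}q(\tau)$, where $p(\tau)$ and $q(\tau)$ count all occurrences of $\Xi$ and of $\cI_\rho$ in $\tau$. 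I would prove this by the same induction on~\eqref{eq:recurse1}--\eqref{eq:recurse2} used above: apart from $\Xi$ itself, every element of $\cW_m$ is a product $X^k\prod_{i=1}^{\ell}\cI_\rho(\sigma_i)$ with $\ell\leqs N$ and $\sigma_i\in\cW_{m-1}$; the polynomial factor changes neither $p$ nor $q$, one has $p=\sum_i p(\sigma_i)$ and $q=\ell+\sum_i q(\sigma_i)$, and the inequality is preserved precisely because $\ell\leqs N$, while applying $\cI_\rho$ only increases $q$.

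Next I would absorb the time-direction exponent: for $\tau$ of type $(p,q,k)$ write $|k|_\fs=\rho k_0+s$ with $s:=k_1+\dots+k_d\in\N_0$, so that by~\eqref{eq:tau_homogeneity} one has $|\tau|_\fs=p\alpha_0+\bar q\rho+s$ with $\bar q:=q+k_0$, while Step~1 still gives $p\leqs 1+\tfrac{N-1}{N}\bar q$. A negative-homogeneous $\tau$ has $p\geqs1$ (pure polynomials and $\cI_\rho$ of polynomials are non-negative) and $p\alpha_0+\bar q\rho<0$, so the pair $(p,\bar q)$ satisfies the defining constraints of $D_0(\cF_F)$ and lies below the line of slope $-\alpha_0/\rho$; hence, exactly as in the proof of Proposition~\ref{prop:h0F} (the interval in~\eqref{eq:condition_pq} has width $<1$ once $\rho>\rhocrit$ and $\kappa$ is small), for each $\bar q$ there is at most one admissible $p$, and $\bar q\leqs\lfloor q^\star\rfloor$ with $q^\star$ as in~\eqref{eq:pstar-qstar}. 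For a fixed admissible pair the negative homogeneities it contributes are $p\alpha_0+\bar q\rho+s$ with $0\leqs s<-(p\alpha_0+\bar q\rho)$, hence number at most $\lceil-(p\alpha_0+\bar q\rho)\rceil$, and from $p\leqs 1+\tfrac{N-1}{N}\bar q$ and $\rho>\rhocrit$,
\[
 -(p\alpha_0+\bar q\rho)\;\leqs\;-\alpha_0-m\,\bar q\;=\;m(q^\star-\bar q),\qquad
 m:=\rho+\tfrac{N-1}{N}\alpha_0=\tfrac{(N+1)(\rho-\rhocrit)-2(N-1)\kappa}{2N}>0,
\]
with $m\,q^\star=-\alpha_0$ a direct consequence of~\eqref{eq:pstar-qstar}. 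Summing over $\bar q$ then yields $h_F\leqs\sum_{\bar q=0}^{\lfloor q^\star\rfloor}\lceil m(q^\star-\bar q)\rceil$, a triangular sum of size $\tfrac12 m(q^\star)^2+O(q^\star)=\tfrac12(-\alpha_0)q^\star+O(q^\star)$ (equivalently one may group the same estimate by $s$, noting that for each $s$ the admissible $\bar q$ run up to $(-\alpha_0-s)/m$). Inserting $-\alpha_0=\tfrac{\rho+d}{2}+\kappa$ and the value of $q^\star$, and letting $\kappa\searrow0$ so that $q^\star\to\tfrac{(\rho+d)N}{(N+1)(\rho-\rhocrit)}$, one is left to check that the right-hand side is at most $1+\tfrac{(\rho+d)dN}{N+1}(\rho-\rhocrit)^{-1}$; the passage from the $(\rho+d)^2$ produced by the quadratic sum to the $(\rho+d)d$ in the claim uses $\rho\leqs2$, i.e.\ $\rho+d\leqs 3d$.

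The geometry and the recursion make the first two steps routine; I expect the real work to be the constant-chasing in the last step, and especially the low-dimensional cases, where the lower-order terms of the triangular sum obstruct the bound $\tfrac12(-\alpha_0)+O(1)\leqs d$. These must be treated directly. For $d=1$ one in fact has $h_F=h^0_F$: if $\rho>d$ then (for $\kappa$ small) $\alpha_0+\rho>0$ and only $\Xi$ is negative-homogeneous, while if $\rho\leqs d$ the inequality $m\,\bar q<-\alpha_0-s$ obtained above, combined with $\ell\leqs N$ and $m>0$, forces $s=0$; in both cases the $h^0_F$-bound of~\eqref{eq:area} already gives the claim (with $d=1$). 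The remaining dimensions $d=2$ (and the degenerate case $N=1$, where $\cF_F$ contains no products at all) are handled by hand: one checks that the polynomial part contributes at most one additional value of $s$ and that $q^\star\geqs\tfrac{\rho+d}{2}$, which is enough; for $d\in\{3,4,5\}$ the generic estimate of the second step already suffices.
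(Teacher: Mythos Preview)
Your argument is correct, and the lower bound is indeed identical to the paper's. The upper-bound strategy, however, differs genuinely from the one in the paper, and it is worth recording the contrast.

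The paper keeps the pair $(p,q)$ fixed in $D_0(\cF_F)$ and bounds, \emph{uniformly} over all such pairs, the number
\[
 m(p,q)=\bigl|\{(r,s)\in\N_0^2 \colon r\rho+s<(-\alpha_0)p-\rho q\}\bigr|
\]
of admissible polynomial homogeneities by $d$; the upper bound then follows in one line from $h_F\leqs d\,(h^0_F-1)+1$ together with Proposition~\ref{prop:h0F}. You instead split the polynomial exponent as $|k|_\fs=\rho k_0+s$, merge $k_0$ into $\bar q=q+k_0$, and bound the number of admissible $s$ for each $(p,\bar q)$ by the \emph{non-uniform} quantity $\lceil m(q^\star-\bar q)\rceil$; summing over $\bar q$ gives a triangular sum whose leading part is $\tfrac12(-\alpha_0)q^\star\sim\tfrac{(\rho+d)^2N}{4(N+1)}(\rho-\rhocrit)^{-1}$. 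Your decomposition is more refined near the apex (you see that most $(p,\bar q)$ carry very few $s$-values), but the ceilings produce a lower-order term of size $q^\star$ that forces the case split you describe: the inequality $\tfrac{\rho+d}{4}+1\leqs d$ needed to land inside~\eqref{eq:hF} holds only for $d\geqs2$ (with equality at $d=2$, $\rho=2$), so $d=1$ must be treated separately --- which you do correctly by showing $s=0$ is forced there. The paper's uniform bound avoids this bookkeeping entirely and delivers the factor $d$ directly, at the price of a cruder estimate far from the apex; both routes give the same asymptotic order and neither yields a sharp constant (as the paper itself notes).

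One small point: in your $d=2$ endgame you effectively revert to the paper's method (a uniform bound of two $s$-values per pair), which is fine but means your triangular-sum machinery is only really used for $d\geqs3$. You could instead recover $d=2$ from the triangular sum by observing that the $\bar q=0$ term contributes only $\Xi$, not $\lceil-\alpha_0\rceil$ values, which removes exactly the offending constant.
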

\begin{proof}
The lower bound follows directly from Proposition~\ref{prop:h0F}. To obtain an
upper bound, we have to control the number 
\[
 m(p,q) = \bigl| \left\{ (r,s)\in\N_0^2 \colon r\rho +  s < \theta(p,q) :=
(-\alpha_0) p - \rho q \right\}\bigr|
\]
of possible homogeneities obtained by adding polynomial terms to an element of
$D_0(\cF_F)$. Remark~\ref{remark:pq} shows that $p \leqs 1+\frac{N-1}{N}q$.
Using the definition of $\alpha_0$ and $\rho>\rho_c$, it follows that 
\[
 \theta(p,q) \leqs \frac{\rho+d}{2} + \kappa
\]
uniformly in $(p,q) \in D_0(\cF_F)$. Using again that $\rho>\rhocrit$ and 
$r\rho+s<\theta(p,q)$, we see that this imposes 
\[
 r < \frac12 \biggl( 1 + \frac{d}{\rho} \biggr) + \kappa 
 < \frac{N}{N-1} + \kappa\;.
\]
If $N\geqs3$ then this enforces $r\in\{0,1\}$ if $\kappa$ is small enough.
If $r=0$ then one must have $s \leqs (\rho+d)/2$ while if $r=1$ then
$r\leqs(d-\rho)/2$. Therefore 
\[
m(p,q) \leqs d\;.
\]
If $N=2$ then $r=2$ is also allowed, yielding $m(p,q)\leqs 3(d-\rho)/2$.
However, the condition $\rho>\rhocrit=d/3$ then implies $m(p,q)\leqs 3d/4\leqs
d$. Since the only element appearing for $q=0$ is $\Xi$, which is never
multiplied by a polynomial, we can bound $h_F$ by $(\rho+d)(h^0_F-1)+1$, which
yields the result.
\end{proof}

\begin{rmk}
The upper bound in~\eqref{eq:hF} is not sharp. In particular, it is possible to
obtain a sharper bound on $\theta(p,q)$ proportional to $q^\star-q$.
Furthermore, the uniform upper bound on $m(p,q)$ overestimates its actual
value. However, this will only affect the numerical constant in front of $(\rho
- \rhocrit\bigr)^{-1}$. 
\end{rmk}

\subsection{Counting Negative-Homogeneous Elements}

The next result indicates the 
complexity of the element counting map $c_F$ for an arbitrary noise.

\begin{prop}
\label{prop:bifalways}
Given any $\rho\in(0,2]$, $d\geq 2$ and $N\in \N$, there exists a noise
$\xi$ with negative H\"older regularity and $\tau\in \cF_F$ such that
$|\tau|_\fs=0$ and the stochastic fractional Allen--Cahn equation~\eqref{eq:AC}
is locally subcritical.
\end{prop}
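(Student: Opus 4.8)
\medskip\noindent
The plan is to exploit the freedom in the choice of noise: I would tune the H\"older exponent $\alpha$ of $\xi$ so that the symbol $\cI_\rho(\Xi)$ lands exactly at homogeneity $0^-$, and then check that the resulting noise is still compatible with local subcriticality. Recall from Section~\ref{ssec:modelspace} that a noise of H\"older regularity $\alpha$ contributes the homogeneity $\alpha_0=\alpha-\kappa$ to the structure.

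First I would set $\alpha=-\rho$, which is negative as required, and take $\xi$ to be a noise of H\"older regularity $\alpha$. Such a noise exists for every admissible $(\rho,d)$ with $d\geqs 2$: since $\rho\in(0,2]$ one has $-\rho\geqs-\tfrac{\rho+d}{2}$, i.e.\ $\alpha$ is not rougher than space-time white noise, whose regularity $-\tfrac{\rho+d}{2}$ was computed in~\eqref{eq:scaling_whitenoise}; hence one may take, for instance, a suitable Gaussian noise that is white in time with a homogeneous spatial covariance, and its regularity is read off by the scaling argument used to prove~\eqref{eq:scaling_whitenoise}. (For $d=1$ and $\rho>1$ the exponent $-\rho$ lies strictly below the white-noise threshold, which is why the hypothesis $d\geqs 2$ is assumed.)

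Next I would apply Proposition~\ref{prop:genACsub} with $\alpha_0=-\rho-\kappa$. Case~(iii) is excluded since $N\in\N$, case~(i) ($\alpha_0+\rho>0$) fails, and case~(ii) reads
\[
 \rho\;>\;-\tfrac{N-1}{N}\,\alpha_0\;=\;\tfrac{N-1}{N}\,(\rho+\kappa)\,,
\]
which is equivalent to $\rho>(N-1)\kappa$ and therefore holds for every sufficiently small $\kappa>0$ (as $\rho>0$ is fixed). Hence~\eqref{eq:AC} is locally subcritical for this choice of noise.

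Finally I would exhibit the element. The first step of the recursion~\eqref{eq:recurse1}--\eqref{eq:recurse2} gives $\cW_1=\{\Xi\}$ and $\cU_1=\{X^k\}\cup\{\cI_\rho(\Xi)\}$, so $\tau:=\cI_\rho(\Xi)\in\cU_1\subset\cF_F$ and
\[
 |\tau|_\fs\;=\;|\Xi|_\fs+\rho\;=\;\alpha_0+\rho\;=\;-\kappa\,,
\]
i.e.\ $\tau$ has homogeneity $0^-$ (that is, $0-\kappa$) for any sufficiently small $\kappa$, which is the claim. The one genuinely non-routine point is the first: for every $(\rho,N,d)$ with $d\geqs 2$ one must actually construct a noise of H\"older regularity exactly $-\rho$, and this is precisely where $d\geqs 2$ enters. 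It also explains why $\alpha$ can be neither pushed much lower (large $|\alpha|$ breaks case~(ii)) nor raised to $0$ (one would lose an element at $0^-$): $\alpha=-\rho$ is the unique exponent making the simplest non-polynomial symbol of $\cF_F$, namely $\cI_\rho(\Xi)$, sit at homogeneity $0^-$. The remaining steps are the one-line homogeneity computation above together with an application of Proposition~\ref{prop:genACsub}.
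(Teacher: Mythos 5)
Your choice of witness $\tau=\cI_\rho(\Xi)$ is genuinely simpler than the paper's, which takes $\tau=\cI_\rho(\cI_\rho(\Xi)^N)$ and then sets $\alpha_0=-(1+1/N)\rho$ so that $|\tau|_\fs=N(\rho+\alpha_0)+\rho=0$ exactly; your witness has the advantage of not involving the nonlinearity at all, and both witnesses are constructible at the first or second iteration of the recursion, so both serve.

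However, as written your proof misses the stated conclusion by an $\cO(\kappa)$: you set the noise regularity $\alpha=-\rho$, so that $\alpha_0=\alpha-\kappa=-\rho-\kappa$ and $|\tau|_\fs=\alpha_0+\rho=-\kappa\neq 0$. The proposition asks for $|\tau|_\fs=0$ exactly (the paper's own proof achieves this, since it prescribes $\alpha_0$ directly, i.e.\ takes $\alpha=-(1+1/N)\rho+\kappa$, which is still negative for small $\kappa$). Your argument is repaired by one keystroke: choose $\alpha=-\rho+\kappa$ instead of $\alpha=-\rho$; then $\alpha_0=-\rho$, $|\tau|_\fs=0$ exactly, and subcriticality (condition (ii) of Proposition~\ref{prop:genACsub}) becomes $\rho>\frac{N-1}{N}\rho$, which is automatic. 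So the gap is real but cosmetic.

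The explanation you offer for the hypothesis $d\geqs 2$ — that a noise of regularity $-\rho$ only exists when it is no rougher than space-time white noise — is speculative and not supported by the paper: the statement only requires \emph{some} noise of negative Hölder regularity, not a Gaussian noise interpolating between white noise and something smoother, and indeed the paper's own choice $\alpha_0=-(1+1/N)\rho$ is rougher than the white-noise level $-(\rho+d)/2$ in a nonempty parameter range (e.g.\ $N=2$, $d=2$, $\rho$ near $2$), without any apology. The paper's proof of Proposition~\ref{prop:bifalways} in fact never invokes $d\geqs 2$, so you should not attach significance to that hypothesis in the way you do.
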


\begin{proof}
Let $\tau:=\cI_\rho(\cI_\rho(\Xi)^{N})$ and observe that $\tau$ is 
constructible by the recursion~\eqref{eq:recurse1}--\eqref{eq:recurse2}. 
Calculating homogeneity yields
\benn
|\tau|_\fs = N(\rho+\alpha_0)+\rho.
\eenn
Now take $\alpha_0 = -(1+1/N)\rho$ to obtain $|\tau|_\fs=0$. Subcriticality
follows since $1>1-1/N^2$ implies $\rho>\alpha_0(1-N)/N$.
\end{proof}

Proposition~\ref{prop:bifalways} implies that the counting map $c_F$ can
actually produce a jump for every given fixed rational number if the noise,
still with negative H\"older regularity, is chosen suitably. To avoid this
significant complexity of \lq\lq bifurcations at any $\rho$\rq\rq\ (i.e.~new
elements appearing in the regularity structure upon parameter variation at any
$\rho$) it is reasonable to specialize the analysis to certain subclasses of
noise. Hence, we now particularise to the case of space-time white noise. Recall
that we say that an element $\tau$ of $\cF_F$ is of type
$(p,q,k)\in\N_0\times\N_0\times\N_0^{d+1}$ if it contains $p$ occurrences of
$\Xi$, $q$ occurrences of the integration operator $\cI_\rho$ and monomials of
total exponent $k$. The discussion in the previous subsection shows that if
$|\tau|_\fraks < 0$, then 
\begin{itemize}
\item 	$q$ is bounded by a number $q^\star$ of order
$(\rho-\rhocrit)^{-1}$ (cf.~\eqref{eq:pstar-qstar});
\item 	there is at most one value of $p$ for a given $q$, namely 
$p=1 + \lfloor \frac{N-1}{N} q \rfloor$. 
\end{itemize}
As described in~\cite{Hairer4}, each $\tau\in\cF_F$ can be represented by a
rooted tree with additional decorations. There are two types of edges, one of
them standing for the symbol $\cI_\rho$, and the other one representing
$\Xi$. Each vertex is decorated with an $\N^{d+1}$-valued label, representing
$X^k$, while multiplication of two symbols is denoted by concatenating the
corresponding trees at the root (\figref{fig_trees}a). 

If $\tau\in\cF_F$ is of type $(p,q,k)$, then it is represented by a decorated
tree with $p$ leaves and $p+q$ edges, where $p$ edges are of type $\Xi$ and
adjacent to a leaf, while $q$ edges are of type $\cI_\rho$. The tree has $p+q+1$
vertices, including the $p$ leaves, the root, and $q$ inner vertices. Each
vertex has at most degree $N+1$, except the root which has at most degree $N$. 

Since multiplication of symbols is commutative, the order of edges around any
vertex does not matter. Therefore the problem of estimating the element counting
map $c_F$ essentially amounts to counting, for each admissible $(p,q)$, the
number of \emph{non-homeomorphic} rooted trees satisfying the above constraints.
For counting purposes, it will also be useful to consider the~\emph{bare tree},
obtained by stripping a decorated tree of all $p$ edges of type $\Xi$ and
adjacent leaves, which has $q$ edges and $q+1$ vertices (\figref{fig_trees}b). 

\begin{figure}
\begin{center}
\begin{tikzpicture}[>=stealth',
root/.style={semithick,blue,circle,inner sep=0.05cm,fill=white,draw},
I node/.style={semithick,blue,circle,inner sep=0.05cm,fill=blue,draw},
Xi node/.style={semithick,teal,circle,inner sep=0.05cm,fill=white,draw},
I edge/.style={blue,semithick},
Xi edge/.style={teal,semithick,decorate, decoration={snake,segment
length=4pt,amplitude=2pt}},
scale=0.5
]

\draw[I edge] (-1,-1) -- (0,0) -- (1,-1) -- (1,-2) -- (0,-3);
\draw[I edge] (1,-2) -- (2,-3) -- (1,-4);
\draw[I edge] (2,-3) -- (3,-4);

\draw[Xi edge] (-1,-1) -- (-1,-2);
\draw[Xi edge] (0,-3) -- (0,-4);
\draw[Xi edge] (1,-4) -- (1,-5);
\draw[Xi edge] (3,-4) -- (3,-5);

\node[root] at (0,0) {};
\node[I node] at (-1,-1) {};
\node[I node] at (1,-1) {};
\node[I node] at (1,-2) {};
\node[I node] at (0,-3) {};
\node[I node] at (2,-3) {};
\node[I node] at (1,-4) {};
\node[I node] at (3,-4) {};

\node[Xi node] at (-1,-2) {};
\node[Xi node] at (0,-4) {};
\node[Xi node] at (1,-5) {};
\node[Xi node] at (3,-5) {};

\node[] at (-3,0.5) {{\bf (a)}};

\end{tikzpicture}
\hspace{15mm}
\begin{tikzpicture}[>=stealth',
root/.style={semithick,blue,circle,inner sep=0.05cm,fill=white,draw},
I node/.style={semithick,blue,circle,inner sep=0.05cm,fill=blue,draw},
Xi node/.style={semithick,red,circle,inner sep=0.05cm,fill=white,draw},
I edge/.style={blue,semithick},
Xi edge/.style={red,semithick,decorate, decoration={snake,segment
length=4pt,amplitude=2pt}},
scale=0.5
]

\draw[I edge] (-1,-1) -- (0,0) -- (1,-1) -- (1,-2) -- (0,-3);
\draw[I edge] (1,-2) -- (2,-3) -- (1,-4);
\draw[I edge] (2,-3) -- (3,-4);

\draw[white] (3,-4) -- (3,-5);
\node[white] at (3,-5) {};

\node[root] at (0,0) {};
\node[I node] at (-1,-1) {};
\node[I node] at (1,-1) {};
\node[I node] at (1,-2) {};
\node[I node] at (0,-3) {};
\node[I node] at (2,-3) {};
\node[I node] at (1,-4) {};
\node[I node] at (3,-4) {};

\node[] at (-3,0.5) {{\bf (b)}};

\end{tikzpicture}
\hspace{15mm}
\begin{tikzpicture}[>=stealth',
root/.style={semithick,blue,circle,inner sep=0.05cm,fill=white,draw},
I node/.style={semithick,blue,circle,inner sep=0.05cm,fill=blue,draw},
Xi node/.style={semithick,teal,circle,inner sep=0.05cm,fill=white,draw},
I edge/.style={blue,semithick},
Xi edge/.style={teal,semithick,decorate, decoration={snake,segment
length=4pt,amplitude=2pt}},
scale=0.5
]

\draw[I edge] (-1,-1) -- (0,0) -- (1,-1) -- (2,-2) -- (1,-3);
\draw[I edge] (2,-2) -- (3,-3) -- (2,-4);
\draw[I edge] (3,-3) -- (4,-4);

\draw[Xi edge] (1,-1) -- (0,-2);
\draw[Xi edge] (-1,-1) -- (-1,-2);
\draw[Xi edge] (1,-3) -- (1,-4);
\draw[Xi edge] (2,-4) -- (2,-5);
\draw[Xi edge] (4,-4) -- (4,-5);

\node[root] at (0,0) {};
\node[I node] at (-1,-1) {};
\node[I node] at (1,-1) {};
\node[I node] at (2,-2) {};
\node[I node] at (1,-3) {};
\node[I node] at (3,-3) {};
\node[I node] at (2,-4) {};
\node[I node] at (4,-4) {};

\node[Xi node] at (0,-2) {};
\node[Xi node] at (-1,-2) {};
\node[Xi node] at (1,-4) {};
\node[Xi node] at (2,-5) {};
\node[Xi node] at (4,-5) {};

\node[] at (-3,0.5) {{\bf (c)}};

\end{tikzpicture}
\vspace{-4mm}
\end{center}
\caption[]{{\bf (a)} Decorated tree representing
$\tau=\cI_\rho(\Xi)
\cI_\rho(\cI_\rho(\cI_\rho(\Xi)\cI_\rho(\cI_\rho(\Xi) ^2)))$, which is of type
$(4,7,0)$ and has degree vector $d(\tau)=(4,6,2)$; {\bf (b)} the associated bare
tree, whose degree vector is $d'(\tau)=(4,2,2)$. The root is denoted \tikz{\draw
node[semithick,draw=blue,fill=white,shape=circle,inner sep=0.05cm] {};}, while
each \tikz{\draw[thick,blue] (0,0) -- (0.7,0); \draw
node[semithick,draw=blue,fill=blue,shape=circle,inner sep=0.05cm] at (0.7,0)
{};} represents a symbol $\cI_\rho$, and each
\tikz{\draw[semithick,teal,decorate, decoration={snake,segment
length=4pt,amplitude=2pt}] (0,0) -- (0.7,0); \draw
node[semithick,draw=teal,fill=white,shape=circle,inner sep=0.05cm] at
(0.7,0) {};} represents a symbol $\Xi$. {\bf (c)} Another decorated tree,
representing $\tau=\cI_\rho(\Xi)
\cI_\rho(\Xi\cI_\rho(\cI_\rho(\Xi)\cI_\rho(\cI_\rho(\Xi) ^2)))$, which also
corresponds to the bare tree {\bf (b)}. However, such a tree cannot occur for
purely additive noise. 
}
\label{fig_trees}
\end{figure}

\begin{lem}
\label{lem:bare_decorated}
There is a one-to-one correspondence between bare and decorated trees.
Furthermore, any bare tree of maximal degree $N+1$ and root of maximal degree
$N$ represents an element constructible by the recursive
procedure~\eqref{eq:recurse1}--\eqref{eq:recurse2}. 
\end{lem}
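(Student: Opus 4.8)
The plan is to prove both assertions by exploiting one structural rigidity of the purely additive case: the symbol $\Xi$ can only ever occur in the combination $\cI_\rho(\Xi)$. Concretely, I would first show by induction on the level $m$ of the recursion~\eqref{eq:recurse1}--\eqref{eq:recurse2} that if $\tau\in\cF_F$ and $\tau\neq\Xi$, then every occurrence of $\Xi$ in $\tau$ is contained in a subexpression $\cI_\rho(\Xi)$. The point is that $\mathfrak{M}_F=\{\Xi,U^n\colon 1\leq n\leq N\}$ contains $\Xi$ only as the standalone monomial, so $\Xi$ never lies in any $\cU_m$; hence in~\eqref{eq:recurse1} the symbol $\Xi$ can enter a product $\cQ(\cU_{m-1},\Xi)$ only through the choice $\cQ=\Xi$, never as a factor of a monomial $U^n$, and in~\eqref{eq:recurse2} it can enter only inside $\cI_\rho(\Xi)$. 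The induction then goes through immediately for the elements of $\cU_m$ and of $\cW_m$ in turn.

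In the tree picture this says that every $\Xi$-edge of a decorated tree has an upper endpoint $w$ of degree exactly two, the other edge at $w$ being the $\cI_\rho$-edge joining $w$ to its parent (the single exception being $\tau=\Xi$, a root carrying one $\Xi$-pendant); this is precisely why the tree of Figure~\ref{fig_trees}(c) is excluded. From here the bijection is almost immediate. Passing to the bare tree deletes each $\Xi$-edge together with its leaf, and by the previous paragraph this turns each such $w$ into a leaf of the bare tree, removing exactly one edge at it. Conversely, a (non-root) leaf $w$ of a bare tree carries, in the decorated tree, only the $\cI_\rho$-edge to its parent (no further $\cI_\rho$-edge, else $w$ would not be a leaf of the bare tree), so $w$ represents $\cI_\rho$ applied to the product of the $\Xi$-pendants at $w$; since $\cI_\rho(\mathbf{1})=0$ rules out zero pendants and the degree-two conclusion rules out two or more, there is exactly one $\Xi$-pendant at $w$. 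Hence the decorated tree is recovered from its bare tree by hanging exactly one $\Xi$-pendant at each leaf, and this is a two-sided inverse; under it a decorated tree of type $(p,q,0)$ corresponds to a bare tree with $q$ edges and $p$ leaves. (Deleting pendants only lowers degrees, so the bare trees so obtained are exactly those with all vertices of degree $\leq N+1$ and root of degree $\leq N$, once the second assertion is in hand.)

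For that second assertion I would run an induction on the number of vertices to show that any rooted tree $T$ with all vertices of degree $\leq N+1$ and root of degree $\leq N$ represents, after attaching a $\Xi$-pendant at each leaf, an element of $\cW_F$. A single-vertex $T$ gives $\Xi\in\cW_1$. Otherwise the root has $s$ children $v_1,\dots,v_s$ with $1\leq s\leq N$, and each subtree $T_i$ at $v_i$ again satisfies the degree bounds with $v_i$ now of degree $\leq N$ (one of its $\leq N+1$ edges in $T$ led to the root); by induction $T_i$ represents some $\sigma_i\in\cW_{m_i}$, so $\cI_\rho(\sigma_i)\in\cU_{m_i+1}$ by~\eqref{eq:recurse2}, and taking $\cQ=U^s\in\mathfrak{M}_F$ in~\eqref{eq:recurse1} puts the product $\prod_{i=1}^s\cI_\rho(\sigma_i)$ — which is exactly the symbol read off from $T$ — into $\cW_{m+1}$ for $m=\max_i(m_i+1)$. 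Since $\cW_F\subseteq\cF_F$, constructibility follows.

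The main obstacle is the structural claim of the first paragraph: getting the bookkeeping of the recursion airtight, so that ``$\Xi$ is never in $\cU_m$, hence never multiplied by another symbol'' is fully justified, since this is exactly the feature of purely additive noise that makes the correspondence one-to-one. Once that rigidity is available, both the bijection and the constructibility induction are routine, modulo a little care with the degenerate cases (the lone symbol $\Xi$, and roots of small degree).
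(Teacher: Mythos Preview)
Your proposal is correct and follows essentially the same approach as the paper. Both proofs hinge on the same structural fact about the purely additive recursion --- that $\Xi$ can never appear multiplied by another factor, only as the sole argument of an $\cI_\rho$ --- and both establish constructibility by induction on the number of vertices, peeling off the root and invoking the monomial $U^s$ for the $s$ subtrees.

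The organizational difference is that you isolate and prove the structural claim (``every $\Xi$ sits inside $\cI_\rho(\Xi)$'') explicitly at the outset, whereas the paper proves constructibility first and only afterwards invokes the impossibility of strings like $\Xi\cI_\rho(\Xi)^r$ to get uniqueness of the decoration. Your order is arguably cleaner, since it makes the tree-level consequence (each $\Xi$-parent has degree exactly two) available immediately for both directions of the bijection. Your base case for constructibility ($\Xi\in\cW_1$ for the single-vertex tree) also meshes more smoothly with the induction than the paper's choice of $\mathbf{1}$, which runs into the awkwardness that $\cI_\rho(\mathbf{1})=0$ when the inductive step hits a two-vertex tree. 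You correctly flag the residual ambiguity at the single-vertex tree (which is the bare tree of both $\mathbf{1}$ and $\Xi$); this degenerate case is harmless for the applications in the paper, which only use the bijection for $q\geqs 1$.
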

\begin{proof}
We first prove the second claim, by induction on the number of vertices of the
tree. The trivial tree with one vertex and no edge represents the element ${\bf
1}$, which belongs to the model space, proving the base case. Consider now any
bare tree with maximal degree $N+1$ and maximal root degree $N$. If the root has
degree $1$, it corresponds to an element of the form $\cI_\rho(\tau')$, where
the tree representing $\tau'$ has maximal degree $N+1$ and maximal root degree
$N$, and thus belongs to the model space by induction hypothesis. If the root
has degree $2\leqs r\leqs N$, by cutting the root we obtain $r$ trees belonging
to the model space by induction hypothesis. Now the reverse of both operations
(adding an edge at the root or joining $r$ trees at their roots) are compatible
with the recursive procedure~\eqref{eq:recurse1}--\eqref{eq:recurse2}.

To prove the first claim, first observe that any bare tree can be made into an
admissible decorated tree by attaching an edge of type $\Xi$ to every leaf.
To prove that this is the only possibility, assume that we attach an edge of
type $\Xi$ to a vertex of the bare tree which has degree $2\leqs r\leqs N$
(\figref{fig_trees}c). This would mean that the corresponding element
contains the string $\Xi\cI_\rho(\Xi)^r$. However, one easily shows by 
induction that such elements cannot appear in the recursive
procedure~\eqref{eq:recurse1}--\eqref{eq:recurse2}. 
\end{proof}

\begin{rmk}
It is important to realise that the one-to-one correspondence between bare and
decorated trees only holds because we consider equations with purely additive
noise. For SPDEs of the form $\partial_t u = Lu + f(u) + g(u)\xi$, as those
considered for instance in~\cite{Hairer4}, decorated trees such as the one in
\figref{fig_trees}c can occur, meaning that several decorated trees can be
obtained from a given bare tree. 
\end{rmk}

\subsubsection{The case $N=2$}

Counting trees is simplest in the case $N=2$, because then it turns out that all
bare trees are either binary trees, or binary trees minus one edge. Recall
that a binary (rooted) tree is a tree in which each vertex except the leaves has
exactly two children. Thus the root has degree $2$, while all other vertices
have degree $3$ or $1$. 

We call~\emph{degree vector}, or simply~\emph{degree} of a tree the vector 
$(d_1,d_2,\dots)$ where $d_i$ denotes the number of vertices of degree $i$. The
degree vector of a  binary tree is of the form $(n+1,1,n-1)$ for some
$n\in\N$. We write $d(\tau) = (d_1,d_2,d_3)$ for the degree vector of the
decorated tree representing an element $\tau\in\cF_F$, and $d'(\tau) =
(d'_1,d'_2,d'_3)$ for the degree vector of the bare tree representing $\tau$.
The one-to-one correspondence described in Lemma~\ref{lem:bare_decorated}
implies that $d_1 = d'_1$, $d_3 = d'_3$ and $d_2-d'_2=p$ is the number of leaves
of the bare tree. 

\begin{prop}
\label{prop:trees_N2} 
Assume $N=2$, and let $\tau\in\cF_F$ be an element of type $(p,q,0)$ having
negative homogeneity. Then 
\begin{itemize}
\item 	if $q$ is even, then the bare tree representing $\tau$ is a binary
tree with $q+1$ vertices;
\item 	if $q$ is odd, then the bare tree representing $\tau$ is obtained by
removing one edge from a binary tree with $q+2$ vertices.
\end{itemize}
\end{prop}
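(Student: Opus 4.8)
The plan is to determine the degree vector of the bare tree of $\tau$ from the handshake lemma together with the relation $p=1+\lfloor q/2\rfloor$ forced by negative homogeneity, and then to recognise the resulting shapes. First I would invoke Remark~\ref{remark:pq}: since $N=2$ and $\tau$ is of type $(p,q,0)$ with $|\tau|_\fs<0$, necessarily $p=1+\lfloor q/2\rfloor$. If $q=0$ then $\tau=\Xi$ by Lemma~\ref{lem:maxneg}, whose bare tree is the single vertex and the statement is the degenerate one-vertex case; so from now on $q\geqs 1$. Because $k=0$, the root of the decorated tree representing $\tau$ is a product of $r$ factors, each necessarily of the form $\cI_\rho(\cdot)$ (a bare $\Xi$ can never be a factor of a product in the recursion~\eqref{eq:recurse1}--\eqref{eq:recurse2}), with $1\leqs r\leqs N=2$; hence in the bare tree the root is incident to exactly $r$ edges, every vertex has degree at most $N+1=3$, and, by the construction of the bare tree from the decorated tree (Lemma~\ref{lem:bare_decorated}), the bare tree has exactly $p$ leaves. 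Writing $d'_i$ for the number of vertices of degree $i$, the number $d'_1$ of degree-one vertices therefore equals $p$ if $r=2$ and $p+1$ if $r=1$ (the root being an extra degree-one vertex that is not a leaf in the latter case).

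The bare tree has $q+1$ vertices and $q$ edges, so $d'_1+d'_2+d'_3=q+1$ and, by the handshake lemma, $d'_1+2d'_2+3d'_3=2q$, whence $d'_2+2d'_3=q-1$. Substituting the value of $d'_1$ and $p=1+\lfloor q/2\rfloor$ leaves exactly three cases: if $q=2m$ then $r=1$ would give $d'_2=-1$, so $r=2$, $d'_1=m+1$ and the degree vector is $(m+1,1,m-1)$; if $q=2m+1$ and $r=2$ then $d'_1=m+1$ and the degree vector is $(m+1,2,m-1)$; if $q=2m+1$ and $r=1$ then $d'_1=m+2$ and the degree vector is $(m+2,0,m)$.

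It remains to identify the tree in each case, using that a rooted tree of maximal degree $3$ is a binary tree precisely when its root has two children and every other internal vertex has two children. In the even case the unique degree-two vertex is forced to be the root (which has degree $r=2$), so every non-root vertex has zero or two children and the bare tree is a binary tree on $q+1$ vertices. In the odd case with $r=2$ there is exactly one non-root vertex $u$ of degree two, which has a single child; attaching a new leaf as a second child of $u$ yields a binary tree on $q+2$ vertices, and the bare tree is obtained from it by removing that pendant edge. In the odd case with $r=1$ the unique child $v$ of the root heads a binary subtree on $q$ vertices, and attaching a new leaf as a second child of the root yields a binary tree on $q+2$ vertices from which the bare tree is again obtained by removing a pendant edge. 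In both odd sub-cases the bare tree is a binary tree on $q+2$ vertices with one edge removed, as required.

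The step I expect to be the main obstacle is the bookkeeping in the first paragraph: deciding whether the root counts among the degree-one vertices, equivalently whether $d'_1$ equals $p$ or $p+1$, is exactly what forces the case split on $r$ when $q$ is odd, and it must be settled before the handshake relation can pin the degree vector down. A secondary point requiring care is the verification that, the nonlinearity being a polynomial and $k=0$, the root of the decorated tree is genuinely a product of at most $N=2$ symbols $\cI_\rho(\cdot)$, so that $r\in\{1,2\}$ and only the two odd-case scenarios above can occur.
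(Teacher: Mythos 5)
Your proof is correct and follows essentially the same route as the paper's: set up the two linear relations on the degree vector (vertex count and handshake lemma), split on the degree of the root, solve for $(d'_1,d'_2,d'_3)$, and identify the resulting shapes as binary trees or once-pruned binary trees. The extra care you take---treating $q=0$ as a degenerate case and explicitly justifying that for $k=0$ the root factors are of the form $\cI_\rho(\cdot)$, so that the root degree is exactly the number $r\in\{1,2\}$ of factors---fills in points the paper leaves implicit, but the underlying argument is the same.
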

\begin{proof}
For $N=2$, each vertex of a bare tree has at most degree $3$, and the root has
at most degree $2$. Furthermore, each leaf has degree $1$. Thus we have the
relations  
\begin{align}
\nonumber
 d'_1 + d'_2 + d'_3 &= q+1\;, \\
 d'_1 + 2d'_2 + 3d'_3 &= 2q\;.
 \label{eq:degrees} 
\end{align}
The second relation is due to the fact that by summing the degrees of all
vertices, each edge is counted exactly twice. 

If $q=2n$ is even, then~\eqref{eq:pq} implies $p=n+1$. If the root has degree
$2$, then $d'_1=p=n+1$ and the solution of the system~\eqref{eq:degrees} is
given by 
\begin{equation}
\label{eq:d123_qeven} 
 (d'_1, d'_2, d'_3) = (n+1, 1, n-1)\;.
\end{equation} 
This corresponds to a binary tree with $2n+1$ vertices and $n+1$ leaves, such as
$\cI_\rho(\Xi)^2$ if $n=1$. If the root has degree $1$, then $d'_1=p+1=n+2$, and
solving~\eqref{eq:degrees} yields $d'_2=-1$, which is not allowed. 

If $q=2n+1$ is odd, then~\eqref{eq:pq} yields again $p=n+1$. If the root has
degree $2$, then $d'_1=n+1$ and the solution of~\eqref{eq:degrees} is 
\[
 (d'_1, d'_2, d'_3) = (n+1, 2, n-1)\;.
\]
By adding one edge to the vertex of degree $2$ which is not the root, we 
obtain a binary tree with $2n+3$ vertices and $n+2$ leaves (for an example, see
\figref{fig_trees}b). 
Finally, if the root has degree $1$, then $d_1=n+2$ and 
\[
 (d'_1, d'_2, d'_3) = (n+2, 0, n)\;.
\]
This case is obtained by attaching one edge to the root, and a binary tree with
$2n+1$ vertices and $n$ leaves to the other end of this edge. For instance, if
$n=0$ one obtains the symbol $\cI_\rho(\Xi)$, while for $n=1$ one obtains
$\cI_\rho(\cI_\rho(\Xi)^2)$.
\end{proof}

The combinatorics of non-homeomorphic binary trees has been studied by
Otter~\cite{Otter1948}. 
The number of non-homeomorphic rooted binary trees with $n$ leaves is given by
the Wedderburn--Etherington number $w_n$. The first few of these numbers
(starting with $n=0$) are 
\[
 0, 1, 1, 1, 2, 3, 6, 11, 23, 46, 98, 207, 451, 983, 2179, 4850, 10905, 24631,
56011, \dots 
\]
(sequence {\tt A001190} in the On-Line Encyclopedia of Integer Sequences OEIS). 
In particular, it is known~\cite{Otter1948} that $w_n$ behaves asymptotically
like 
\[
 w_n \sim c_2 \frac{(\alpha_2^{-1})^n}{n^{3/2}}\;,
\]
where $\alpha_2 \decapprox{0.4026975}$ (OEIS sequence {\tt A240943}) is the
radius of convergence of the generating series $\sum_n w_n x^n$ and the
prefactor is $c_2 \decapprox{0.3187766}$ (OEIS sequence {\tt A245651}). 

\begin{thm}
\label{thm:cF2} 
For $N=2$, there exist constants $C_2^\pm$, depending only on $d$, such that the
number of negative-homogeneous elements satisfies
\begin{equation}
\label{eq:c0F_N2} 
 C^-_2 (\rho - \rhocrit)^{3/2} 
 \exp \biggl\{ \frac{\beta_2 d}{\rho-\rhocrit} \biggr\}
 \leqs c_F(2,d,\rho)
 \leqs C^+_2 (\rho - \rhocrit)^{3/2} 
 \exp \biggl\{ \frac{\beta_2 d}{\rho-\rhocrit} \biggr\}
\end{equation} 
for $\rhocrit < \rho \leqs 2$, 
where $\beta_2 = \frac89
\log(\alpha_2^{-1})\decapprox{0.8085063}$.
\end{thm}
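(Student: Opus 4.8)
The plan is to express $c_F(2,d,\rho)$ as a sum over admissible types $(p,q,k)$ with $|\tau|_\fraks<0$ and to reduce it to a sum of Wedderburn--Etherington numbers. By Remark~\ref{remark:pq} negative homogeneity forces $p=1+\lfloor q/2\rfloor$, and $q$ is bounded by $q^\star\asymp\frac{2(\rho+d)}{3(\rho-\rhocrit)}$, a number of order $(\rho-\rhocrit)^{-1}$. A short check on the recursion~\eqref{eq:recurse1}--\eqref{eq:recurse2} shows that, because $X^\ell\Xi$ with $\ell\neq0$ never arises, a monomial label can only sit on a vertex that carries no $\Xi$-edge and at most one $\cI_\rho$-edge; on the full binary bare trees produced for even $q$ by Proposition~\ref{prop:trees_N2}, every vertex carries $2$ or $0$ integration edges (or a $\Xi$-edge), so the label is forced to be $0$ everywhere. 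Hence for $q=2n$ even the negative-homogeneous elements of the corresponding type are precisely the $w_{n+1}$ non-homeomorphic binary trees with $n+1$ leaves, all with $k=0$, each constructible by Lemma~\ref{lem:bare_decorated}.

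The first real step is to show that these even-$q$ contributions dominate. The homogeneity condition $(n+1)\alpha_0+(2n+1)\rho<0$ for the odd-$q$ trees of Proposition~\ref{prop:trees_N2} cuts off odd $q$ at (at most) roughly $q^\star/2$ rather than $q^\star$; since the number of odd-$q$ bare trees at a fixed level is still only $(\alpha_2^{-1})^{q/2}$ up to a polynomial factor in $q$ (they are binary trees with at most one subdivided edge), and since a nontrivial label adds only a bounded-in-$d$ multiplicity, the entire odd-$q$ part is bounded by $(\alpha_2^{-1})^{q^\star/4}$ times a polynomial, negligible against $(\alpha_2^{-1})^{q^\star/2}$. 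So up to such errors, $c_F\asymp\sum_{0\leqs n\leqs \lfloor q^\star/2\rfloor}w_{n+1}$.

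The second step is the asymptotic analysis of this sum using $w_n\sim c_2(\alpha_2^{-1})^n n^{-3/2}$. Because $\alpha_2^{-1}>1$, the sum is comparable to its top term $w_{\lfloor q^\star/2\rfloor+1}$ up to a fixed constant, so $c_F\asymp (\rho-\rhocrit)^{3/2}(\alpha_2^{-1})^{q^\star/2}$: the $n^{-3/2}$ in the Wedderburn--Etherington asymptotics, evaluated at $n\asymp(\rho-\rhocrit)^{-1}$, yields the prefactor $(\rho-\rhocrit)^{3/2}$, while $(\alpha_2^{-1})^{q^\star/2}$ yields $\exp\{\beta_2 d/(\rho-\rhocrit)\}$ once the value of $q^\star$ as $\rho\searrow\rhocrit$ is inserted, with the $O(1)$ fluctuation of $\lfloor q^\star/2\rfloor$ and the constant $c_2$ absorbed into $C_2^{\pm}$; this pins down the constant $\beta_2$. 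The lower bound then follows by retaining only the largest admissible even $q$, and the upper bound by summing the full geometric-type series and discarding the odd-$q$ and label corrections.

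The step I expect to be the bottleneck is the quantitative control of the subdominant contributions in the upper bound: one must ensure that neither the odd-$q$ trees nor the polynomially-decorated trees inflate the prefactor beyond $(\rho-\rhocrit)^{3/2}$. This rests on two facts that have to be made precise --- that odd $q$ is genuinely cut off at $\asymp q^\star/2$ (the homogeneity computation sketched above), and that the polynomial budget $\theta(p,q)=-p\alpha_0-q\rho\leqs\frac{\rho+d}2+\kappa$ is small enough that, in combination with the structural restriction on where monomials may sit, it contributes only a $d$-dependent constant rather than a power of $q$. A secondary, purely combinatorial task is bounding the number of odd-$q$ bare trees by a Wedderburn--Etherington number times a polynomial, via the correspondence with (binary tree, marked edge) pairs modulo tree automorphisms.
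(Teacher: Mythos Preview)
Your strategy is exactly the paper's: isolate the even-$q$ contribution as a sum of Wedderburn--Etherington numbers, show that odd-$q$ trees and polynomial decorations are exponentially subdominant because their $q$-range is cut off at a fixed fraction of $q^\star$, and read off the asymptotics from the top term of the remaining sum. Your structural observation that for $N=2$ an even-$q$ bare tree admits no nontrivial monomial label (because a label can coexist with at most one outgoing $\cI_\rho$-edge, while every vertex of a full binary bare tree carries two or zero) is correct and in fact slightly sharper than the paper, which only argues that the polynomial part is subleading.

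There is, however, a numerical gap you have to close. You correctly count $w_{n+1}$ binary trees for $q=2n$ (a binary bare tree with $2n+1$ vertices has $n+1$ leaves), so your dominant sum is $\sum_{n\leqs q^\star/2} w_{n+1}\asymp(\alpha_2^{-1})^{q^\star/2}(q^\star)^{-3/2}$. But with $q^\star\to\tfrac{8d}{9(\rho-\rhocrit)}$ as $\rho\searrow\rhocrit$, the exponent $\tfrac{q^\star}{2}\log(\alpha_2^{-1})$ equals $\tfrac49\log(\alpha_2^{-1})\cdot\tfrac{d}{\rho-\rhocrit}$, not $\tfrac89\log(\alpha_2^{-1})\cdot\tfrac{d}{\rho-\rhocrit}$. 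Your sentence ``$(\alpha_2^{-1})^{q^\star/2}$ yields $\exp\{\beta_2 d/(\rho-\rhocrit)\}$'' therefore does not check out against the stated $\beta_2=\tfrac89\log(\alpha_2^{-1})$: it delivers exactly half. The paper's proof writes the summand as $w_{2n+1}$ rather than $w_{n+1}$ and obtains $(\alpha_2^{-1})^{q^\star}$, which \emph{does} match the stated $\beta_2$. You need to reconcile this factor of two --- either your Wedderburn--Etherington index is off, or the constant identified in the statement is; in any case the step you flag as routine (``once the value of $q^\star$ is inserted \dots\ this pins down the constant $\beta_2$'') is precisely where your argument currently fails to close.
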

\begin{proof}
Consider first the number $c^0_F(2,d,\rho)$ of negative-homogeneous elements
with trivial polynomial part, which are indexed by trees as given by
Proposition~\ref{prop:trees_N2}. We start by counting trees with an odd number
$q+1=2n+1$ of vertices, which are exactly rooted binary trees with $p=n+1$
leaves.
Condition~\eqref{eq:condition_pq} yields $1\leqs n<\frac12 q^\star$, so that the
total number of these trees is given by 
\[
\sum_{n=1}^{\lfloor q^\star/2 \rfloor} w_{2n+1} 
\asymp \frac{(\alpha_2^{-1})^{q^\star}}{(q^\star)^{3/2}}\;.
\]
The lower bound is obtained by considering only the last term of the sum,
while a matching upper bound is found by approximating the sum by an integral
(estimating separately the contribution of small and large $n$).
Taking into account the expression~\eqref{eq:pstar-qstar} for $q^\star$, we find
that the number of these trees obeys indeed~\eqref{eq:c0F_N2}. 

In addition, we have to count trees with an even number $q+2=2n+2$ of vertices
and $p=n+1$ leaves. In this case, Condition~\eqref{eq:condition_pq} yields
$0\leqs n<\frac12 (q^\star-p^\star) \leqs \frac14 q^\star$. Each of these $n$
yields $w_{2n+1}$ binary trees, and there are at most $2n+2$ places to attach
the additional edge. The total number of these trees is thus bounded above by 
\[
\sum_{n=0}^{\lfloor q^\star/4 \rfloor} (2n+2) w_{2n+1} 
\asymp \frac{(\alpha_2^{-1})^{q^\star/2}}{(q^\star)^{1/2}}
= q^\star(\alpha_2^{-1})^{-q^\star/2}
\frac{(\alpha_2^{-1})^{q^\star}}{(q^\star)^{3/2}}\;.
\]
Since $q\mapsto q(\alpha_2^{-1})^{-q/2}$ is bounded above, this number
satisfies the upper bound~\eqref{eq:c0F_N2} for an appropriate constant $C_2^+$.

To extend the result to elements with nontrivial polynomial part, note
that~\eqref{eq:tau_homogeneity} imposes $p\alpha_0+q\rho+|k|_\fs < 0$. If
$q=2n$, then $p=n+1$ and the condition becomes $n(3\rho-d) < \rho+d-2|k|_\fs$,
which translates into $n < \lambda(q^\star/2)$ for some $\lambda < 1$.
Since $|k|_\fs$ is bounded above by $(\rho+d)/2$ (cf.~the
proof of Theorem~\ref{thm:hF}), for each of these $n$, the number of choices to
add polynomial elements grows at most polynomially in $n$. The maximal value of
$n$ being only a fraction of $q^\star/2$, this does not modify the upper bound
on $c_F$. The same argument applies to odd $q$. 
\end{proof}

\subsubsection{The case $N>2$}

For general $N$, the most important r\^ole will be played by regular trees of
degree $N$, that is, trees in which each vertex except the leaves has exactly
$N$ children. The degree vector of such trees is of the form
$(d'_1,d'_2,\dots,d'_{N+1})=((N-1)n+1,0,\dots,0,1,n-1)$ for some $n\in\N$.  

\begin{prop}
\label{prop:trees_Ng2} 
Assume $N>2$, and let $\tau\in\cF_F$ be an element of type $(p,q,0)$ having
negative homogeneity. Then 
\begin{itemize}
\item 	if $q$ is a multiple of $N$, then the bare tree representing $\tau$
is a regular tree of degree $N$ with $q+1$ vertices;
\item 	otherwise, the bare tree representing $\tau$ is obtained by
removing $r$ edges, where $1\leqs r\leqs N-1$, from a regular tree of
degree $N$ with $q+r+1$ vertices.
\end{itemize}
\end{prop}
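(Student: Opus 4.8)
The plan is to follow the strategy of the proof of Proposition~\ref{prop:trees_N2}, but to replace the root–degree case distinction used there by a single uniform degree-sum identity. First I would collect the combinatorial data of the bare tree $T$ representing $\tau$: it has $q$ edges and $q+1$ vertices; by the one-to-one correspondence of Lemma~\ref{lem:bare_decorated} (and since the polynomial part is trivial, so that a bare leaf can only carry a $\Xi$-edge, never a pure $X^k$-decoration) it has exactly $p$ leaves when $q\geqs1$; and, $\tau$ being constructible, every vertex has degree at most $N+1$ while the root has degree at most $N$. Writing $c_v$ for the number of children of a vertex $v$, this is equivalent to saying that $T$ has $q+1-p$ non-leaf vertices ("parents"), each satisfying $1\leqs c_v\leqs N$, with $\sum_{v\ \text{parent}}c_v=q$.

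The only arithmetic input coming from negative homogeneity is the identity $p=1+\lfloor\frac{N-1}{N}q\rfloor$ of Remark~\ref{remark:pq}. Writing $q=Nn+s$ with $0\leqs s\leqs N-1$, this gives $p=(N-1)n+1$ when $s=0$ and $p=(N-1)n+s$ when $1\leqs s\leqs N-1$. The pivotal computation is the total \emph{degree deficit}
\[
 \sum_{v\ \text{parent}}(N-c_v) \;=\; N(q+1-p)-q\;,
\]
which, after substituting these two expressions for $p$, equals $0$ when $s=0$ and equals $r:=N-s\in\{1,\dots,N-1\}$ when $1\leqs s\leqs N-1$. (The degenerate case $q=0$, i.e.\ $\tau=\Xi$, is immediate: its bare tree is a single vertex, a regular tree of degree $N$ with $q+1=1$ vertex.)

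From this the two bullets follow directly. If $s=0$ the deficit is $0$ and every summand $N-c_v$ is nonnegative, hence $c_v=N$ for every parent; there are $q+1-p=n$ parents, so $T$ is a rooted tree all of whose $n$ internal vertices have exactly $N$ children, i.e.\ a regular tree of degree $N$ with $Nn+1=q+1$ vertices. If $1\leqs s\leqs N-1$, I would \emph{complete} $T$ to a regular tree: to every parent $v$ with $c_v<N$ attach $N-c_v$ fresh leaf children. This adds exactly $\sum_v(N-c_v)=r$ edges and $r$ vertices, raises every parent's out-degree to exactly $N$ and leaves the old leaves untouched, so the result $T'$ is a regular tree of degree $N$ with $(q+1)+r=q+r+1=N(n+1)+1$ vertices; deleting the $r$ added edges recovers $T$. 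This is exactly the asserted description, and the bound $1\leqs r\leqs N-1$ is already in hand.

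The step I expect to require the most care is the uniform bookkeeping around the root. One must check that $c_v\leqs N$ holds for the root as well as for interior vertices — it does, since the root carries no parent edge and has degree at most $N$ — that the root is genuinely a parent (not a leaf) whenever $q\geqs1$, by connectedness, and that the completion step may legitimately raise the root's out-degree up to $N$ without violating the definition of a regular tree. Once these points are settled, the "root of degree $1$ / degree $2$" branching that clutters the $N=2$ argument disappears, and the whole proof collapses to the displayed degree-sum identity combined with the value $p=1+\lfloor\frac{N-1}{N}q\rfloor$ supplied by negative homogeneity.
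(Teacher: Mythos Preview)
Your argument is correct and, in fact, somewhat tidier than the paper's. Both proofs rest on the same two ingredients --- the handshake identity for the bare tree and the constraint $p=1+\lfloor\frac{N-1}{N}q\rfloor$ from Remark~\ref{remark:pq} --- but they organise the bookkeeping differently. The paper works with the degree vector $(d'_1,\dots,d'_{N+1})$ and must therefore split into the subcases ``root of degree $>1$'' versus ``root of degree $1$'', since in the latter case $d'_1=p+1$ rather than $d'_1=p$; it then solves the linear system $\sum d'_j=q+1$, $\sum jd'_j=2q$ in each subcase. By counting \emph{children} $c_v$ rather than degrees, you make the root look like every other parent (it too satisfies $1\leqs c_v\leqs N$), so the split disappears and everything reduces to the single deficit identity $\sum_{v\text{ parent}}(N-c_v)=N(q+1-p)-q$. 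What the paper's route buys is the explicit degree vector in each case, which is not actually required by the statement; what your route buys is a one-line computation that handles root and non-root uniformly. Your identification of $r=N-s$ (with $q=Nn+s$) is also more explicit than the paper's ``at most $N-1$ edges'', and is consistent with the proposition since $q+r+1=N(n+1)+1$ is indeed the vertex count of a regular $N$-ary tree. The only point to watch, which you correctly flag, is that $p$ equals the number of childless vertices of the bare tree precisely because the polynomial part is trivial and Lemma~\ref{lem:bare_decorated} forbids $\Xi$-edges on non-leaf vertices; once that is noted, the proof is complete.
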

\begin{proof}
Similarly to the case $N=2$, we must have 
\[
 \sum_{j=1}^{N+1} d'_j = q+1\;, \qquad
 \sum_{j=1}^{N+1} jd'_j = 2q\;.
\]
If $q=Nn$ for some integer $n$, then $p=(N-1)n+1$. If the root has degree
larger than $1$, then $d'_1=p$ and we obtain the system 
\begin{align*}
d'_2 + \dots + d'_{N+1} &= n\;, \\
2d'_2 + \dots + (N+1)d'_{N+1} &= (N+1)n-1\;.
\end{align*}
Eliminating $d'_{N+1}$ we get $(N-1)d'_2 + \dots + d'_N = 1$. The only solution
is thus given by 
\begin{align*}
d'_1 &= (N-1)n+1\;, \\
d'_j &= 0  && \text{for $j=2,\dots,N-1$}\;, \\
d'_N &= 1\;, \\
d'_{N+1} &= n-1\;,
\end{align*}
which means that we have a regular tree of degree $N$. If the root has degree
$1$, then $d'_1=p+1$. Proceeding as above, we obtain $(N-1)d'_2 + \dots + d'_N =
-N+1$, which is not allowed. 

If $q=Nn+r$ for some $1\leqs r\leqs N-1$, then $p=(N-1)n+1+\lfloor
r-\frac{r}{N}\rfloor = (N-1)n+r$. If the root has degree larger than $1$, then
we obtain the system 
\begin{align*}
d'_2 + \dots + d'_{N+1} &= n+1\;, \\
2d'_2 + \dots + (N+1)d'_{N+1} &= (N+1)n+r\;,
\end{align*}
which yields $(N-1)d'_2 + \dots + d'_N = N+1-r$. This implies the bounds 
\begin{gather*}
 (N-1)n+1 \leqs d'_1 \leqs (N-1)(n+1) \;, \\
 2 \leqs \sum_{j=2}^N (N+1-j) d'_j \leqs N \;, \\
 n+1-N \leqs d'_{N+1} \leqs n\;.
\end{gather*} 
To obtain a regular tree, one has to attach $N+1-j$ edges to each vertex of
degree $j$ for $2\leqs j\leqs N-1$, and one edge to each vertex of degree $N$
which is not the root, which amounts to attaching at most $N-1$ edges. 

The last case occurs when $q=Nn+r$ and the root has degree $1$. Then a similar
computation yields $(N-1)d'_2 + \dots + d'_N = 1-r$, which imposes $r=1$ and 
the sum to vanish. This yields 
\begin{align*}
d'_1 &= (N-1)n+2\;, \\
d'_j &= 0  && \text{for $j=2,\dots,N$}\;, \\
d'_{N+1} &= n\;,
\end{align*}
and corresponds to a regular tree of degree $N$ attached to a single edge
originating in the root. 
\end{proof}

Although the combinatorics is a little bit more involved than in the case
$N=2$, when $\rho$ approaches $\rhocrit$ the vast majority of bare trees will be
regular trees of degree $N$. The number $w_n^{(N)}$ of non-homeomorphic regular
trees of degree $N$ with $n$ vertices has also been analysed
in~\cite{Otter1948}. It behaves asymptotically as 
\begin{equation}
\label{eq:wN_asymptotics} 
 w_n^{(N)} \sim c_N \frac{(\alpha_N^{-1})^n}{n^{3/2}}\;.
\end{equation} 
where $\alpha_N$ is the radius of convergence of the generating series. In
particular, $\alpha_3 \decapprox{0.3551817}$ and $\lim_{N\to\infty} \alpha_N
\decapprox{0.3383219}$. This yields the following result. 

\begin{thm}
\label{thm:Nbigger2}
For any $N>2$, there exist constants $C_N^\pm$, depending only on $N$ and $d$,
such that the number of negative-homogeneous elements
satisfies
\begin{equation}
\label{eq:c0F_N} 
 C^-_N (\rho - \rhocrit)^{3/2} 
 \exp \biggl\{ \frac{\beta_N d}{\rho-\rhocrit} \biggr\}
 \leqs c_F(N,d,\rho)
 \leqs C^+_N (\rho - \rhocrit)^{3/2} 
 \exp \biggl\{ \frac{\beta_N d}{\rho-\rhocrit} \biggr\}
\end{equation} 
for $\rhocrit < \rho \leqs 2$, 
where 
\begin{equation}
\label{eq:betaN} 
\beta_N = \frac{2N^2}{(N+1)^2}\log(\alpha_N^{-1})\;.
\end{equation} 
In particular,
$\beta_3 = \frac98 \log(\alpha_3^{-1}) \decapprox{1.164517}$.
\end{thm}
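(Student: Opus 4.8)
The plan is to follow the proof of Theorem~\ref{thm:cF2}, replacing binary trees by regular trees of degree $N$ and the Wedderburn--Etherington numbers by the Otter numbers $w_n^{(N)}$. As a first step I would count $c_F^0(N,d,\rho)$, the number of negative-homogeneous elements with trivial polynomial part. By the bijection of Lemma~\ref{lem:bare_decorated} these are enumerated by admissible bare trees, which Proposition~\ref{prop:trees_Ng2} sorts according to the residue of $q$ modulo $N$: when $q=Nn$ the bare tree is exactly a regular tree of degree $N$ with $Nn+1$ vertices, and when $q=Nn+r$ with $1\leqs r\leqs N-1$ it is obtained from such a tree by deleting at most $N-1$ pendant edges together with their leaves. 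Inserting $p=1+\lfloor\frac{N-1}{N}q\rfloor$ (Remark~\ref{remark:pq}) into $|\tau|_\fs=p\alpha_0+q\rho$ and imposing negativity gives, for $q=Nn$, the sharp cutoff $n\leqs q^\star/N$ with $q^\star$ as in~\eqref{eq:pstar-qstar}; hence the number of these elements is $\sum_{n=1}^{\lfloor q^\star/N\rfloor} w_{Nn+1}^{(N)}$.

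The second step is to evaluate this sum and convert it into the form of~\eqref{eq:c0F_N}. Since $w_n^{(N)}\sim c_N(\alpha_N^{-1})^n n^{-3/2}$ with $\alpha_N^{-1}>1$, the terms grow geometrically, so the sum is comparable to its last term: the lower bound keeps only $n=\lfloor q^\star/N\rfloor$, and the matching upper bound dominates the tail by a geometric series and the bounded head by a constant, exactly as for $N=2$. This yields $\sum_{n\leqs q^\star/N} w_{Nn+1}^{(N)}\asymp (q^\star)^{-3/2}(\alpha_N^{-1})^{q^\star}$. The key arithmetic identity to extract next is $q^\star\log(\alpha_N^{-1}) = \beta_N d\,(\rho-\rhocrit)^{-1}+O(1)$ with $\beta_N=\frac{2N^2}{(N+1)^2}\log(\alpha_N^{-1})$, which follows from~\eqref{eq:pstar-qstar} by writing $\rho+d=(\rho-\rhocrit)+(\rhocrit+d)$ and using $\rhocrit+d=\frac{2Nd}{N+1}$, the $O(1)$ being controlled for $\kappa$ small enough relative to $\rho-\rhocrit$. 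Combined with $q^\star\asymp(\rho-\rhocrit)^{-1}$ this turns the estimate on $c_F^0$ into the bound~\eqref{eq:c0F_N}, all $\Theta(1)$ prefactors (the Otter constant $c_N$, the residual constant in the exponent, and the polynomial corrections handled below) being absorbed into $C_N^\pm$.

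The third step is to show the remaining elements are exponentially negligible, so that $c_F\asymp c_F^0$. For the defect families $q=Nn+r$, the same homogeneity computation---using that $\alpha_0+\rho<0$ near criticality---bounds the admissible $n$ by a quantity of order $\frac{r}{N}\cdot\frac{q^\star}{N}$, so the largest $q$ that occurs is at most $\frac{N-1}{N}q^\star+O(1)$; a bare tree in such a family is recovered from a regular tree of degree $N$ with at most $q+N$ vertices by choosing which $\leqs N-1$ edges to delete (an $O(q^{N-1})$ choice), so the total over these trees is $O\bigl(q^{N-1}(\alpha_N^{-1})^{(N-1)q^\star/N}\bigr)=o\bigl((q^\star)^{-3/2}(\alpha_N^{-1})^{q^\star}\bigr)$. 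For elements with nontrivial polynomial part, $|k|_\fs\geqs\min(1,\rho)$ is bounded below by a fixed positive constant, so imposing $p\alpha_0+q\rho+|k|_\fs<0$ forces the admissible $n$ below $\lambda q^\star/N$ for a constant $\lambda<1$ (because $-\alpha_0=\frac{\rho+d}{2}+\kappa$ stays bounded on $(\rhocrit,2]$); since $|k|_\fs\leqs\frac{\rho+d}{2}+\kappa$ as well (proof of Theorem~\ref{thm:hF}), each tree carries only polynomially many admissible decorations, and these elements again contribute $o\bigl((q^\star)^{-3/2}(\alpha_N^{-1})^{q^\star}\bigr)$. Assembling the three steps gives~\eqref{eq:c0F_N}; for $\rho$ in a compact subinterval of $(\rhocrit,2]$ the asserted inequalities are trivial after adjusting $C_N^\pm$, and the value $\beta_3=\frac98\log(\alpha_3^{-1})$ is just the case $N=3$ of~\eqref{eq:betaN}.

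The step I expect to be the main obstacle is the bookkeeping in the third paragraph: verifying \emph{uniformly}---over the residue $r$, over all admissible polynomial decorations, and over $\rho\in(\rhocrit,2]$ for the relevant range of $\kappa$---that every correction is separated from the leading regular-tree count by a genuine exponential factor $(\alpha_N^{-1})^{cq^\star}$ with $c>0$ independent of $\rho$. This is delicate because the number of correction families and the number of admissible polynomial decorations both grow polynomially in $q^\star\asymp(\rho-\rhocrit)^{-1}$; only the strict subcriticality $\rho>\rhocrit$ (equivalently, the slope gap $-\alpha_0/\rho>N/(N-1)$) keeps the cutoff $q$ for each correction family a fixed fraction below $q^\star$, and were that gap to close as $\rho\searrow\rhocrit$ the exponent $\beta_N$ itself would change.
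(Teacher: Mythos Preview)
Your proposal is correct and follows essentially the same approach as the paper: reduce to counting regular $N$-ary bare trees via Proposition~\ref{prop:trees_Ng2}, sum the Otter numbers up to $q^\star/N$ to get the main term $(q^\star)^{-3/2}(\alpha_N^{-1})^{q^\star}$, and show that the defect families ($q\not\equiv0\pmod N$) and the polynomial decorations are cut off at a fraction of $q^\star$ strictly below~$1$, hence exponentially negligible. Your bound $n\lesssim\frac{r}{N}\cdot\frac{q^\star}{N}$ for the defect families (so $q\lesssim\frac{N-1}{N}q^\star$) is in fact the correct one; the paper's stated cutoff $\lambda_r(\rho)\,q^\star/N$ with $\lambda_r(\rho)=1-\frac{2\rho r}{\rho+d+2\kappa}$ agrees with yours at $r=1$ but appears to contain an arithmetic slip for $r\geqs2$---either way the essential point, that the cutoff is uniformly a fixed fraction below $q^\star$, is the same.
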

\begin{proof}
The proof of~\eqref{eq:c0F_N} follows along the lines of the proof of
Theorem~\ref{thm:cF2}. The number of trees with $q+1=Nn+1$ vertices is given by 
\[
\sum_{n=1}^{\lfloor q^\star/N \rfloor} w^{(N)}_{Nn+1} 
\asymp \frac{(\alpha_N^{-1})^{q^\star}}{(q^\star)^{3/2}}\;.
\]
Trees with $q+1=Nn+r+1$ vertices with $1\leqs r\leqs N-1$ have a negligible
effect on the asymptotics, because \eqref{eq:tau_homogeneity} 
and the constraint~\eqref{eq:pq} on $p$ yield 
\[
 n < \frac{\rho+d+2\kappa-2\rho r}{(N+1)(\rho-\rhocrit)-2\kappa(N-1)}
 = \lambda_r(\rho) \frac{q^\star}{N}\;,
\]
where 
\begin{equation}
\label{eq:lambdar} 
 \lambda_r(\rho) = 1 - \frac{2\rho r}{\rho+d+2\kappa}
 < \lambda_r(\rhocrit)
 = 1 - \frac{(N-1)r}{N+\kappa(N+1)/d}
 \leqs \frac{1}{N} + \cO(\kappa)\;,
\end{equation}
which is less than $1$ for $\kappa$ small enough.
\end{proof}

\section{Statistical Properties of the Model Space}
\label{sec:statreg}

Let us denote by $\cF_F^- = \{ \tau\in\cF_F \colon |\tau|_\fs < 0 \}$ the
basis of the negative-homogeneous sector of the model space. Since the cardinality 
of $\cF_F^-$ diverges as $\rho\searrow\rhocrit$, it is natural to consider
statistical properties observed when picking a tree uniformly at random in the
forest representing $\cF_F^-$. 

We thus consider the discrete probability space obtained by endowing $\cF_F^-$
with the uniform measure. We are interested in the distribution of various
random variables $Y:\cF_F^- \to \R$. Examples of such random variables are the
homogeneity $|\tau|_\fs$, the number of edges of the tree, its degree
distribution, its height and its diameter. 

\begin{rmk}
When stating results on the limit $\rho\searrow\rhocrit$, we will always
assume that the constant $\kappa>0$ defining $\alpha_0$ (cf.~\eqref{eq:alpha0})
is smaller than $\rho-\rhocrit$. This will simplify the expressions of various
limiting results, and is allowed since in practice we always
consider cases where $\rho>\rhocrit$.
\end{rmk}

\subsection{Tree Size Distribution}

The size of a bare tree can be measured by its number of edges $q$, which is
also the number of occurrences of the integration operator $\cI_\rho$ in
$\tau$. We will denote the corresponding random variable by an uppercase $Q$,
to avoid confusion with its values $q\in\{1,\dots,q^\star\}$. 

Proposition~\ref{prop:trees_Ng2} and the constraints on $p$ and $q$ imply that 
\begin{align}
\label{eq:lawQ0} 
\P\{ Q=q \} &= \frac{w^{(N)}_{q+1}}{c_F(N,d,\rho)}
&& \text{if $q \in N\N$ and $q\leqs q^\star$\;,} \\
\P\{ Q=q \} &\leqs \binom{q+N}{N} \frac{w^{(N)}_{q+N}}{c_F(N,d,\rho)}
&& \text{if $q \in N\N + r$ for $1\leqs r \leqs N-1$ 
and $q\leqs \lambda_r(\rho) q^\star$\;,} 
\nonumber
\end{align}
where $\lambda_r(\rho)$ is defined in~\eqref{eq:lambdar}.
The binomial coefficient in the second case bounds the number of ways of
pruning a regular tree of $N$ of its branches. The behaviour of the law of $Q$
as $\rho$ approaches $\rhocrit$ can be summarised as follows. Recall that
$\beta_N$ is defined in~\eqref{eq:betaN}.

\begin{prop}
\label{prop:lawQ}
There exists $\gamma=\gamma(N,d,\rho)>0$ satisfying
\begin{equation}
\label{eq:defgamma} 
 \lim_{\rho\searrow\rhocrit} \gamma(N,d,\rho) =
\beta_N d \biggl( 1-\frac1N \biggr)
\end{equation} 
such that
\begin{equation}
\label{eq:lawQ3} 
 \P\{Q \notin N\N\} \leqs \e^{-\gamma/(\rho-\rhocrit)}\;.
\end{equation}
Furthermore, $Q/q^\star$ satisfies the large-deviation estimate
\begin{equation}
\label{eq:lawQ2} 
 -\lim_{\rho\searrow\rhocrit} (\rho-\rhocrit) 
 \log \P \biggl\{ \frac{Q}{q^\star} \leqs x \biggr\} 
 = \beta_N d (1-x)
 \qquad \forall x\in [0,1]\;.
\end{equation}%
Finally, as $\rho\searrow\rhocrit$, one has
\begin{equation}
\label{eq:lawQ1} 
 \E\biggl( \frac{Q}{q^\star} \biggr) = 1 + \cO(\rho - \rhocrit)\;, 
 \qquad
 \Var\biggl( \frac{Q}{q^\star} \biggr) 
 = \cO\bigl((\rho - \rhocrit)^2\bigr)\;.
\end{equation}%
\end{prop}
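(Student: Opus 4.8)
The plan is to deduce all four statements from the asymptotics~\eqref{eq:wN_asymptotics} for $w^{(N)}_n$ together with the bounds~\eqref{eq:lawQ0} and the expression~\eqref{eq:pstar-qstar} for $q^\star$. The common engine is that, up to polynomially-growing prefactors, $\P\{Q=q\}$ behaves like $(\alpha_N^{-1})^q / c_F(N,d,\rho)$ for $q\in N\N$, and that by Theorem~\ref{thm:Nbigger2} the normalising constant $c_F(N,d,\rho)$ is itself comparable to $(\alpha_N^{-1})^{q^\star}(q^\star)^{3/2}\cdot(\text{poly})$, so that $\P\{Q=q\}$ is, roughly, $(\alpha_N^{-1})^{-(q^\star-q)}$ times a polynomial factor. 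Since $q^\star \asymp (\rho-\rhocrit)^{-1}$ diverges, this exponential decay in $q^\star - q$ is what drives everything. I would first record this heuristic precisely as a two-sided estimate
\begin{equation*}
 \P\{Q=q\} \asymp \frac{(q+1)^{3/2}}{(q^\star)^{3/2}}\,
 (\alpha_N^{-1})^{q-q^\star}
 \qquad \text{for }q\in N\N,\ q\leqs q^\star,
\end{equation*}
valid for $\kappa$ small (using Theorem~\ref{thm:Nbigger2} for the denominator), and the analogous upper bound with an extra factor $\binom{q+N}{N}$ for $q\notin N\N$.

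Next I would treat the statements in turn. For~\eqref{eq:lawQ3}: summing the $q\notin N\N$ bound in~\eqref{eq:lawQ0} over $1\leqs r\leqs N-1$ and over $q\leqs\lambda_r(\rho)q^\star$, the dominant term is at $q$ near $\lambda_r(\rho)q^\star$, giving a bound of order $\exp\{-\text{(something)}\cdot(1-\lambda_r(\rho))\,q^\star\,\log(\alpha_N^{-1})\}$ times a polynomial; since $\lambda_r(\rhocrit)\leqs 1/N + \cO(\kappa) <1$ by~\eqref{eq:lambdar}, taking the worst (largest) $\lambda_r$, i.e.\ $r=1$, and recalling $q^\star\sim \frac{(\rho+d)N}{(N+1)(\rho-\rhocrit)}$, the exponent becomes $-\gamma/(\rho-\rhocrit)$ with $\gamma\to \beta_N d(1-1/N)$; one reads off $\gamma(N,d,\rho)$ explicitly from the product of $q^\star$, $\log(\alpha_N^{-1})$ and $(1-\lambda_1(\rho))$ and checks the limit using~\eqref{eq:betaN}. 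For the large-deviation estimate~\eqref{eq:lawQ2}: write $\P\{Q/q^\star\leqs x\} = \sum_{q\leqs xq^\star} \P\{Q=q\}$; by the two-sided estimate above this sum is, up to polynomial factors, a geometric-type sum dominated by its largest admissible term $q\approx xq^\star$ (restricting to $q\in N\N$ for the lower bound suffices), so $-(\rho-\rhocrit)\log\P\{Q/q^\star\leqs x\} \to (1-x)\cdot\lim(\rho-\rhocrit)q^\star\cdot\log(\alpha_N^{-1}) = \beta_N d(1-x)$, again using $\lim(\rho-\rhocrit)q^\star = \frac{(\rho_c+d)N}{N+1} = \frac{2dN^2}{(N+1)^2}$ and~\eqref{eq:betaN}; one should note the polynomial prefactors and the $N\N$-vs-not-$N\N$ discrepancy wash out after multiplying by $(\rho-\rhocrit)$ and taking the limit. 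Finally~\eqref{eq:lawQ1}: since the law of $Q/q^\star$ concentrates exponentially fast near $1$ — precisely, $\P\{Q/q^\star \leqs 1-\varepsilon\}\leqs \e^{-c\varepsilon/(\rho-\rhocrit)}$ from~\eqref{eq:lawQ2} (or directly from the geometric estimate) — we have $\E(1-Q/q^\star) = \int_0^1 \P\{Q/q^\star\leqs 1-t\}\,\txtd t = \cO(\rho-\rhocrit)$ and similarly $\E((1-Q/q^\star)^2) = \cO((\rho-\rhocrit)^2)$, whence $\Var(Q/q^\star)\leqs \E((1-Q/q^\star)^2) = \cO((\rho-\rhocrit)^2)$.

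The main obstacle is bookkeeping the polynomial-in-$q^\star$ prefactors (the $(q+1)^{3/2}$ from Otter's asymptotics, the $(q^\star)^{3/2}$ and $(\rho-\rhocrit)^{3/2}$ from $c_F$, and the binomial $\binom{q+N}{N}$ in the pruned case) and verifying that none of them contributes to the exponential rates — they all disappear after taking $(\rho-\rhocrit)\log(\cdot)$ and letting $\rho\searrow\rhocrit$, but this needs to be checked carefully, especially near $q=0$ where Otter's asymptotic formula degenerates and one must instead use that $w^{(N)}_{q+1}$ is bounded by $(\alpha_N^{-1})^{q}$ times a constant uniformly. A secondary subtlety is that for the lower bounds in~\eqref{eq:lawQ2} one works only with $q\in N\N$ (where~\eqref{eq:lawQ0} is an equality) so that the $q\notin N\N$ terms, being negligible by~\eqref{eq:lawQ3}, do not spoil the estimate; for the upper bound one includes all $q$ and uses the pruned-tree bound, checking via~\eqref{eq:lambdar} that the extra terms are of strictly smaller exponential order.
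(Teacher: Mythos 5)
Your proposal follows the paper's proof essentially verbatim: for~\eqref{eq:lawQ3} both arguments use the constraint $q\leqs\lambda_1(\rho)q^\star$ for $q\notin N\N$ together with the Otter asymptotics~\eqref{eq:wN_asymptotics} and the $c_F$ asymptotics of Theorem~\ref{thm:Nbigger2}, extracting the rate after multiplying by $(\rho-\rhocrit)$; for~\eqref{eq:lawQ2} both decompose the probability, show the $q\in N\N$ sum is dominated by its largest term (a log-polynomial correction swallowed by the limit) and bound the $q\notin N\N$ contribution by the same mechanism; and for~\eqref{eq:lawQ1} both apply an integration-by-parts identity to the tail of $1-Q/q^\star$. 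One small slip: the displayed two-sided estimate should read $\P\{Q=q\}\asymp \frac{(q^\star)^{3/2}}{(q+1)^{3/2}}(\alpha_N^{-1})^{q-q^\star}$ (the polynomial ratio is inverted in your version, since $w^{(N)}_{q+1}$ carries a $(q+1)^{-3/2}$ while the $(\rho-\rhocrit)^{3/2}\asymp (q^\star)^{-3/2}$ sits in the denominator $c_F$), and the cancellation $\exp\{\beta_N d/(\rho-\rhocrit)\}\approx(\alpha_N^{-1})^{q^\star}$ is only exact in the limit, so $\gamma(N,d,\rho)$ is not literally the product you describe for finite $\rho$; but since you only need positivity and the limit~\eqref{eq:defgamma}, and the polynomial prefactors wash out after taking $(\rho-\rhocrit)\log(\cdot)$ as you observe, neither issue affects the argument.
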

\begin{proof}
The proof of~\eqref{eq:lawQ3} draws on the fact that if $q$ is not a
multiple of $N$, then it cannot exceed $q_{\max} = q_{\max}(\rho) =
\lambda_1(\rho)q^\star$ where $\lambda_1(\rho)$ is defined
in~\eqref{eq:lambdar}. For this, it suffices to use the very rough upper bound
\[
 \P\{Q \notin N\N\} \leqs \sum_{q=1}^{q_{\max}}
 \binom{q+N}{N} \frac{w^{(N)}_{q+N}}{c_F(N,d,\rho)}
 \leqs q_{\max} \frac{(q_{\max}+N)^N}{N!} 
 \frac{w^{(N)}_{q_{\max}+N}}{c_F(N,d,\rho)}\;.
\]
Indeed, it follows from Theorem~\ref{thm:Nbigger2} and the
asymptotics~\eqref{eq:wN_asymptotics} of Wedderburn--Etherington numbers that 
\begin{align*}
(\rho-\rhocrit) \log c_F(N,d,\rho) 
&= \beta_N d + \cO\bigl( (\rho-\rhocrit)\log(\rho-\rhocrit) \bigr)\;, \\
(\rho-\rhocrit) \log(w^{(N)}_{q_{\max}+N})
&\leqs (\rho-\rhocrit) q_{\max}(\rho) \log(\alpha_N^{-1}) 
 + \cO\bigl( (\rho-\rhocrit)\log(\rho-\rhocrit) \bigr)\;.
\end{align*}
Using the definitions of $\beta_N$ and $q_{\max}(\rho)$, this yields 
\[
 (\rho-\rhocrit) \log \P\{Q \notin N\N\} \leqs 
 - \biggl( \beta_N d + (\rho - d)\frac{N}{N+1} 
 \log(\alpha_N^{-1}) \biggr)
 + \cO\bigl( (\rho-\rhocrit)\log(\rho-\rhocrit) \bigr)\;,
\]
from which~\eqref{eq:defgamma} and~\eqref{eq:lawQ3} follow upon taking the
limit $\rho\searrow\rhocrit$.

To prove the large-deviation estimate~\eqref{eq:lawQ2}, we write 
\begin{equation}
\label{eq:ldp_proof} 
 \P \biggl\{ \frac{Q}{q^\star} \leqs x \biggr\} 
 = \sum_{n=1}^{\lfloor xq^\star/N \rfloor} \P(Q=nN) 
 + \P \bigl\{ Q \leqs xq^\star , Q\notin N\N \bigr\}\;.
\end{equation} 
We claim that the sum is dominated by its last term. To see this, we rewrite it
as 
\[
 \sum_{n=1}^{\lfloor xq^\star/N \rfloor} \P\{Q=nN\}
 = \frac{w^{(N)}_{\lfloor xq^\star/N \rfloor N+1}}{c_F(N,d,\rho)} S 
 \qquad \text{with} \qquad 
 S = \sum_{n=1}^{\lfloor xq^\star/N \rfloor}
 \frac{w^{(N)}_{nN+1}}{w^{(N)}_{\lfloor xq^\star/N \rfloor N+1}}\;.
\]
Since $1 \leqs S \leqs \lfloor xq^\star/N \rfloor$, we have $\log(S) =
\cO(\log(\rho-\rhocrit))$. Thus the sum in~\eqref{eq:ldp_proof} obeys the
claimed large-deviation bound, owing to the fact that 
\[
 (\rho-\rhocrit) \log(w^{(N)}_{\lfloor xq^\star/N \rfloor N+1})
 = \beta_N d x + \cO\bigl( (\rho-\rhocrit)\log(\rho-\rhocrit) \bigr)
\]
combined with the previously obtained asymptotics of $c_F$. As for the second
term on the right-hand side of~\eqref{eq:ldp_proof}, it can be bounded above in
the same way as $\P\{Q \notin N\N\}$, with $q_{\max}$ replaced by $xq^\star$. As
a result, it is not larger than the large-deviation bound obtained for the sum.

The moment estimates~\eqref{eq:lawQ1} then follow from the
integration-by-parts formula 
\[
 0 \leqs 
 \E \biggl[ \biggl( 1-\frac{Q}{q^\star} \biggr)^p \biggr] 
 = \int_0^1 p y^{p-1} \P  \biggl\{ 1-\frac{Q}{q^\star} \geqs y \biggr\} \txtd
y\;,
\]
applied for $p\in\{1,2\}$, and the fact that 
$\P\{Q\leqs xq^\star\} \leqs \e^{-\gamma'(1-x)/(\rho-\rhocrit)}$ for some
$\gamma'>0$.
\end{proof}

This result shows in particular that as $\rho\searrow\rhocrit$, the random
variable $Q/q^\star$ converges to $1$ in $L^2$. In fact, it is easy to extend
the proof to show that it converges to $1$ in any $L^p$ with $p\geqs1$. This is
due to the fact that when $\rho$ is near $\rhocrit$, the overwhelming majority
of trees of negative-homogeneous elements have the maximal size $q^\star$.

The random variable $P$, counting the number of occurrences of $\Xi$ in $\tau$
as well as the number of leaves of the bare and decorated trees, is determined
by $Q$ owing to the relation~\eqref{eq:pq}. Indeed, note that~\eqref{eq:pq}
can be written 
\begin{equation}
 P = 1 + \frac{N-1}{N}Q -
 \biggl\{-\frac{Q}{N} \biggr\}
\end{equation} 
where $\{\cdot\}$ denotes the fractional part. In particular, \eqref{eq:lawQ3}
implies 
\begin{equation}
 \P \biggl\{ P = 1 + \frac{N-1}{N} Q \biggr\} 
 \geqs 1 - \e^{-\gamma/(\rho-\rhocrit)}\;.
\end{equation} 
This entails similar concentration properties for $P$ as for $Q$. 

\subsection{Homogeneity Distribution}

The random variable $\Hs(\tau) = |\tau|_\fs$, giving the homogeneity of
$\tau\in\cF_F^-$ can be expressed in terms of $Q$ and the random variable $\Xs$
giving the homogeneity of the polynomial part, i.e., $\Xs(\tau) = |k|_\fs$ if
$\tau$ is of type $(p,q,k)$. Indeed, using the fact that $p^\star\alpha_0 +
q^\star\rho=0$ and $p^\star-1 = (N-1)q^\star/N$ (cf.~\figref{fig_D0_D}), one
obtains $\rho/(-\alpha_0) = (N-1)/N + 1/q^\star$.
Hence~\eqref{eq:tau_homogeneity} yields  
\begin{equation}
 \Hs = \alpha_0 P + \rho Q + \Xs
 = -\alpha_0
 \biggl( \frac{Q}{q^\star} - 1 + \biggl\{-\frac{Q}{N} \biggr\} \biggr)
 + \Xs\;.
\end{equation} 
Note that $\Hs$ takes values in
$[\alpha_0,0)$, where $-\alpha_0$ converges to
$Nd/(N+1)$ as $\rho\searrow\rhocrit$, while $\Xs$ takes its values in a
finite subset of $\N_0 + \rho\N_0$ (see the proof of Theorem~\ref{thm:hF}).

\begin{prop}
\label{prop:lawH}
As $\rho\searrow\rhocrit$, one has 
\begin{equation}
 \E(\Hs) = \cO(\rho-\rhocrit)\;, \qquad
 \Var(\Hs) = \cO\bigl((\rho - \rhocrit)^2\bigr)\;.
\end{equation} 
Furthermore, $\Hs$ satisfies the large-deviation estimate 
\begin{equation}
\label{eq:ldp_homogeneity} 
 -\lim_{\rho\searrow\rhocrit} (\rho-\rhocrit) \log \P\{\Hs\leqs h\} 
 = \frac{N+1}N \beta_N (-h) 
 \qquad
 \forall h \in [\alpha_0,0]\;.
\end{equation} 
\end{prop}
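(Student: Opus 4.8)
The plan is to work from the identity
\[
\Hs = \alpha_0 P + \rho Q + \Xs = -\alpha_0\Bigl(\tfrac{Q}{q^\star}-1+\bigl\{-\tfrac{Q}{N}\bigr\}\Bigr) + \Xs
\]
established just above, and to feed into it the concentration and large-deviation estimates for $Q$ proved in Proposition~\ref{prop:lawQ}. Throughout I use that $-\alpha_0 = \tfrac{\rho+d}{2}+\kappa$ is bounded on $(\rhocrit,2]$ and converges to $Nd/(N+1)$ as $\rho\searrow\rhocrit$ (since $\rhocrit+d = 2dN/(N+1)$), and that $\Xs\geqs0$ takes values in a finite subset of $\N_0+\rho\N_0$. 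The structural observation I would isolate first is that on $\cF_F^-$ one has the clean bound $|\Hs|\leqs -\alpha_0\bigl(1-\tfrac{Q}{q^\star}\bigr)$: indeed $\Hs<0$, and $\Hs$ equals $-\bigl(-\alpha_0(1-\tfrac{Q}{q^\star})\bigr)$ plus the two non-negative quantities $-\alpha_0\{-Q/N\}$ and $\Xs$, so the absolute value can only decrease.

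The moment estimates then follow immediately. Since $-\alpha_0$ is bounded, $|\E(\Hs)|\leqs -\alpha_0\,\E\bigl(1-\tfrac{Q}{q^\star}\bigr) = \cO(\rho-\rhocrit)$ directly from~\eqref{eq:lawQ1}; and $\Var(\Hs)\leqs\E(\Hs^2)\leqs\alpha_0^2\,\E\bigl[(1-\tfrac{Q}{q^\star})^2\bigr]$, where $\E[(1-\tfrac{Q}{q^\star})^2] = \Var(\tfrac{Q}{q^\star}) + (1-\E\tfrac{Q}{q^\star})^2 = \cO\bigl((\rho-\rhocrit)^2\bigr)$, again by~\eqref{eq:lawQ1}.

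The main work is the large-deviation statement~\eqref{eq:ldp_homogeneity}. I would fix $h\in(\alpha_0,0)$ — the case $h=0$ being trivial, as $\P\{\Hs\leqs0\}=1$ — and set $x = x(\rho) := 1+h/(-\alpha_0)\in(0,1)$, which tends to $x_\infty := 1-\tfrac{(N+1)(-h)}{Nd}$. For the upper bound, because $-\alpha_0\{-Q/N\}$ and $\Xs$ are non-negative, $\{\Hs\leqs h\}\subseteq\{-\alpha_0(\tfrac{Q}{q^\star}-1)\leqs h\} = \{\tfrac{Q}{q^\star}\leqs x\}$, so $\P\{\Hs\leqs h\}\leqs\P\{\tfrac{Q}{q^\star}\leqs x(\rho)\}$; sandwiching $x(\rho)$ between $x_\infty\pm\varepsilon$, applying the large-deviation principle~\eqref{eq:lawQ2} (whose rate $\beta_N d(1-x)$ is continuous in $x$) for the fixed values $x_\infty+\varepsilon$, and letting $\varepsilon\searrow0$ yields
\[
\liminf_{\rho\searrow\rhocrit}\bigl[-(\rho-\rhocrit)\log\P\{\Hs\leqs h\}\bigr]\geqs\beta_N d(1-x_\infty)\;.
\]
For the matching lower bound I would retain only those $\tau\in\cF_F^-$ whose bare tree is a regular tree of degree $N$ with $q=nN$ edges: by Proposition~\ref{prop:trees_Ng2} these have $\Xs=0$ and $\{-Q/N\}=0$, hence $\Hs=-\alpha_0\bigl(\tfrac{nN}{q^\star}-1\bigr)$, so $\Hs\leqs h$ exactly when $n\leqs xq^\star/N$; Lemma~\ref{lem:bare_decorated} guarantees each such tree is a genuine element, and counting them gives
\[
\P\{\Hs\leqs h\}\geqs\frac{1}{c_F(N,d,\rho)}\sum_{n=1}^{\lfloor xq^\star/N\rfloor}w^{(N)}_{nN+1}\geqs\frac{w^{(N)}_{\lfloor xq^\star/N\rfloor N+1}}{c_F(N,d,\rho)}\;.
\]
Using the asymptotics~\eqref{eq:wN_asymptotics}, the limit $(\rho-\rhocrit)q^\star\to 2dN^2/(N+1)^2$ read off from~\eqref{eq:pstar-qstar}, and $(\rho-\rhocrit)\log c_F(N,d,\rho)\to\beta_N d$ from Theorem~\ref{thm:Nbigger2} (resp.\ Theorem~\ref{thm:cF2} when $N=2$), one computes $(\rho-\rhocrit)\log\bigl[w^{(N)}_{\lfloor xq^\star/N\rfloor N+1}/c_F\bigr]\to x_\infty\beta_N d-\beta_N d$, so the $\limsup$ of $-(\rho-\rhocrit)\log\P\{\Hs\leqs h\}$ is at most $\beta_N d(1-x_\infty)$. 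Since $\beta_N=\tfrac{2N^2}{(N+1)^2}\log(\alpha_N^{-1})$ and $1-x_\infty=\tfrac{(N+1)(-h)}{Nd}$, one has $\beta_N d(1-x_\infty)=\tfrac{N+1}{N}\beta_N(-h)$, which is~\eqref{eq:ldp_homogeneity}; the endpoint $h=\alpha_0$ is covered by the same computation in degenerate form, since then $\P\{\Hs\leqs\alpha_0\}=1/c_F$ ($\Xi$ being the unique minimizer by Lemma~\ref{lem:maxneg}) and $\tfrac{N+1}{N}\beta_N(-\alpha_0)\to\beta_N d$.

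The step I expect to be the main obstacle is the bookkeeping in the lower bound: confirming that restricting to regular trees of degree $N$ with $q=nN$ does not lose the exponential rate, i.e.\ that the discarded trees (non-regular bare trees, trees with non-trivial polynomial decoration) and the terms $n<\lfloor xq^\star/N\rfloor$ of the sum are genuinely subdominant. On the upper-bound side this is free, since the single inclusion $\{\Hs\leqs h\}\subseteq\{\tfrac{Q}{q^\star}\leqs x\}$ needs no case analysis; on the lower-bound side it reduces to the elementary fact that $\sum_{n\leqs M}w^{(N)}_{nN+1}$ is comparable to its last term because of the geometric growth in~\eqref{eq:wN_asymptotics}, exactly as in the proof of Theorem~\ref{thm:cF2}. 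A secondary nuisance is that $\alpha_0$, hence $x$ and even the interval $[\alpha_0,0]$, depend on $\rho$; this is absorbed by the sandwiching argument above and the continuity of the rate function.
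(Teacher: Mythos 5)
Your proof is correct and rests on the same pillars as the paper's (Proposition~\ref{prop:lawQ}, Proposition~\ref{prop:trees_Ng2}, the asymptotics~\eqref{eq:wN_asymptotics} of $w^{(N)}_n$, and the asymptotic identity $(\rho-\rhocrit)q^\star\to 2dN^2/(N+1)^2$), but the packaging is somewhat cleaner than what appears in the published argument. The paper's proof first establishes that $\P\{\Xs>0\}$ is exponentially small, then derives the moment bounds from Proposition~\ref{prop:lawQ} without spelling out the intermediate step, and proves the LDP by splitting $\P\{\Hs\leqs h\}$ into three terms according to whether $Q\in N\N$ and $\Xs=0$, $Q\in N\N$ and $\Xs\geqs\rho$, or $Q\in N\N+r$. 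Your key technical device is the one-sided pointwise bound $\alpha_0\bigl(1-\tfrac{Q}{q^\star}\bigr)\leqs\Hs<0$ on $\cF_F^-$, which follows since both $-\alpha_0\{-Q/N\}$ and $\Xs$ are nonnegative; this collapses the moment estimates to a direct application of~\eqref{eq:lawQ1} with no need to first control $\P\{\Xs>0\}$ or $\P\{Q\notin N\N\}$ separately, and it turns the LDP upper bound into a single inclusion $\{\Hs\leqs h\}\subseteq\{Q/q^\star\leqs x(\rho)\}$ requiring no case analysis. The matching lower bound from regular trees with $q=nN$ is exactly the dominant first term of the paper's three-term decomposition, so the exponential rate agrees. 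Both routes must deal with the $\rho$-dependence of $\alpha_0$; your $\varepsilon$-sandwiching with continuity of the rate function handles it explicitly, which the paper leaves implicit. In short: same skeleton, but you isolate a small observation that makes the bookkeeping visibly shorter.
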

\begin{proof}
First note that the probability $\P\{\Xs>0\} = \P\{\Xs\geqs\rho\}$ that $\tau$
admits a nontrivial polynomial part is exponentially small, as a consequence
of~\eqref{eq:tau_homogeneity}. Indeed, having $|k|_\fs > 0$ while $|\tau|_\fs <
0$ requires $Q$ to be bounded away from $q^\star$, so that we can apply the
large-deviation estimate~\eqref{eq:lawQ2}. The moment bounds thus follow
directly from those in Proposition~\ref{prop:lawQ}, while the large-deviation
bound is obtained by decomposing 
\begin{align*}
 \P\{\Hs \leqs h \} 
 ={}& \P\biggl\{ \frac{Q}{q^\star} - 1 \leqs \frac{h}{(-\alpha_0)}, Q\in
N\N ,\Xs=0\biggr\}
+ \P\biggl\{ \frac{Q}{q^\star} - 1 \leqs \frac{h - \Xs}{(-\alpha_0)}, Q\in
N\N ,\Xs\geqs\rho\biggr\} \\
 &{}+ \sum_{r=1}^{N-1} \P\biggl\{ \frac{Q}{q^\star} - \frac rN \leqs
\frac{h-\Xs}{(-\alpha_0)}, Q\in N\N + r\biggr\}\;.
\end{align*}
The large-deviation estimate~\eqref{eq:lawQ2} shows that the first term on the
right-hand side satisfies the bound~\eqref{eq:ldp_homogeneity} (note that
$\P(Q=q,\Xs=0)$ and $\P(Q=q)$ obey the same large-deviation bound). The
contribution of the other terms vanishes in the limit $\rho\searrow\rhocrit$.
\end{proof}

This result shows in particular that the random variable $\Hs$ converges to
$0$ in probability and in $L^2$ as $\rho \searrow \rhocrit$, and that the
probability that $\Hs = h < 0$ decays like
$\e^{-\gamma(-h)/(\rho-\rhocrit)}$ where $\gamma \to (N+1)\beta_N/N$ as $\rho
\searrow \rhocrit$. 

\subsection{Normalised Degree Distribution}
\label{ssec:degree_distrib} 

We define the normalised degree distribution of the bare and decorated trees
representing $\tau$ by the $2(N+1)$ random variables 
\begin{align}
\nonumber
 D'_j(\tau) &= \frac{d'_j(\tau)}{d'_1(\tau) + \dots + d'_{N+1}(\tau)}
 = \frac{d'_j(\tau)}{Q(\tau)+1}\;, 
 & j&=1,\dots,N+1\;, \\
 D_j(\tau) &= \frac{d_j(\tau)}{d_1(\tau) + \dots + d_{N+1}(\tau)}
 = \frac{d_j(\tau)}{P(\tau)+Q(\tau)+1}\;, 
 & j&=1,\dots,N+1\;.
 \label{eq:def_Dj} 
\end{align} 
The following result shows in particular that $(D'_1,\dots,D'_{N+1})$ converges
in probability and in $L^2$ to the deterministic limit 
\begin{equation}
 \biggl( \frac{N-1}{N}, 0, \dots, 0, \frac{1}{N} \biggr)
\end{equation} 
as $\rho\searrow\rhocrit$, while $(D_1,\dots,D_{N+1})$ converges (in the
same sense)
to 
\begin{equation}
 \biggl( \frac{N-1}{2N-1}, \frac{N-1}{2N-1}, 0, \dots, 0, \frac{1}{2N-1}
\biggr)\;.
\end{equation}
The difference between these random variables and their limits is
of order $\rho - \rhocrit$. 

\begin{prop}
\label{prop:lawDj}
The degree distributions satisfy 
\begin{align}
\nonumber
D'_1 &= \frac{N-1}{N} + R'_1\;, 
&
D_j &= \frac{N-1}{2N-1} + R_j\;, 
\qquad j=1,2\;, \\
D'_{N+1} &= \frac1N + R'_{N+1}\;, 
&
D_{N+1} &= \frac{1}{2N-1} + R_{N+1}\;,
\label{eq:lawD1} 
\end{align}
where the $R'_j$ are random variables satisfying 
\begin{equation}
 \E(R'_j) = \cO\bigl(\rho-\rhocrit\bigr)\;, \qquad 
 \Var(R'_j) = \cO\bigl((\rho-\rhocrit)^2\bigr)\;. 
\label{eq:lawD2} 
\end{equation} 
There exist constants $c>0$ and $\gamma>0$ such that 
\begin{equation}
\label{eq:lawD3} 
 \P\bigl\{|R'_j| > x(\rho-\rhocrit)\bigr\} \leqs \exp\biggl\{
-\frac{\gamma(1-c/x)}{\rho-\rhocrit} \biggr\} 
\end{equation} 
for $x\geqs c$. The $R_j$ are random variables satisfying
analogous relations. Furthermore, 
\begin{equation}
\label{eq:lawD4} 
 \E(D_j') = \cO\bigl((\rho-\rhocrit)\e^{-\gamma/(\rho-\rhocrit)}\bigr)\;, 
 \qquad
 \Var(D_j') = \cO\bigl((\rho-\rhocrit)^2\e^{-\gamma/(\rho-\rhocrit)}\bigr)
\end{equation} 
for $j=2,\dots,N-1$, and similarly for $D_j$ when $j=3,\dots,N-1$. 
\end{prop}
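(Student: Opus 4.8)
The plan is to deduce everything from the combinatorial classification in Proposition~\ref{prop:trees_Ng2} (and Proposition~\ref{prop:trees_N2} when $N=2$), the bare/decorated correspondence of Lemma~\ref{lem:bare_decorated}, and the concentration and large-deviation properties of $Q$ from Proposition~\ref{prop:lawQ}. The organising principle is the dichotomy $Q\in N\N$ versus $Q\notin N\N$. On the event $\{Q\in N\N\}$, writing $n=Q/N$, the bare tree is a regular tree of degree $N$ with degree vector $(d'_1,\dots,d'_{N+1})=((N-1)n+1,0,\dots,0,1,n-1)$; using $P=1+\frac{N-1}{N}Q$ from~\eqref{eq:pq} and the fact that the decorated tree has $d_1=P$, $d_2=d'_2+P$, $d_j=d'_j$ for $j\geqs 3$, and $P+Q+1=(2N-1)n+2$ vertices, one computes directly that on this event $D'_j=0$ for $2\leqs j\leqs N-1$, $D_j=0$ for $3\leqs j\leqs N-1$, and each of $R'_1,R'_{N+1}$ (and $R_1,R_2,R_{N+1}$) equals a constant depending only on $N$ divided by $Q+1$ (resp.\ by $P+Q+1$) — for example $R'_1=\frac{1}{N(Q+1)}$ and $R'_{N+1}=-\frac{N+1}{N(Q+1)}$. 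Since $P>\frac{N-1}{N}Q$ gives $1/(P+Q+1)<\frac{N}{2N-1}\cdot\frac1Q$, in all cases the relevant error is $\cO(1/Q)$ on $\{Q\in N\N\}$; this is the source of the scale $\rho-\rhocrit$, because by Proposition~\ref{prop:lawQ} the size $Q$ concentrates near $q^\star\asymp(\rho-\rhocrit)^{-1}$.

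Next I would dispose of the event $\{Q\notin N\N\}$: there the bare tree is a regular tree of degree $N$ with at most $N-1$ edges pruned, so each $d'_j$, hence each $d_j$, differs from the regular value by at most $N-1$; thus all $D'_j,D_j\in[0,1]$, while $D'_j\leqs(N-1)/(Q+1)$ for $2\leqs j\leqs N-1$. Since $\P\{Q\notin N\N\}\leqs\e^{-\gamma_0/(\rho-\rhocrit)}$ by~\eqref{eq:lawQ3} with $\gamma_0=\gamma(N,d,\rho)\to\beta_Nd(1-\tfrac1N)$, this event contributes only exponentially small corrections.

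The remaining estimates then follow by combining the two cases. For~\eqref{eq:lawD2}: splitting $\E(R'_j)$ over the dichotomy, the $\{Q\in N\N\}$ part is $\cO(\E[1/(Q+1)])$, and $\E[1/(Q+1)]\leqs 2/q^\star+\P\{Q\leqs q^\star/2\}=\cO(\rho-\rhocrit)$ by the large-deviation estimate~\eqref{eq:lawQ2}, while the $\{Q\notin N\N\}$ part is $\cO(\P\{Q\notin N\N\})=\cO(\rho-\rhocrit)$; the variance uses $1/(Q+1)^2$ instead, and the $R_j$ are treated identically. For~\eqref{eq:lawD3}, on $\{Q\in N\N\}$ the event $\{|R'_j|>x(\rho-\rhocrit)\}$ forces $Q<(a_j/x)q^\star(1+o(1))$ for a constant $a_j$, which by~\eqref{eq:lawQ2} has probability at most $\exp\{-(\beta_Nd(1-a_j/x)+o(1))/(\rho-\rhocrit)\}$; the $\{Q\notin N\N\}$ part is absorbed since its rate $\gamma_0$ is bounded below, and taking $\gamma$ small enough (say $\gamma<\tfrac12\beta_Nd$) and $c$ large enough puts the bound in the stated form. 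Finally for~\eqref{eq:lawD4}, since $D'_j\equiv0$ on $\{Q\in N\N\}$ for $2\leqs j\leqs N-1$, one has $\E(D'_j)\leqs(N-1)\sum_{q\notin N\N}\P\{Q=q\}/(q+1)$; splitting this sum at $q=q^\star/2$, the part $q>q^\star/2$ is at most $(2/q^\star)\P\{Q\notin N\N\}=\cO((\rho-\rhocrit)\e^{-\gamma_0/(\rho-\rhocrit)})$, and the part $q\leqs q^\star/2$ is at most $\P\{Q\leqs q^\star/2\}$, whose large-deviation rate strictly exceeds $\gamma_0$ and is thus $o((\rho-\rhocrit)\e^{-\gamma/(\rho-\rhocrit)})$ for $\gamma<\gamma_0$; the variance bound is the same with $1/(Q+1)^2$, and similarly for the decorated degrees $D_j$.

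I expect the main difficulty to be not any single step but the bookkeeping: getting the decorated-tree degree vectors right in every case (the split between $N=2$ and $N>2$, the role of the root, the effect of pruning up to $N-1$ edges), together with the one genuinely delicate point in~\eqref{eq:lawD4}, namely that one cannot merely bound the contribution of $\{Q\notin N\N\}$ by its probability $\e^{-\gamma_0/(\rho-\rhocrit)}$ but must use that, conditionally on this event, $Q$ is still of order $q^\star$, so $1/(Q+1)$ is typically of order $\rho-\rhocrit$ — this is exactly what supplies the extra factor $\rho-\rhocrit$ in the bounds of~\eqref{eq:lawD4}.
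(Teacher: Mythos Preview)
Your approach is essentially the paper's: invoke the degree-vector classification of Proposition~\ref{prop:trees_Ng2}, the bare/decorated correspondence, and the concentration of $Q$ from Proposition~\ref{prop:lawQ}. The paper's own proof is terser, merely noting $D'_1=\frac{N-1}{N}+\cO(1/Q)$ and citing concentration of $Q$ near $q^\star$; you spell out the dichotomy $Q\in N\N$ versus $Q\notin N\N$ and the explicit remainders $R'_1=\frac{1}{N(Q+1)}$, etc., which is correct and slightly more informative.

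There is, however, a genuine slip in your argument for~\eqref{eq:lawD4}. You split $\sum_{q\notin N\N}\P\{Q=q\}/(q+1)$ at $q=q^\star/2$ and claim that the rate of $\P\{Q\leqs q^\star/2\}$ \emph{strictly exceeds} $\gamma_0$. This is false for $N\geqs3$: by~\eqref{eq:lawQ2} the rate of $\P\{Q\leqs q^\star/2\}$ is $\beta_Nd/2$, while $\gamma_0\to\beta_Nd(1-1/N)\geqs\tfrac23\beta_Nd>\beta_Nd/2$. Worse, since $Q\notin N\N$ forces $Q\leqs\lambda_1(\rho)q^\star$ with $\lambda_1(\rho)<1/N$ (cf.~\eqref{eq:lambdar}), the region $q>q^\star/2$ is empty for $N\geqs3$ and your decomposition collapses. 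The fix is to split at a threshold \emph{below} $\lambda_1 q^\star$, e.g.\ at $q^\star/(2N)$: then the upper part gives $\cO((\rho-\rhocrit))\P\{Q\notin N\N\}$ as you want, and the lower part has rate $\beta_Nd(1-\tfrac{1}{2N})>\beta_Nd(1-\tfrac1N)=\lim\gamma_0$, which is what you need. With this adjustment your argument goes through, and in fact you are right that this is the one place where the paper's proof is too brief --- it appeals only to~\eqref{eq:lawQ3}, which by itself yields $\E(D'_j)=\cO(\e^{-\gamma/(\rho-\rhocrit)})$ without the extra factor $\rho-\rhocrit$.
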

\begin{proof}
Consider the case of $D_1'$. It follows from the proof of
Proposition~\ref{prop:trees_Ng2} that 
\[
 D'_1 = \frac{N-1}{N} + \cO\biggl( \frac{1}{Q} \biggr)\;.
\]
To prove~\eqref{eq:lawD2}, it suffices to check that $1/Q$ has expectation of
order $1/q^\star$ and variance of order $1/(q^\star)^2$, which follows from the
fact that $Q$ is concentrated near $q^\star$. The tail estimate~\eqref{eq:lawD3}
follows from the large-deviation bound~\eqref{eq:lawQ2}. The expressions for the
$D_j$ are due to the fact that $D_1=D'_1$, $D_2=D'_2+P$ and $D_j=D'_j$ for
$j=3,\dots,N+1$, as a consequence of the one-to-one correspondence between bare
and decorated trees shown in Lemma~\ref{lem:bare_decorated}. The
bounds~\eqref{eq:lawD4} follow from the fact that the $D'_j$ and $D_j$ in
question vanish if $Q$ is a multiple of $N$ (see again the proof of
Proposition~\ref{prop:trees_Ng2}) together with~\eqref{eq:lawQ3}. 
\end{proof}

\subsection{Height and Diameter Distribution}

So far, we have only considered random variables which are either a function of
$Q$, or bounded in terms of $1/Q$. The distribution of a more general random
variable $Y$ can be expressed in terms of conditional expectations by 
\begin{equation}
\label{eq:expectationY} 
 \E(f(Y)) = \sum_{q=1}^{q^\star} \E(f(Y)\vert Q=q) \P\{Q=q\}
\end{equation} 
for any \lq\lq observable\rq\rq\ $f$ (e.g.\ $f(y)=y^p$). 
Examples of random variables with a nontrivial relation to $Q$ are the height
$H$ of a tree and its diameter $D$. The height of a rooted tree is defined as
the longest graph distance between the root and a leaf, while the diameter is
defined as the longest graph distance between leaves. Height and diameter of
nonhomeomorphic binary trees have been analysed
in~\cite{Broutin_Flajolet_2012}, yielding the following results (we consider
height and diameter of bare trees, but those of decorated trees are simply
obtained by adding $1$ or~$2$). 

\begin{prop}
\label{prop:lawHeight}
Assume $N=2$. Then
\begin{align}
\nonumber 
\E\bigl( \sqrt{\rho-\rhocrit}\, H \bigr)
&= \frac{4\sqrt{\pi d}}{3\lambda_2} + \cO(\rho-\rhocrit)\;,
&
\Var\bigl( (\rho-\rhocrit) H \bigr)
&= \frac{16\pi(\pi-3)d}{27\lambda_2^2} + \cO(\rho-\rhocrit)\;, \\
\E\bigl( \sqrt{\rho-\rhocrit}\, D \bigr)
&= \frac{16\sqrt{\pi d}}{9\lambda_2} + \cO(\rho-\rhocrit)\;,
&
\Var\bigl( (\rho-\rhocrit) D \bigr)
&= \frac{64(3-4\pi+\pi^2)d}{81\lambda_2^2} + \cO(\rho-\rhocrit)\;,
\end{align}
where $\lambda_2 \decapprox{1.1300337}$ is a constant related to the
generating series of the Wedderburn--Etherington numbers.
\end{prop}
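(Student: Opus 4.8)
The plan is to condition on the tree size $Q$, reduce the problem to the known height and diameter asymptotics of uniformly random non-plane binary trees due to Broutin and Flajolet~\cite{Broutin_Flajolet_2012}, and then average against the law of $Q$ using its strong concentration near $q^\star$ established in Proposition~\ref{prop:lawQ}. First I would record the distributional reduction: by Proposition~\ref{prop:trees_N2}, for $N=2$ the bare tree attached to a negative-homogeneous element of type $(p,q,0)$ with $q$ even is a binary tree on $q+1$ vertices, and by the one-to-one correspondence of Lemma~\ref{lem:bare_decorated} distinct such binary trees index distinct elements of $\cF_F$; hence, conditionally on $\{Q=q\}$ with $q$ even and trivial polynomial part, the bare tree is uniform among all non-plane binary trees on $q+1$ vertices. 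The elements with $q$ odd or with a nontrivial polynomial part are negligible: by~\eqref{eq:lawQ3} (and the argument of Proposition~\ref{prop:lawH}) their total mass is $\cO(\e^{-\gamma/(\rho-\rhocrit)})$, while their height and diameter are at most $q^\star=\cO((\rho-\rhocrit)^{-1})$, so after rescaling by $\sqrt{\rho-\rhocrit}$ their contribution to the moments is super-exponentially small, and likewise for $D$.

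Next I would insert the Broutin--Flajolet estimates: for a uniform non-plane binary tree on $\nu$ vertices the first two moments of the height and of the diameter have explicit large-$\nu$ asymptotics, the means proportional to $\sqrt\nu$ and the variances proportional to $\nu$, with all constants expressed through $\lambda_2$, the width constant of the Wedderburn--Etherington generating series of~\cite{Otter1948}, the characteristic factors $\pi-3$ and $(\pi-1)(\pi-3)=3-4\pi+\pi^2$ and the ratio $4/3$ between diameter and height being the usual signatures of the underlying theta-type limit law. Substituting $\nu=q+1$ into the conditioning identity~\eqref{eq:expectationY} and invoking Proposition~\ref{prop:lawQ} — namely $\E(1-Q/q^\star)=\cO(\rho-\rhocrit)$, $\Var(Q/q^\star)=\cO((\rho-\rhocrit)^2)$ together with the exponentially small lower tail~\eqref{eq:lawQ2} of $Q/q^\star$ — lets me replace the average over $q$ by its value at $q=q^\star$ up to relative error $\cO(\rho-\rhocrit)$. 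Combined with the expansion $(\rho-\rhocrit)q^\star=\tfrac{N(\rhocrit+d)}{N+1}+\cO(\rho-\rhocrit)=\tfrac{8d}{9}+\cO(\rho-\rhocrit)$ for $N=2$, obtained from~\eqref{eq:pstar-qstar}, this produces the four constants in the statement.

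I expect the reduction itself to be routine; the real work is in making the error terms fit. The Broutin--Flajolet expansions carry sub-leading corrections, and one has to verify that these survive the $\sqrt{\rho-\rhocrit}$-rescaling and the averaging against the law of $Q$ within the stated precision. The decisive quantitative input here is that $Q$ does not merely concentrate on an interval $[(1-\varepsilon)q^\star,q^\star]$ but in fact lies within $\cO(1)$ of $q^\star$ with overwhelming probability — a consequence of the geometric growth rate $\alpha_2^{-1}$ of the Wedderburn--Etherington numbers — so that $\E\bigl(\sqrt{(\rho-\rhocrit)(Q+1)}\bigr)=\sqrt{(\rho-\rhocrit)q^\star}\,\bigl(1+\cO(\rho-\rhocrit)\bigr)$, and analogously at the level of second moments. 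A secondary subtlety is that the diameter is not a monotone functional of the depth profile of the tree, so one genuinely needs the joint analysis of height and diameter carried out in~\cite{Broutin_Flajolet_2012} rather than a soft comparison argument based on the height alone.
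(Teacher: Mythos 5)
Your proposal is correct and follows essentially the same route as the paper: condition on $Q$, use the Broutin--Flajolet height and diameter moment asymptotics for uniform non-plane binary trees (Theorems~3 and~8 of~\cite{Broutin_Flajolet_2012}) with $n=Q/2$, and average via the conditioning identity~\eqref{eq:expectationY} together with the concentration of $Q$ near $q^\star$ from Proposition~\ref{prop:lawQ}. The paper's own proof is terser (it just records the integration-by-parts computation giving $\E(Q^{1/2})=(q^\star)^{1/2}[1+\cO(\rho-\rhocrit)]$), whereas you make explicit two points it leaves implicit: (i) that Proposition~\ref{prop:trees_N2} and Lemma~\ref{lem:bare_decorated} jointly force the conditional law of the bare tree given $\{Q=q,\ q\ \text{even},\ \Xs=0\}$ to be uniform over binary trees on $q+1$ vertices, which is the hypothesis under which the Broutin--Flajolet estimates apply; and (ii) that the odd-$q$ and nontrivial-polynomial contributions are negligible because their total mass is $\cO(\e^{-\gamma/(\rho-\rhocrit)})$ while $H,D\leqs q^\star$. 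Both additions are genuine ingredients of a complete proof, so your write-up is, if anything, slightly more careful than the paper's.
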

\begin{proof}
Using the integration-by-parts formula
\[
 \E\biggl[ \biggl( \frac{Q}{q^\star} \biggl)^{1/2} \biggr] 
 = 1 - \int_0^1 \frac{1}{2(1-t)^{1/2}} 
 \P \biggl( \frac{Q}{q^\star} \leqs 1-t \biggr) \txtd t \;, 
\]
one checks that $\E(Q^{1/2}) = (q^\star)^{1/2} [1+\cO(\rho-\rhocrit)]$.
The result then follows from the moment estimates on $H$
in~\cite[Theorem~3]{Broutin_Flajolet_2012} and on $D$
in~\cite[Theorem~8]{Broutin_Flajolet_2012} (with $n=Q/2$), combined
with~\eqref{eq:expectationY} applied to $f(y)=y$ and $f(y)=y^2$. 
\end{proof}

This result shows that $H$ and $D$ are likely to be much smaller than $Q$, since
they are typically of order $1/\sqrt{\rho-\rhocrit}$ while $Q$ has order
$1/(\rho-\rhocrit)$. Their standard deviation, however, is of the same order as
their expectation, showing that they are much less concentrated than the other
random variables considered so far. 

\begin{rmk}
If furthermore the conditional
distribution satisfies a large-deviation principle 
\begin{equation}
 -\lim_{\rho\searrow\rhocrit} (\rho-\rhocrit) \log \P\biggl\{ Y\in A
\bigg\vert
\frac{Q}{q^\star} = x\biggr\} = \inf_{y\in A} I_Y(y|x)
\end{equation} 
(say for any interval $A\subset\R$), then~\eqref{eq:lawQ2} yields 
\begin{equation}
\label{eq:ldp} 
 -\lim_{\rho\searrow\rhocrit} (\rho-\rhocrit) \log \P\{ Y\in A \} 
 = \inf_{y\in A} \inf_{x\in[0,1]} \biggl[I_Y(y|x) + \beta_Nd(1-x)\biggr]\;.
\end{equation} 
The rate function $I_Y(y|x)$ can be interpreted as a relative negative entropy. 

For instance, in the case $N=2$, \cite[Theorem~5]{Broutin_Flajolet_2012}
provides a large-deviation estimate for $H$ with rate function $I_H$ which, when
applied to $Y=(\rho-\rhocrit)H$, yields 
\begin{equation}
 I_Y(y|x) \leqs \frac{4dx}{9} I_H \biggl( \frac{9y}{4dx} \biggr)\;.
\end{equation} 
Unfortunately, there is no explicit expression for $I_H$, which is expressed
in terms of the solution of a functional equation. It is to be expected,
however, that $x\mapsto I_Y(y|x)$ is decreasing, so that the infimum over $x$
in~\eqref{eq:ldp} is reached for $x=1$. In other words, $H$ is largest whenever
$Q$ is largest. 

A more subtle interplay between scales can occur when the noise is not purely
additive. Then there is no longer a one-to-one correspondence between bare and
decorated trees, so that the relative entropy of one with respect to the other
plays an important role.  
\end{rmk}

\section{Computation}
\label{sec:compute}

In this section, we provide a symbolic-algebra based computational analysis for
the model space $\cT_F$. We again exploit the viewpoint of model space elements
as directed rooted trees as considered in~\cite{Hairer4}. Recall that edges have
two types, $\Xi$ or $\cI_\rho$, corresponding to noise and integral kernel
respectively. Vertices also come in two types, terminal vertices with label $L$
marking a leaf with directed edge $\Xi$ pointed at the leaf and polynomial
vertices with label $X^k$ for a multiindex $k$.

\begin{figure}[htbp]
\centering \includegraphics[width=0.4\textwidth]{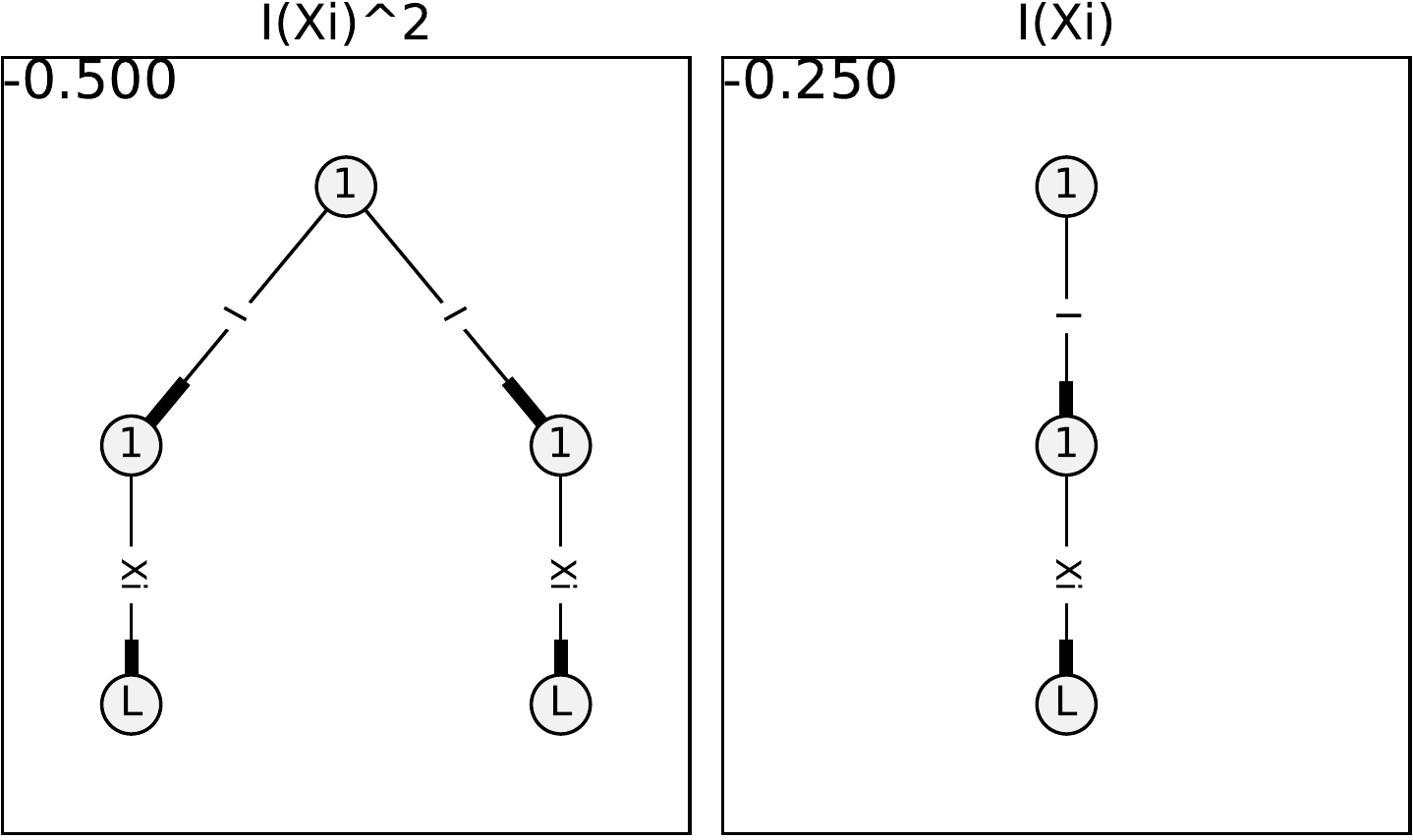}
\caption{\label{fig:01}Negative homogeneity elements in the model space
$\cT_F$ for a quadratic polynomial $f$ ($N=2$), in dimension $d=2$ and with
exponent $\rho=1.5$ for the SPDE~\eqref{eq:AC} with space-time white noise
forcing $\xi$. The top left corner for each element gives the homogeneity up to
a multiple of an arbitrarily small factor, i.e., $|\cI_\rho(\Xi)^2|_\fs =
-\frac12+\cO(\kappa)$ and $|\cI_\rho(\Xi)|_\fs = -\frac14 +\cO(\kappa)$ for
$0<\kappa\ll1$.}
\end{figure}

For an example consider Figure~\ref{fig:01}, which lists the elements of
negative homogeneity for the case $N=2$, $\rho=1.5$ and space-time white noise. 
For additive noise, these elements illustrate that the key building blocks have 
to be mixtures of integration against the kernel and taking powers. In the 
notation, we always suppress the trivial element ${\bf 1}$ but the element will 
be marked on the vertices of the trees to emphasize the product structure at
the vertices. 

The computation has been carried out in the package ReSSy (Regularity Structures
Symbolic Computation Package), which has recently been developed. The main
algorithm to compute the regularity structure elements provided the inputs
$N,d,\rho$ is the iterative procedure~\eqref{eq:recurse1}--\eqref{eq:recurse3}.
The algorithm has two main input parameters given by:
\begin{itemize}
 \item $\texttt{maxh}$ = maximum homogeneity of elements to keep after one
iteration,
 \item $\texttt{iter}$ = total number of fixed point iteration steps performed.
\end{itemize}
Of course, we cannot set $\texttt{maxh}=+\infty$ or to be arbitrarily large
since this would include all elements of the polynomial regularity structure.
For small to medium size regularity structures, it is possible to calculate all
elements of negative homogeneity but for very large structures we have to take
into account the fact that we may miss some elements if both algorithmic
parameters are not large enough. Here we decided to report the parameters for
each larger computation to guarantee for the reproducibility of results.

\subsection{Explicit Examples -- Negative Homogeneity Elements}
\label{ssec:negel}

\begin{figure}[tb]
\centering
\includegraphics[width=0.55\textwidth]{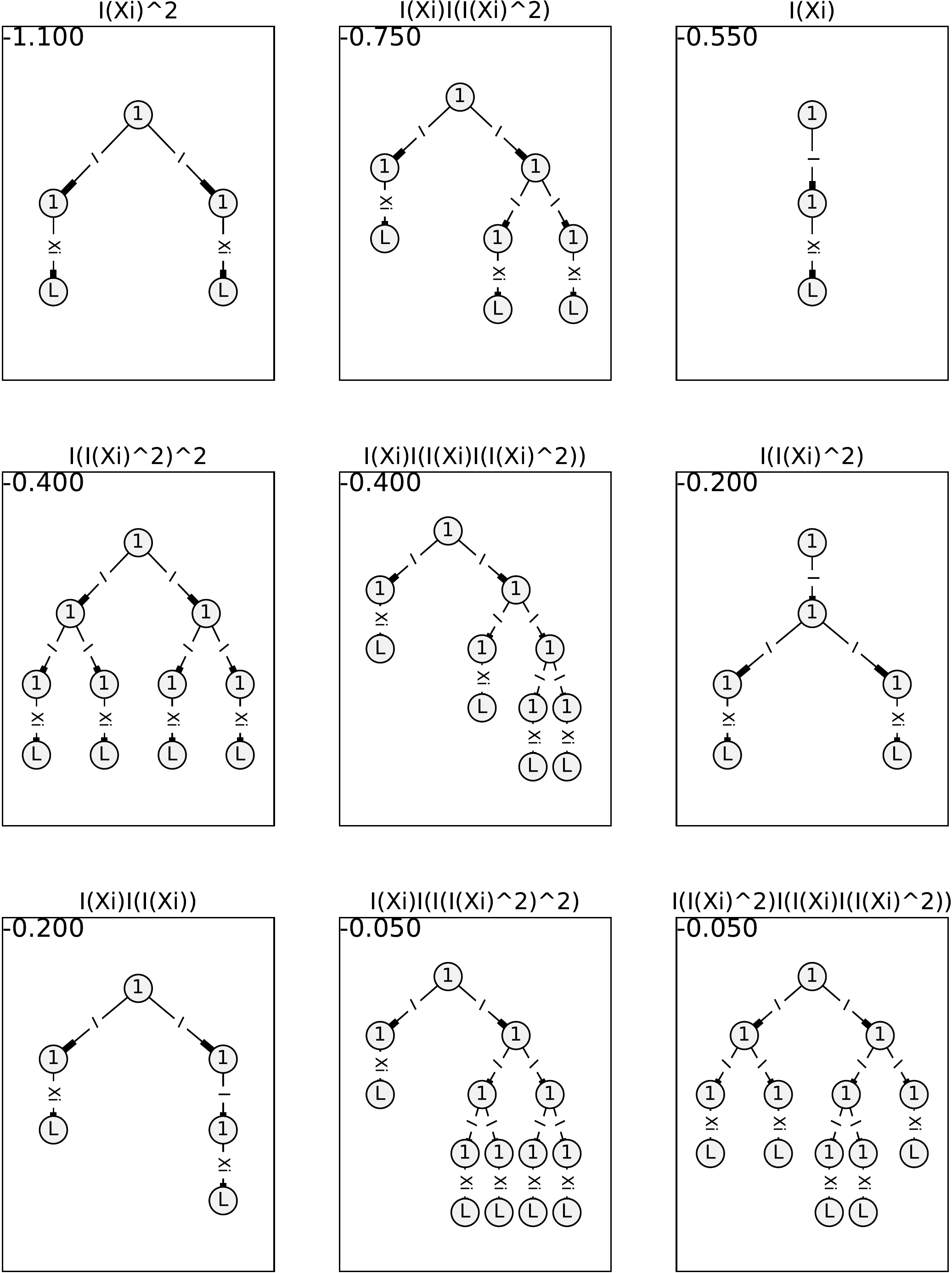}
\caption{\label{fig:02}List of all negative-homogeneous elements in the
basis $\cF_F$ of the model space $\cT_F$ for a quadratic polynomial $f$
($N=2$), for dimension $d=2$ and with exponent $\rho=0.9$ for the
SPDE~\eqref{eq:AC} with space-time white noise forcing $\xi$. The corresponding
pairs $(p,q)$ are shown in \figref{fig_N=2}.}
\end{figure}

In this section, we present a few more computational examples for space-time
white noise. As shown in Section~\ref{ssec:modelspace}, it makes no practical
sense to list all possible regularity structures. However, to build intuition,
it is important to explicitly compute with key examples. In this regard, we must
restrict the parameter space. Our main restriction is to consider 
\benn
d\in \{2,3\}\;,\qquad N\in\{2,3\}\;.
\eenn
The choice of dimension is motivated by the \lq\lq classical physical\rq\rq\
dimensions. $N>1$ is chosen since we are really interested in nonlinear
problems. $N\leq 3$ is motivated by classical normal form theory for ordinary
differential equations~\cite{Kuznetsov}, modulation/amplitude equations for
partial differential equations~\cite{Hoyle} and stochastic partial differential
equations~\cite{Bloemker}. In these contexts, one obtains normal forms near
criticality with $N\leq 3$ for the simplest codimension one bifurcations and
pattern-forming mechanisms. For the fractional Laplacian parameter $\rho$, it
makes sense to restrict to a regime 
\benn
\rho\in(0,2]\bigcap \left\{\rho>\rho_p> \rhocrit = d\frac{N-1}{N+1}\right\}
\eenn
where $\rho_p$ is a fixed number chosen as close as possible to the local
subcriticality boundary but also fixed so that the computations are still
possible in practice; indeed, we know from Section~\ref{ssec:modelspace} that
the model space grows very rapidly as we approach $\rhocrit$.

\begin{figure}[tb]
\centering \includegraphics[width=0.9\textwidth]{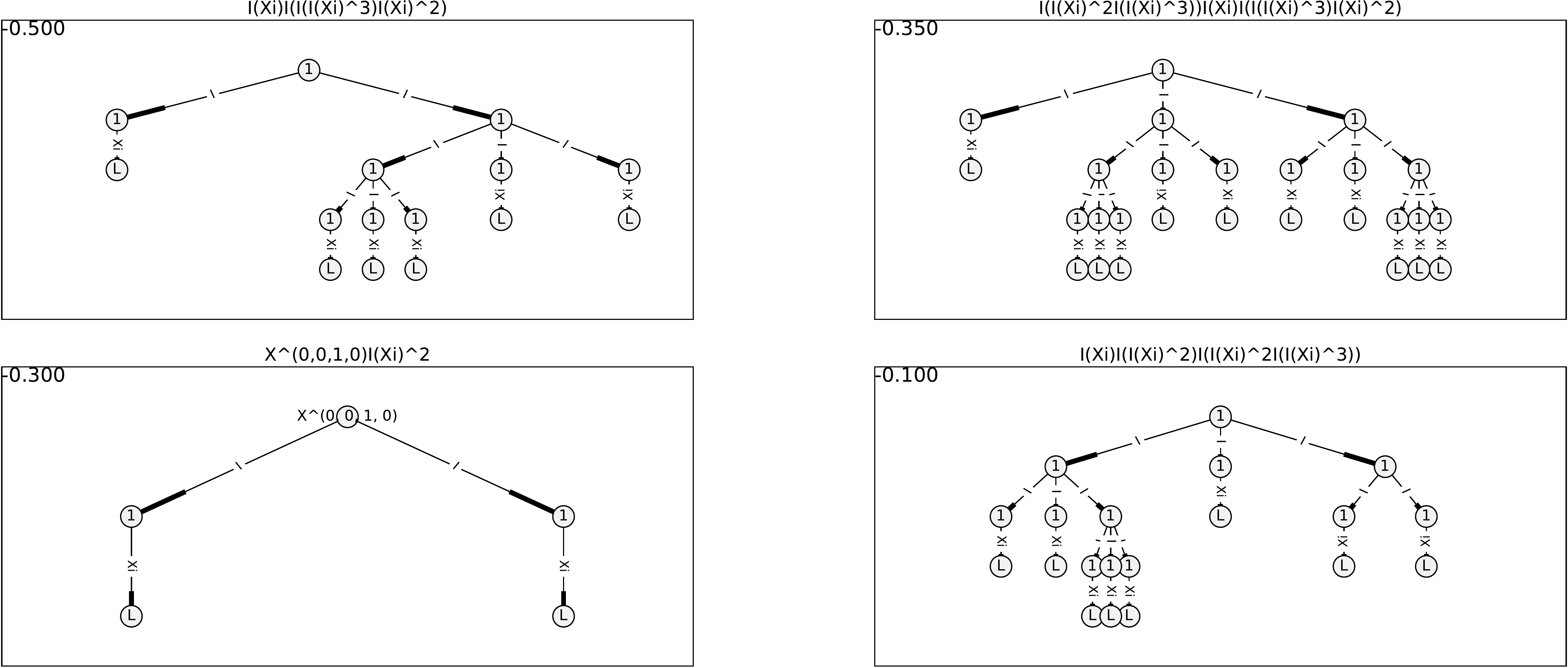}
\caption{\label{fig:03}Examples of some negative homogeneity elements in the
model space $\cT_F$ for a cubic polynomial $f$ ($N=3$), for dimension
$d=3$ and with exponent $\rho=1.7$ for the SPDE~\eqref{eq:AC} with space-time
white noise forcing $\xi$.}
\end{figure}

Figure~\ref{fig:02} shows an example for $d=2=N$ and $\rho=0.9$ listing all
elements of negative homogeneity. From the theory it is clear that the maximum
degree of a vertex must be $N+1 = 3$. One easily checks that the pairs $(p,q)$
counting the number of $\Xi$ and $\cI_\rho$ are compatible with the theory in
Section~\ref{ssec:homogeneities}, cf.~\figref{fig_N=2}. Furthermore, in
accordance with Proposition~\ref{prop:trees_N2}, all bare trees obtained by
pruning the edges of type $\Xi$ are either binary trees, or can be turned into
binary trees by adding one edge. 

Figure~\ref{fig:03}
shows just four elements for a more complicated case with $d=3=N$ and
$\rho=1.7$, where also nontrivial polynomial exponents appear; in this case,
there are at least 42 negative homogeneity elements in $\cF_F$
($\texttt{iter}=4$, $\texttt{maxh}=2.0$). In accordance with
Proposition~\ref{prop:trees_Ng2}, all pruned trees are either ternary trees, or
ternary trees pruned by one edge.

\subsection{Degree Distributions}
\label{ssec:dd}

The large dimension of the relevant sector of the model space may suggest
that it is too complex to understand in detail. However, as already shown in
Section~\ref{sec:statreg}, certain statistical properties become relevant as
$\rho\searrow\rhocrit$.

\begin{figure}[htbp]
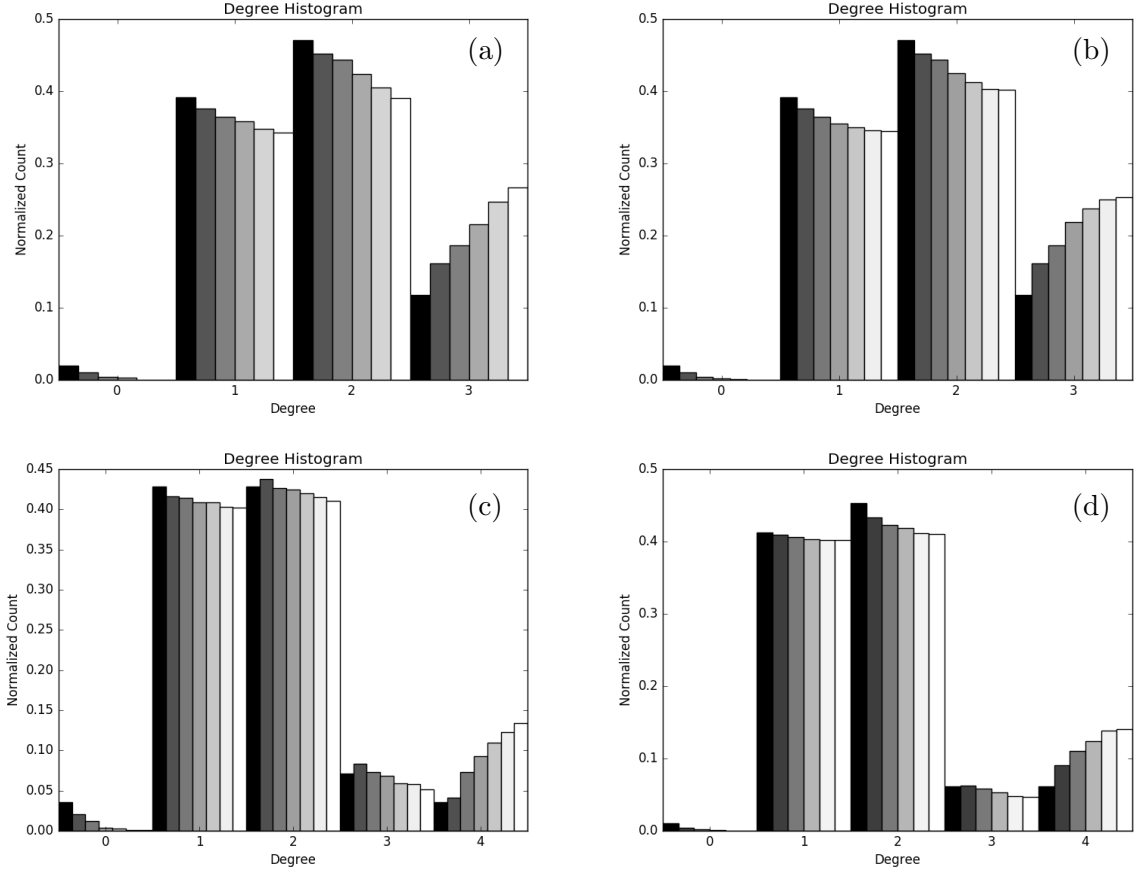

\centering
\begin{overpic}[width=1\linewidth]{./fig04}
\put(40,68){(a)}
\put(90,68){(b)}
\put(40,30){(c)}
\put(90,30){(d)}
\end{overpic}
\caption{\label{fig:04}Computation of the degree distribution for the parameter
values $N\in\{2,3\}$, $d\in\{2,3\}$ for negative homogeneous elements.
Each histogram is coded according to a greyscale, i.e., black corresponds to the
largest $\rho$-value used and $\rho=\rho_p$ corresponds to white. The degree
distribution is visualized as a histogram normalized to the total number of
elements of the regularity structure; note that we include the single trivial
element ${\bf 1}$ with degree zero as it provides an indication of the absolute
numbers in the normalized degree histogram. (a) Case $N=2=d$ (where
$\rhocrit=\frac23$), computed for $\texttt{maxh}=0.7$ and $\texttt{iter}=4$. The
exponents for the fractional Laplacian are $\rho\in
\{1.0,0.9,0.85,0.8,0.75,0.7\}$. (b) Case $N=2$, $d=3$, ($\rhocrit=1$), computed
for $\texttt{maxh}=1.0$, $\texttt{iter}=4$ and $\rho\in
\{1.4,1.3,1.25,1.2,1.15,1.1,1.08\}$. (c) Case $N=3$, $d=2$, ($\rhocrit=1$),
computed for $\texttt{maxh}=1.0$, $\texttt{iter}=3$ and $\rho\in
\{1.4,1.3,1.25,1.2,1.15,1.1,1.08\}$. (d) Case $N=3=d$, ($\rhocrit=\frac32$),
computed for $\texttt{maxh}=1.0$, $\texttt{iter}=3$ and $\rho\in
\{1.8,1.75,1.7,1.65,$ $1.6,1.59\}$.}
\end{figure}

We start by considering the degree distribution $D$. Let $\cR$ denote the set of
rooted trees forming the set $\cF_F^-$ of negative-homogeneous basis
elements. Note that we can also view $\cR$ as a single graph. Then we define
\be
D(j;\rho)=\frac{\text{number of vertices of degree $j$ in $\cR$} }{\text{
total number of vertices in $\cR$}}=\frac{m_j}{m}\;. 
\ee
Note that this is slightly different from the $D_j$ defined in
Section~\ref{ssec:degree_distrib}, but both quantities are strongly related and
converge to the same deterministic limit as $\rho\searrow\rhocrit$. In
particular, it follows from~\eqref{eq:def_Dj} that $D(j;\rho) =
\E[D_j(P+Q+1)]/\E[P+Q+1]$.

Figure~\ref{fig:04} shows the degree distribution for the classical Allen--Cahn
case with $N,d\in\{2,3\}$ for different values of $\rho$ approaching the
subcriticality boundary at $d(N-1)/(N+1)=\rhocrit$; obviously we always consider
this limit as a limit from above. Although $m\ra +\I$ as $\rho\ra \rhocrit$, we
see that the degree distribution $D(k;\rho)$ has relatively stable features. The
trivial graph of the unit element ${\bf 1}$ explains the results at degree
zero. The results are compatible with Proposition~\ref{prop:lawDj}, which
implies that the degrees should converge to $(\frac13,\frac13,\frac13)$ for
$N=2$, and to $(\frac25,\frac25,0,\frac15)$ for $N=3$.

\begin{figure}[htbp]
\centering \includegraphics[width=0.8\textwidth]{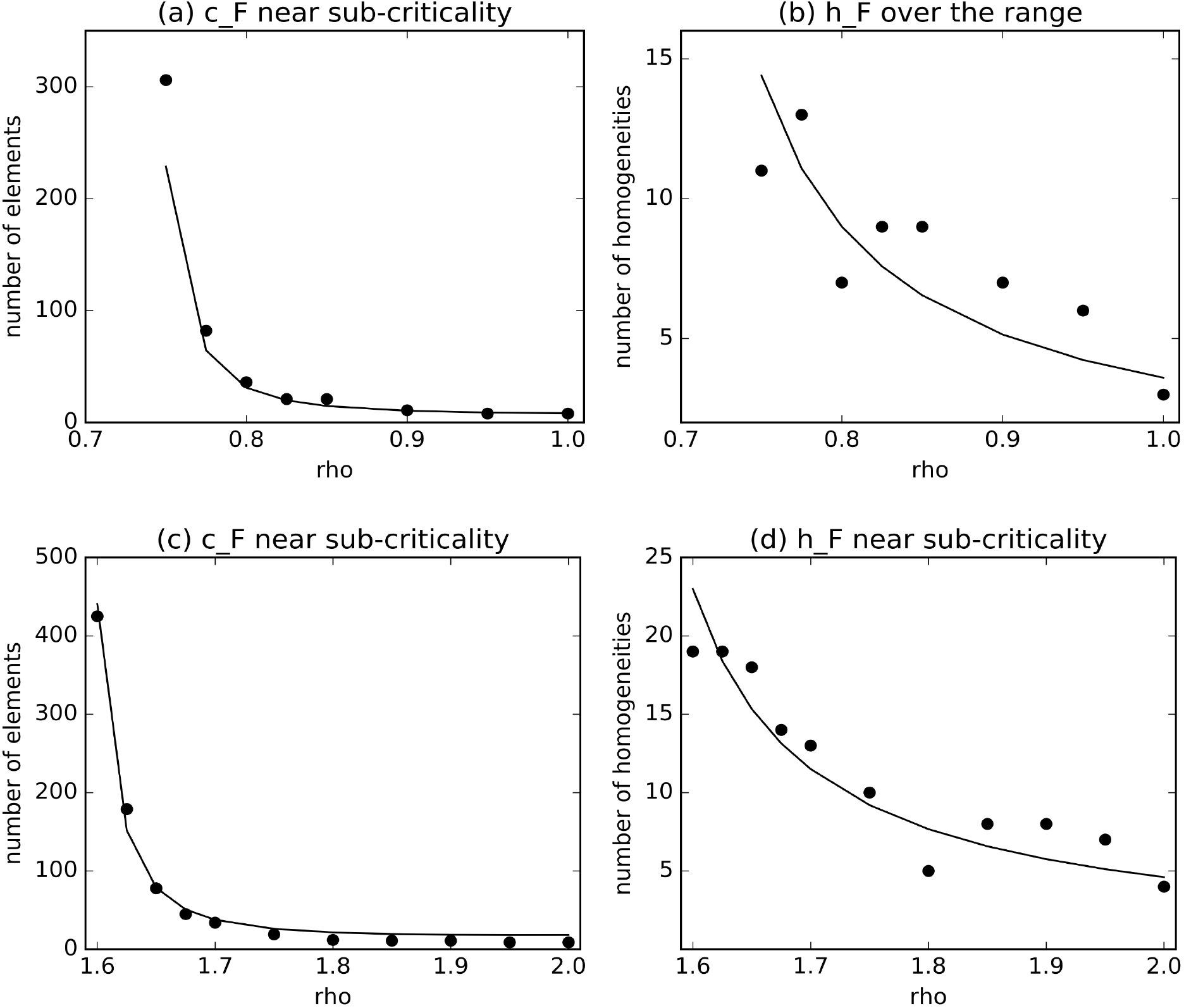}
\caption{\label{fig:07}Computation to analyse the functions $h_F$ and $c_F$.
Dots mark computational results while connected curves are theoretical scaling
laws plotted using a least-square fit to the theoretical scaling laws. (a)--(b)
Result for $N=2=d$ computed with $\texttt{maxh}=0.7$, $\texttt{iter}=5$.
(c)--(d) Result for $N=3=d$ computed for $\texttt{maxh}=0.8$,
$\texttt{iter}=4$.}
\end{figure}

Furthermore, we are also interested in the growth of the functions $c_F$ and
$h_F$. The results of this computation are shown in Figure~\ref{fig:07} for the
cases $N=2=d$ and $N=3=d$. The results are compatible with the asymptotic
growth in $(\rho-\rhocrit)^{-1}$ of $h_F$ obtained in Theorem~\ref{thm:hF} and
with the exponential growth of $c_F$ obtained in Theorems~\ref{thm:cF2}
and~\ref{thm:Nbigger2}.

We briefly summarize some computational observations. Obviously, the results for
the function $h_F$ are a lot easier to obtain computationally as we only need
a large enough sub-sample of the entire model space to count homogeneities while
for $c_F$, we have to count \emph{all} elements (up to homogeneities
equal to a multiple of $-\alpha_0$). Counting all elements
requires higher values for $\texttt{iter}$ and $\texttt{maxh}$, which can
substantially increase the computation time. The key computational bottleneck,
where the computation is slow, arises in the decision step whether during, or
after, the construction of an element, this element is already contained in
the model space from a previous iteration step. This step is unavoidable and
necessitates a comparison to previously computed elements. The larger the
individual graphs become, the more computationally intensive the computation may
be. Note that we already reduce the computation time significantly by doing
comparison by exclusion, e.g., first comparing the homogeneity, then comparing
the number of edges and nodes, etc., until we finally have to check for complete
graph isomorphism. Checking for graph isomorphism many times is extremely
expensive so it should be avoided as much as possible.

\subsection{Average Graph Properties}
\label{ssec:avggraph}

Although the degree distribution helps already to understand the approach
towards the subcriticality boundary, it is also very interesting to consider
other averaged graph properties.

\begin{figure}[htbp]
\centering \includegraphics[width=0.65\textwidth]{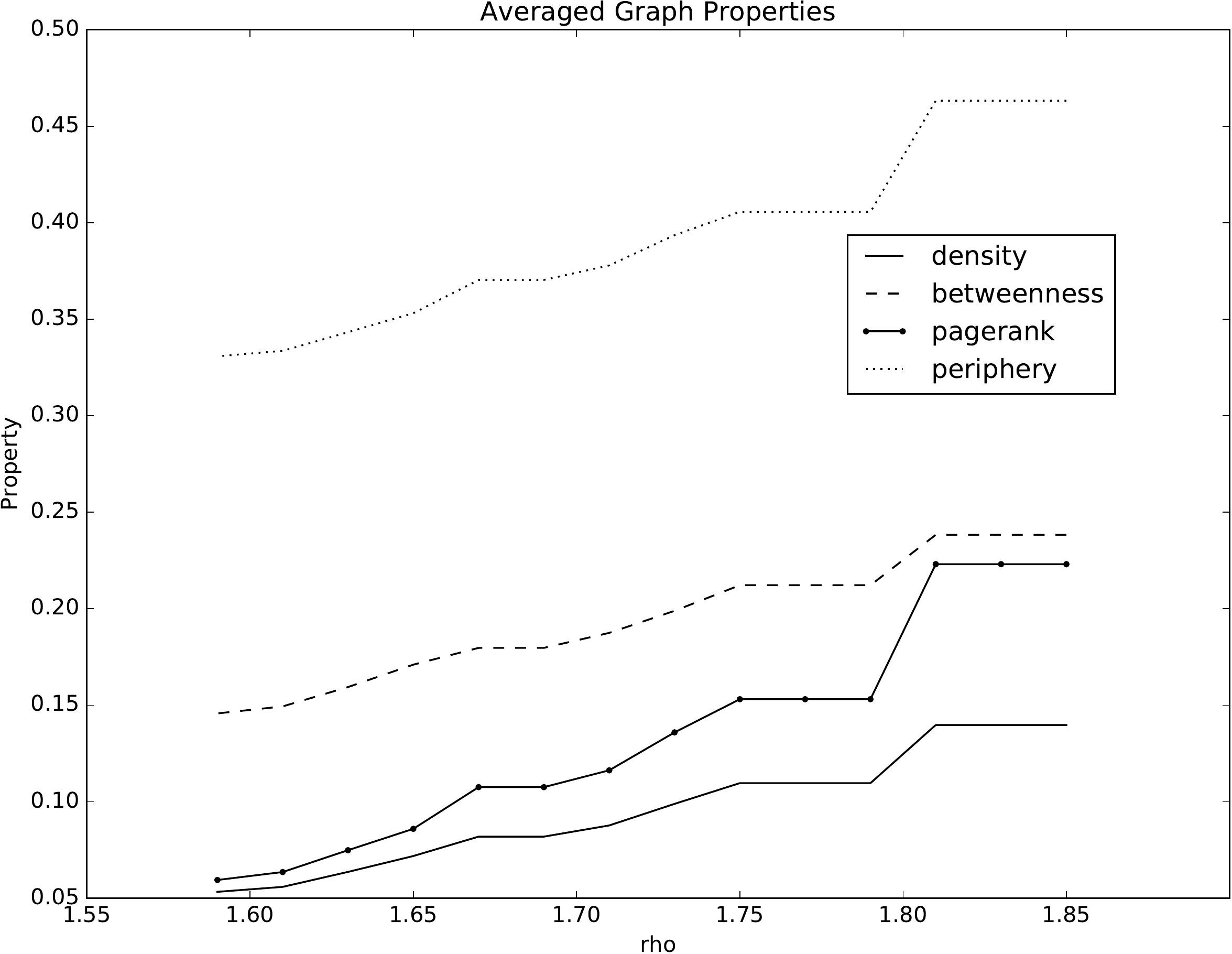}
\caption{\label{fig:05}Computation of different averaged graph properties
(vertical axis) for the regularity structure of the case $d=3=N$ for
different values of $\rho$ (horizontal axis); the computation has been carried
out for $\texttt{maxh}=1.5$ and $\texttt{iter}=3$. The properties are defined in
the text but one can already see that density $M_d$ and pagerank $M_{r}$ are
very close to zero at the subcriticality boundary; periphery $M_p$ and
betweenness $M_b$ are decreasing but are more likely to stabilize at the
subcriticality boundary $\rho=\frac32$ to finite nonzero values.} 
\end{figure}

As before, we let $\cR$ denote the set of rooted trees spanning the
negative-homogeneous sector $\cF_F^-$ of the model space and let $|\cR|=c_F$
denote the number of trees in $\cR$. Then we consider the following properties 
of $\cR$:
  
\begin{itemize} \item \textit{Density}: For an element
$R\in\cR$, let $n_R$ be the number of vertices in $R$ and $m_R$ be the number of
edges in $R$. Then the graph density averaged over $\cR$ is defined as
\be
M_d:=\frac{1}{|\cR|}\sum_{R\in\cR} \frac{m_R}{n_R(n_R-1)}\;.
\ee 
Since each $R$ is a tree, we have $n_R=m_R+1$ and thus $M_d$ is just the 
average of $1/n_R$, which we know to be of order $\rho-\rhocrit$. Hence, we
expect it to decay to zero as $\rho\ra \rho_c$.

\item \textit{Betweenness}: Let $\textnormal{path}(v_1,v_2)$ denote the number
of shortest paths between two vertices $v_1$ and $v_2$ and let
$\textnormal{path}(v_1,v_2|v)$ denote the number of shortest paths that also
pass through $v$; let $V_R$ denote the set of vertices of $R$. The betweenness
(or betweenness centrality) averaged over $\cR$ is defined as
\be
M_b:=\frac{1}{|\cR|}\sum_{R\in\cR}\frac{1}{n_R}\sum_{v\in R}
\sum_{v_1,v_2\in
V_R}\frac{\textnormal{path}(v_1,v_2|v)}{\textnormal{path}(v_1,v_2)}\;.
\ee 
Since each $R$ is a tree, $\textnormal{path}(v_1,v_2)=1$. However, betweenness
centrality for trees is still a property not fully
understood~\cite{FishKushwahaTuran} so it is of interest to just calculate it
here.

 \item \textit{PageRank}: Let $\textnormal{pagerank}(v)$ denote the pagerank of
a node computed according to~\cite{PageBrinMotwaniWinograd}, which measures the
importance of a vertex in a graph. Then the averaged PageRank is defined as
\be
M_r:= \frac{1}{|\cR|}\sum_{R\in\cR}\frac{1}{n_R}\sum_{v\in R}
\textnormal{pagerank}(v)\;.
\ee

 \item \textit{Periphery}: For $R\in \cR$ let $|\textnormal{ecc}_R|$ be the
number of vertices in $R$ with eccentricity equal to the diameter of $R$; recall
that the eccentricity of a vertex $v$ is the maximum distance from $v$ to all
other vertices and the diameter of a graph is the maximum eccentricity over all
nodes. Then the averaged periphery measure is defined as:
\be
M_p:= \frac{1}{|\cR|}\sum_{R\in\cR} |\textnormal{ecc}_R|\;.
\ee  
\end{itemize}

Note that for $M_b$ and $M_r$, we view each rooted tree as an undirected graph,
while the computations are for directed graphs for $M_d$ and $M_p$. The graph
properties are essentially coarse-grained summary statistics of the set of
rooted trees $\cR$ and represent different characteristics. Figure~\ref{fig:05}
shows a computation for the benchmark case fixing $d=3=N$ and leaving $\rho$ to
vary. The density $M_d$ decreases as expected. The averaged betweenness $M_b$
also decreases as $\rho$ decreases but seems to stabilize to a finite value,
i.e., there is a typical shortest path scale developing. The PageRank $M_r$ was
designed to measure the importance/connectedness of vertices and it also becomes
very small as $\rho\ra \frac32$. This indicates that although each rooted tree
is quite structured, it still grows individually in such a way to produce only a
few special or significant nodes so that the effect of $|\cR|$ increasing
eventually dominates. Quite interestingly, the averaged periphery measure $M_p$
also seems to have a well-defined finite value near subcriticality. $M_p$
essentially measures how many nodes are in the periphery of the trees and this
indicates that the growth of the rooted trees does follow a pattern still adding
a lot of smaller trees at the ends rather than maximizing connectedness.

\subsection{Homogeneity Distribution}
\label{ssec:homdist}

\begin{figure}[tb]
\centering \includegraphics[width=1\textwidth]{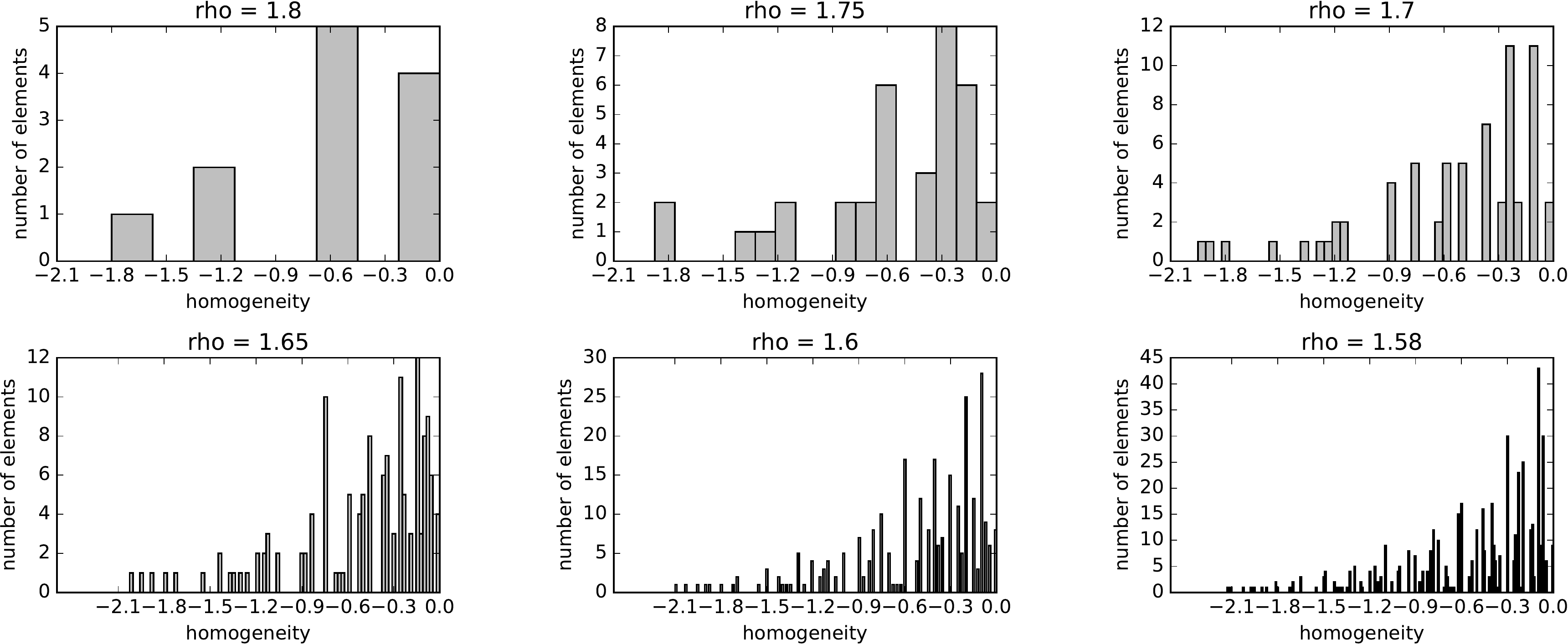}
\caption{\label{fig:06}Computation of homogeneity distributions for the
regularity structure $d=3=N$ for
different values of $\rho\in\{1.8, 1.75, 1.7, 1.65, 1.6, 1.58\}$, where each
histogram shows the number of elements in the regularity structure sorted
according to negative homogeneity but discarding arbitrarily small factors in
$|\cdot|_\fs=\cdot+\cO(\kappa)$, i.e., dropping the terms of order $\cO(\kappa)$
as $\kappa>0$ is arbitrarily small. The computation has been carried out for
$\texttt{maxh}=1.5$ and $\texttt{iter}=3$.}
\label{fig:homo_distrib} 
\end{figure} 

Another important measure for rooted trees in $\cF_F^-$ is their homogeneity,
i.e., we consider $\Hs(R) ={}|R|_\fs$ for $R\in\cR$ by just viewing the
rooted tree
as a symbol.

Figure~\ref{fig:06} again shows results for our benchmark case $d=3=N$ for
different values of $\rho$. Each histogram counts the number of elements of
a given homogeneity and each bin in the histogram is just one homogeneity
level. Note that we could have normalized the histograms by $|\cR|$ to obtain a
probability distribution, but the absolute counts are also informative and we
have scaled the vertical axis so that one already sees the normalized versions.
The results are consistent with Proposition~\ref{prop:lawH}, and in particular
with the large-deviation estimate~\eqref{eq:ldp_homogeneity} showing that the
probability of the homogeneity having a negative value $h$ behaves like
$\e^{-\kappa(-h)/(\rho-\rhocrit)}$. Another observation from Figure~\ref{fig:06}
is that upon decreasing $\rho$, new homogeneities appear in a very regular
fashion, at least reminiscent of the construction of functions with fractal
graphs. Indeed, if $\rho$ is rational, then only rational negative homogeneities
may appear by the iterative construction. However, the negative homogeneities
really do seem to fill out an entire domain of $\rho$. 


\bibliographystyle{plain}
\bibliography{BKfracAC}


\vfill
\goodbreak

\bigskip\noindent
{\small 
Nils Berglund \\ 
Universit\'e d'Orl\'eans, Laboratoire {\sc Mapmo}, 
{\sc CNRS, UMR 7349} \\
F\'ed\'eration Denis Poisson, FR 2964 \\
B\^atiment de Math\'ematiques, B.P. 6759\\
45067~Orl\'eans Cedex 2, France \\
{\it E-mail address: }{\tt nils.berglund@univ-orleans.fr}
}

\bigskip\noindent
{\small 
Christian Kuehn \\
Technical University of Munich (TUM) \\
Faculty of Mathematics \\
Boltzmannstr. 3 \\
85748 Garching bei M\"unchen, Germany \\
{\it E-mail address: }{\tt ckuehn@ma.tum.de}
}

\end{document}